\newtheorem{theorem}{Theorem}[section]
\newtheorem{lemma}[theorem]{Lemma}
\newtheorem{definition}[theorem]{Definition}
\newtheorem{remark}[theorem]{Remark}
\numberwithin{equation}{section}
\newcommand{\Rot}{\operatorname{\mathbf{curl}}}
\newcommand{\Div}{\operatorname{\mathrm{div}}}
\newcommand{\rot}{\operatorname{\mathrm{curl}}}
\newcommand{\loc}{\mathrm{loc}}
\renewcommand{\Im}{\operatorname{Im}}
\renewcommand{\Re}{\operatorname{Re}}
\newcommand  {\C}{{\mathbb C}}
\newcommand  {\Q}{{\mathbb Q}} 
\newcommand  {\R}{{\mathbb R}}
\newcommand {\Id}{\mathrm {I}}
\renewcommand{\SS}{\boldsymbol{\mathsf S}}
\newcommand  {\TT}{\boldsymbol{\mathsf T}}
\newcommand  {\LL}{\boldsymbol{\mathsf L}}
\newcommand  {\HH}{\boldsymbol{\mathsf H}}
\newcommand  {\EE}{\boldsymbol{\mathsf E}}
\newcommand  {\D}{\mathsf D}
\newcommand  {\nn}{\boldsymbol{\mathsf n}}
\newcommand  {\uu}{\boldsymbol{\mathsf u}}
\newcommand  {\jj}{\mathop{\boldsymbol{\mathsf j}}\nolimits}
\newcommand  {\mm}{\boldsymbol{\mathsf m}}
\newcommand  {\vv}{\boldsymbol{\mathsf v}}
\newenvironment{proof}{\begin{trivlist}
                       \item[]{\sc Proof. }}{\hfill $\blacksquare$
                     \end{trivlist}} 
\begin{document}
\title{On the Kleinman-Martin integral equation method for electromagnetic scattering by a dielectric body}
\author{
  Martin Costabel
  \thanks{IRMAR, Institut Math\'ematique, Universit\'e de Rennes 1, 35042
    Rennes, France, martin.costabel@univ-rennes1.fr}
  \and
  Fr\'ed\'erique Le Lou\"er
  \thanks{IRMAR, Institut Math\'ematique, Universit\'e de Rennes 1, 35042
    Rennes}
  }
\date{}
\maketitle

\begin{abstract}
 The interface problem describing the scattering of time-harmonic electromagnetic waves by a dielectric body is often formulated as a pair of coupled boundary integral equations for the electric and magnetic current densities on the interface $\Gamma$.  In this paper, following an idea developed by Kleinman and Martin \cite{KlMa} for acoustic scattering problems, we consider methods for solving the dielectric scattering problem using a single integral equation over $\Gamma$ for a single unknown density. One knows that such boundary integral formulations of the Maxwell equations are not uniquely solvable when the exterior wave number is an eigenvalue of an associated interior Maxwell boundary value problem. We obtain four different families of integral equations for which we can show that by choosing some parameters in an appropriate way, they become uniquely solvable for all real frequencies. We analyze the well-posedness of the integral equations in the space of finite energy on smooth and non-smooth boundaries. 
\end{abstract}

\textbf{Keywords : } scattering problems, Maxwell equations, boundary integral equations, Helmholtz decomposition.

\section{Introduction} 

We consider the scattering of time-harmonic electromagnetic waves in $\R^3$ by a bounded Lipschitz obstacle. We assume that the dielectric permittivity and the magnetic permeability take constant, in general different, values in the interior and in the exterior of the domain. This problem is described by the system of Maxwell's equations, valid in the sense of distributions in $\R^3$, which implies two transmission conditions expressing the continuity of the tangential components of the fields across the interface. The transmission problem is completed by the Silver-M\"uller radiation condition at infinity (see \cite{Monk} and \cite{Ned}).

 It is well known that this problem can be reduced in several different ways to systems of two boundary integral equations for two unknown tangential vector fields on the interface. Such formulations are analyzed in Harrington's book \cite{Harrington} and  in Martin and  Ola's comprehensive paper \cite{MartinOla}. Some pairs of boundary integral equations, such as M\"uller's \cite{Muller}, are uniquely solvable for all real values of the exterior wave number and  others, such as the so-called electric-field formulations \cite{MartinOla} are not, although the underlying Maxwell interface problem is always uniquely solvable under standard assumptions on the material coefficients.
 
 More recent research works in the scientific and engineering community show that there are computational advantages to solve dielectric scattering  problems via a single integral equation for a single unknown, rather than a system of two equations of two unknowns. For two-dimensional dielectric scattering problems, one can find various formulations and numerical results in \cite{Pomp,Seydou,SwatekCiric}. In \cite{Marx} Marx  develops  single source  integral formulations for three-dimensional homogeneous dielectric objects using an ansatz on the exterior electric field and  in \cite{Yeung} Yeung presents electric-field (EFIE) and magnetic-field (MFIE) integral equations for a single unknown based on an ansatz on the interior scattered field.  Computational results in \cite{Yeung} show higher convergence speed for the MFIE and the EFIE than for the pairs of boundary integral equations. However, both of these single integral formulations suffer from spurious non-unique solvability due to interior resonances. 

 In this paper, we study methods for solving the transmission problem using a single boundary integral equation for a single unknown tangential vector field on the interface by eliminating irregular frequencies. We follow ideas of \cite{KlMa} where Kleinman and  Martin considered the analogous question for the acoustic interface scattering problem. The method consists of representing the solution in one domain by some combination of a single layer potential and a double layer potential, and inserting this representation into the transmission conditions and the Calder\'on relations of the other domain. Several different integral equations of the first kind or of the second kind, containing two arbitrary parameters, can be obtained in this way, and in the scalar case, the parameters can be chosen in such a way that no spurious real frequencies are introduced.
Following the same procedure in the electromagnetic case, one encounters two main difficulties:

 The first problem is that some boundary integral operators that are compact in the scalar case are no longer compact, and therefore arguments based on the theory of Fredholm integral equations of the second kind have to be refined in order to show well-posedness of the corresponding integral equations.

 The second problem comes from a lack of ellipticity. The spurious frequencies are associated with the spectrum of a certain interior boundary value problem of the third kind, and whereas in the scalar case this is an elliptic boundary value problem whose spectrum can be moved off the real line by the right choice of parameters, in the Maxwell case this boundary value problem is not elliptic, in general. Thus an additional idea is needed to avoid real irregular frequencies. Mautz presents in \cite{Mautz} an alternative that leads to an  associated interior problem with an impedance boundary condition, which ensures the uniqueness of the solution. However Mautz's equation is not adapted to our point of view of variational methods and energy spaces.  Since the Kleinman-Martin method has similarities to the combined field integral equation method, we use a regularizer introduced by Steinbach and Windisch in \cite{Wind} in the context of combined field integral equations for the time-harmonic Maxwell equations. This regularizer is a positive definite boundary integral operator with a similar structure as the operator of the electrical field integral equation, but it is not a compact operator like those used in \cite{Colton} and \cite{BuHi} for regularizing the exterior electromagnetic scattering problem. Its introduction changes the boundary condition in the associated interior boundary value problem from a non-elliptic local impedance-like condition to a non-local, but elliptic, boundary condition.  

 This work contains results from the thesis \cite{FLL} where this integral formulation of the transmission problem is used to study the shape derivatives of the solution of the dielectric scattering problem, in the context of a problem of optimizing the shape of a dielectric lens in order to obtain a prescribed radiation pattern. 

 In Section~\ref{TrPot} we recall some results about traces and potentials for Maxwell's equations in Sobolev spaces. We use the notation of \cite{BuHiPeSc} and \cite{BuHi} and quote some important properties of the boundary integral operators that constitute the Calder\'on projector for Maxwell's equations.
 
Sections~\ref{IE1} and \ref{IE2} contain the details of the method for solving the transmission problem using single-source boundary integral equations. In Section~\ref{IE1}, we start from a layer representation for the exterior field whereas in Section~\ref{IE2}, we use a layer representation for the interior field. In either case, we derive two boundary integral equations of the second kind and we show uniqueness of their solutions under suitable conditions on an associated interior boundary value problem. Moreover, we show that the integral operators in each integral equation are Fredholm of index zero. We also construct the solution of the transmission problem using the solution of any of the four integral equations. We finally show how to choose the free parameters so that the associated interior boundary value problem is uniquely solvable, and as a consequence, we can construct an integral representation of the solution which yields uniquely solvable boundary integral equations for all real frequencies. 

For smooth domains, we base the analysis of the integral operators on the technique of Helmholtz decomposition, which represents a tangential vector field by two scalar field and each integral operator acting on tangential fields by a two-by-two matrix of scalar operators. Since these operators then act between standard Sobolev spaces instead of the complicated mixed-order energy space, it is easy to check for compactness or ellipticity. Using this technique, we find rather general sufficient conditions on the physical parameters to ensure unique solvability of the integral equations.
If the boundary is only Lipschitz, we show that under more restrictive conditions one still has strong ellipticity of the integral operators. This conditions include the physically relevant case of positive permeabilities, permittivities, and frequencies.

 
\section{The dielectric scattering problem}

Let $\Omega$ denote a bounded domain in $\R^{3}$  and let $\Omega^c$ denote the exterior domain $\R^3\backslash\overline{\Omega}$. 
In this paper, we will assume that the boundary $\Gamma$ of $\Omega$ is a Lipschitz continuous and simply connected closed surface. Let $\nn$ denote the outer unit normal vector on the boundary $\Gamma$.

In $\Omega$ (resp. $\Omega^c$) the electric permittivity $\epsilon_{i}$ (resp.
$\epsilon_{e}$) and the magnetic permeability $\mu_{i}$ (resp. $\mu_{e}$) are positive constants. The frequency $\omega$ is the same in $\Omega$ and in
$\Omega^c$. The interior wave number $\kappa_{i}$ and the exterior wave   number $\kappa_{e}$ are complex constants of non negative imaginary part.

\textbf{Notation: }
For a domain $G\subset\R^3$ we denote by $H^s(G)$ the usual $L^2$-based Sobolev space of order $s\in\R$, and by $H^s_{\loc}(\overline G)$ the space of functions whose restrictions to any bounded subdomain $B$ of $G$ belong to $H^s(B)$, with the convention $H^0\equiv L^2$. Spaces of vector functions will be denoted by boldface letters, thus 
$$
 \HH^s(G)=(H^s(G))^3\,.
$$ 
If $\D$ is a differential operator, we write:
\begin{eqnarray*}
\HH(\D,\Omega)& = &\{ u \in \LL^2(\Omega) : \D u \in \LL^2(\Omega)\}\\
\HH_{\loc}(\D,\overline{\Omega^c})&=& \{ u \in \LL_{\loc}^2(\overline{\Omega^c}) : \D u \in
\LL_{\loc}^2(\overline{\Omega^c}) \}\end{eqnarray*}
The space $\HH(\D, \Omega)$ is endowed with the natural graph norm. This defines in particular the Hilbert spaces $\HH(\Rot,\Omega)$ and $\HH(\Rot\Rot,\Omega)$.

\bigskip

The time-harmonic dielectric scattering problem is formulated as follows.

\medskip
 
\textbf{The dielectric scattering problem :} \\
Given an incident field 
$\EE^{inc}\in\HH_{\loc}(\Rot,\R^3)$ that satisfies 
$\Rot\Rot \EE^{inc} - \kappa_{e}^2\EE^{inc} =0$
in a neighborhood of $\overline{\Omega}$,
we seek  two fields $\EE^{i}\in \HH(\Rot,\Omega)$ and $\EE^{s}\in
\HH_{\loc}(\Rot,\overline{\Omega^c})$ satisfying the time-harmonic Maxwell equations
\begin{eqnarray}
\label{(1.2a)}
  \Rot\Rot \EE^{i} - \kappa_{i}^2\EE^{i}& = 0&\text{ in }\Omega,
\\
\label{(1.2b)} 
  \Rot\Rot \EE^{s} - \kappa_{e}^2\EE^{s} &= 0&\text{ in }\Omega^c,\end{eqnarray}
the two transmission conditions, 
\begin{eqnarray}{}
\label{T1}&\,\,\;\;\nn\times \EE^{i}=\nn\times( \EE^{s}+\EE^{inc})&\qquad\text{ on }\Gamma
\\
\label{T2} &\mu_{i}^{-1}(\nn\times\Rot \EE^{i}) = \mu_{e}^{-1}\nn\times\Rot(\EE^{s}+\EE^{inc})&\qquad\text{ on }\Gamma\end{eqnarray} 
and the Silver-M\"uller radiation condition:
\begin{equation}\label{T3}\lim_{|x|\rightarrow+\infty}|x|\left| \Rot \EE^{s}(x)\times\frac{x}{| x |}- i\kappa_{e}\EE^{s}(x) \right|
 =0.\end{equation}

It is well known that this problem has at most one solution under some mild restrictions on the dielectric constants. We give sufficient conditions in the next theorem, and for completeness we give its simple proof.
\begin{theorem}
\label{t1} 
 Assume that the constants $\mu_{i}$, $\kappa_{i}$, $\mu_{e}$ and $\kappa_{e}$ satisfy:
\begin{enumerate}
\item[(i)]
 $\kappa_{e}$ is real and positive or $\Im(\kappa_{e})>0$,\\[-2ex]
 \item[(ii)] 
 $\Im\left(\overline{\kappa_{e}}\dfrac{\mu_{e}}{\mu_{i}}\right)\leq0$ 
 and $\Im\left(\overline{\kappa_{e}}\dfrac{\mu_{e}}{\mu_{i}}\kappa_{i}^2\right)\ge0$.
\end{enumerate}
Then the dielectric scattering problem has at most one solution.
 \end{theorem}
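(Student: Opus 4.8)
The plan is to prove uniqueness by the classical energy/Rellich argument, so by linearity I take $\EE^{inc}=0$ and show that any solution pair $(\EE^i,\EE^s)$ vanishes. First I would apply the vector Green formula for $\Rot\Rot$ on $\Omega$ with the conjugated field $\overline{\EE^i}$; together with \eqref{(1.2a)} this gives
\[
 \int_\Gamma (\nn\times\Rot\EE^i)\cdot\overline{\EE^i}\,ds
 =\int_\Omega\big(\kappa_i^2|\EE^i|^2-|\Rot\EE^i|^2\big)\,dx,
\]
the boundary term being understood as the duality pairing between the tangential trace spaces on $\Gamma$ recalled in Section~\ref{TrPot} (this is what makes the formula rigorous on a Lipschitz boundary). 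I would run the same computation on the truncated exterior domain $\Omega^c\cap B_R$, with $B_R$ the ball of radius $R$ and $S_R=\partial B_R$; using \eqref{(1.2b)} and that the outward normal of $\Omega^c$ along $\Gamma$ is $-\nn$, this yields the analogous identity with an additional flux term $J_R=\int_{S_R}(\tfrac{x}{|x|}\times\Rot\EE^s)\cdot\overline{\EE^s}\,ds$ over the sphere.

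The two $\Gamma$-pairings are then eliminated using the transmission conditions. The homogeneous versions of \eqref{T1} and \eqref{T2} say that $\EE^i$ and $\EE^s$ have the same tangential trace and that $\mu_i^{-1}\,\nn\times\Rot\EE^i=\mu_e^{-1}\,\nn\times\Rot\EE^s$, so that $\mu_i^{-1}\!\int_\Gamma(\nn\times\Rot\EE^i)\cdot\overline{\EE^i}\,ds=\mu_e^{-1}\!\int_\Gamma(\nn\times\Rot\EE^s)\cdot\overline{\EE^s}\,ds$. Multiplying the interior identity by $\mu_i^{-1}$ and the exterior one by $\mu_e^{-1}$, cancelling the boundary pairings by this relation, then multiplying through by $\mu_e\overline{\kappa_e}$ and taking imaginary parts, I expect to obtain
\begin{align*}
 \Im\!\Big(\overline{\kappa_e}\tfrac{\mu_e}{\mu_i}\kappa_i^2\Big)\!\int_\Omega|\EE^i|^2\,dx
 &-\Im\!\Big(\overline{\kappa_e}\tfrac{\mu_e}{\mu_i}\Big)\!\int_\Omega|\Rot\EE^i|^2\,dx \\
 +\,|\kappa_e|^2\Im(\kappa_e)\!\int_{\Omega^c\cap B_R}\!\!\!|\EE^s|^2\,dx
 &+\Im(\kappa_e)\!\int_{\Omega^c\cap B_R}\!\!\!|\Rot\EE^s|^2\,dx
 =\Im\big(\overline{\kappa_e}\,J_R\big).
\end{align*}
The purpose of the factor $\mu_e\overline{\kappa_e}$ is exactly that hypotheses (i)--(ii) render all four left-hand coefficients nonnegative: the two inequalities in (ii) control the two interior integrals, while (i) ensures $\Im(\kappa_e)\ge0$ and hence controls the two exterior integrals. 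It follows that $\Im(\overline{\kappa_e}\,J_R)\ge0$ for every $R$.

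It remains to feed in the radiation condition. If $\Im(\kappa_e)>0$, then $\EE^s$ decays exponentially, $J_R\to0$, the exterior integrals converge as $R\to\infty$, and the limiting identity forces the left-hand side to vanish; since the coefficient $|\kappa_e|^2\Im(\kappa_e)$ of $\int_{\Omega^c}|\EE^s|^2$ is then strictly positive, $\EE^s\equiv0$. If $\kappa_e$ is real and positive, I would instead expand $\int_{S_R}\big|\Rot\EE^s\times\tfrac{x}{|x|}-i\kappa_e\EE^s\big|^2\,ds$ and use $\Im(\overline{\kappa_e}\,J_R)=\kappa_e\,\Im(J_R)\ge0$ to bound $\int_{S_R}|\EE^s|^2\,ds$ by this radiation integral, which tends to $0$ by \eqref{T3}; Rellich's lemma together with unique continuation then gives $\EE^s\equiv0$ in $\Omega^c$. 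Finally, once $\EE^s=0$, the conditions \eqref{T1} and \eqref{T2} show that $\EE^i$ has vanishing Cauchy data $\nn\times\EE^i=0$ and $\nn\times\Rot\EE^i=0$ on $\Gamma$. Extending $\EE^i$ by zero across $\Gamma$ then produces a compactly supported distributional solution of $\Rot\Rot\EE-\kappa_i^2\EE=0$ in $\R^3$; taking its divergence gives $\Div\EE^i=0$, hence $\Delta\EE^i+\kappa_i^2\EE^i=0$ componentwise, and a compactly supported solution of the Helmholtz equation vanishes identically. Thus $\EE^i\equiv0$ as well, and uniqueness follows.

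The two Green formulas and the sign bookkeeping are routine; the step I expect to demand the most care is the real-$\kappa_e$ case, namely controlling $\int_{S_R}|\EE^s|^2$ by the radiation integral and invoking Rellich's lemma and unique continuation, together with the careful justification of the boundary duality pairings and of Green's formula in the finite-energy spaces on a merely Lipschitz surface.
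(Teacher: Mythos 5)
Your proposal is correct and follows essentially the same route as the paper: the same energy identity obtained from Green's formula and the transmission conditions, multiplication by $\overline{\kappa_{e}}$ (your extra positive factor $\mu_{e}$ changes nothing), sign bookkeeping under (i)--(ii) to get $\Im(\overline{\kappa_{e}}\,J_{R})\ge 0$, expansion of the Silver--M\"uller integral to force $\int_{S_{R}}|\EE^{s}|^{2}\to 0$, and Rellich's lemma. The only cosmetic differences are your case split between $\Im(\kappa_{e})>0$ and real $\kappa_{e}$ (the paper runs one argument) and your zero-extension/Helmholtz argument for $\EE^{i}=0$, where the paper instead uses the Stratton--Chu representation \eqref{intrep} with vanishing Cauchy data; both are standard and valid.
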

 
\begin{proof} 
 We use similar arguments as in the acoustic case \cite{KlMa}. 
 Assume that $\EE^{inc} = 0$. Let $(\EE^{i},\EE^{s})$ be a solution of the homogeneous scattering problem. Let $B_{R}$ be a ball of radius $R$ large enough such that ${\Omega\subset B_{R}}$ and let $\nn_{R}$ the unit outer normal vector to $B_{R}$. Integration by parts using the Maxwell equations \eqref{(1.2a)} and \eqref{(1.2b)} and the transmission conditions \eqref{T1} and \eqref{T2}
 gives :
$$
\int_{\partial{B_{R}}}(\Rot\EE^{s}\times \nn_{R})\cdot \overline{\EE^{s}}
=   \int\limits_{B_{R}\backslash\Omega}\{|\Rot \EE^{s}|^{2}-\kappa_{e}^{2}|
\EE^{s}|^{2}\}  
+ \dfrac{\mu_{e}}{\mu_{i}}\int_{\Omega}\{|\Rot \EE^{i}|^2
-\kappa_{i}^2| \EE^{i}|^2\}
$$
We multiply this by $\overline{\kappa_{e}}$  and take the imaginary part:
\[ 
\begin{split}
 \Im\left(\overline{\kappa_{e}}\displaystyle{\int_{\partial{B_{R}}}(\Rot\EE^s\times \nn_{R})\cdot \overline{\EE^s}}\right)
&=   \Im(\overline{\kappa_{e}})\left(\int_{B_{R}\setminus\Omega}
\{|\Rot \EE^s|^{2}+|\kappa_{e} \EE^{s}|^{2}\}\right) \\
 + &
 \Im\left(\overline{\kappa_{e}}\dfrac{\mu_{e}}{\mu_{i}}\right)\int_{\Omega}|\Rot \EE^{i}|^2
-\Im\left(\overline{\kappa_{e}}\dfrac{\mu_{e}}{\mu_{i}}\kappa_{i}^2\right)\displaystyle{\int_{\Omega}| \EE^{i}|^2}.
\end{split}
\]
Under the hypotheses (i) and (ii), all terms on the right hand side are non-positive.

Thanks to the Silver-M\"uller condition, we have
$$\begin{array}{l}\lim\limits_{R\rightarrow+\infty}\displaystyle{\int_{\partial B_{R}}|\Rot\EE^s\times\nn_{R}-i\kappa_{e}\EE^s|^2=0.}\end{array}
$$ 
Developing this expression, we get
$$\lim\limits_{R\rightarrow+\infty}\int_{\partial B_{R}}|\Rot\EE^s\times\nn_{R}|^2+|\kappa_{e} \EE^s|^2-2\Re\left(\Rot\EE^s\times\nn_{R}\cdot\overline{i\kappa_{e}\EE^s}\right)=0.
$$
As we have seen, we have 
$$
 \int_{\partial B_{R}}\Re\left(\Rot\EE^s\times\nn_{R}\cdot\overline{i\kappa_{e}\EE}\right)=\Im\int_{\partial B_{R}}\left(\overline{\kappa_{e}}\Rot\EE^s\times\nn_{R}\cdot\overline{\EE}\right)\leq0.
$$ 
It follows that 
$$
 \lim_{R\rightarrow+\infty}\int_{\partial{B_{R}}}|\EE^{s}|^2 = 0.
$$
Thus, by Rellich's lemma \cite{Colton}, $\EE^{s}= 0$ in $\Omega^c$. Using the transmission conditions, we obtain $\gamma_{D}\EE^{i}=\gamma_{N_{\kappa_{i}}}\EE^i=0$. It follows that  $\EE^{i}=0$ in $\Omega$.
\end{proof}

\section{Traces and electromagnetic potentials} \label{TrPot}

We use some well known results about traces of vector fields and integral representations of time-harmonic electromagnetic fields on a bounded domain $\Omega$. Details can be found 
in \cite{BuCi2, BuMCSc, BuCosShe,BuHiPeSc, MartC, Ned}. Recall that the boundary $\Gamma$ is only assumed to be Lipschitz continuous, unless stated otherwise.

\begin{definition}\label{2.1} For  a vector function $\uu\in (\mathscr{C}^{\infty}(\overline{\Omega}))^3$ and a scalar function $v\in\mathscr{C}^{\infty}(\overline{\Omega})$ we define the
traces :
\begin{align*}
\gamma v&=v_{|_{\Gamma}}\\
\gamma_{D}\uu&=(\nn\times \uu)_{|_{\Gamma}}\textrm{ (Dirichlet)}\\
\gamma_{N_{\kappa}}\uu&=\kappa^{-1}(\nn\times\Rot \uu)_{|_{\Gamma}}\textrm{ (Neumann).}
\end{align*}
\end{definition}

We use standard Sobolev spaces  $H^t(\Gamma)$, $t\in[-1,1]$, endowed with standard norms $||\cdot||_{H^t(\Gamma)}$ and with the  convention $H^0(\Gamma)=~L^2(\Gamma)$. Spaces of vector densities are denoted by boldface letters, thus $\HH^t(\Gamma)=\big(H^t(\Gamma)\big)^3$. We define spaces of tangential vector fields as  $\HH_{\times}^{t}(\Gamma)=\nn\times\HH^{t}(\Gamma)$.
Note that on non-smooth boundaries, the latter space different, in general, from the other space of  tangential vector fields 
$\HH_{\|}^{t}(\Gamma)=\nn\times\HH_{\times}^{t}(\Gamma)$. On smooth boundaries, the two spaces coincide. 
For $s=0$ we set $\HH^0_{\times}(\Gamma)=\LL_{\times}^2(\Gamma)$.

The trace maps 
\begin{align*}
\gamma:H^{s+\frac{1}{2}}(\Omega)&\rightarrow H^{s}(\Gamma), \\
\gamma_{D}:\HH^{s+\frac{1}{2}}(\Omega)&\rightarrow \HH_{\times}^{s}(\Gamma)
\end{align*}
 are continuous for all $s>0$, if the domain is smooth. On a polyhedron, the trace maps are continuous for $s\in(0,2)$, whereas for a general bounded Lipschitz domain in general, the validity is only given for $s\in(0,1)$.
For $s=1$, the trace operator $\gamma$ fails, in general, to map $H^{\frac{3}{2}}(\Omega)$ to $H^{1}(\Gamma)$, although $H^{1}(\Gamma)$ is well defined on the boundary $\Gamma$, see \cite{JerisonKenig95}.
 
The dual spaces of $H^t(\Gamma)$ and $\HH_{\times}^{t}(\Gamma)$ 
   with respect to the $L^2$ (or $\LL^2$) scalar product is denoted by $H^{-t}(\Gamma)$ and $\HH_{\times}^{-t}(\Gamma)$, respectively. 
 
We use the  surface differential operators: The tangential gradient denoted by $\nabla_{\Gamma}$, the surface divergence denoted by $\Div_{\Gamma}$, the tangential vector curl denoted by $\Rot_{\Gamma}$ and the surface scalar curl denoted by $\rot_{\Gamma}$. For their definitions we refer to \cite{BuCosShe}, \cite{MartC} and \cite{Ned}.
 
\begin{definition} We define the Hilbert space
$$
  \HH_{\times}^{-\frac{1}{2}}(\Div_{\Gamma},\Gamma)=\left\{ \jj\in
\HH_{\times}^{-\frac{1}{2}}(\Gamma),\Div_{\Gamma}\jj \in
H^{-\frac{1}{2}}(\Gamma)\right\}
$$
endowed with the norm 
$$
||\cdot||_{\HH_{\times}^{-\frac{1}{2}}(\Div_{\Gamma},\Gamma)}=||\cdot||_{\HH^{-\frac{1}{2}}_{\times}(\Gamma)}+||\Div_{\Gamma}\cdot||_{H^{-\frac{1}{2}}(\Gamma)}.
$$ 
\end{definition}

The skew-symmetric bilinear form 
$$
\begin{array}{rccl}\mathbf{\mathcal{B}}:&\HH_{\times}^{-\frac{1}{2}}(\Div_{\Gamma},\Gamma)\times\HH_{\times}^{-\frac{1}{2}}(\Div_{\Gamma},\Gamma)&\rightarrow&\C\vspace{.1cm}\\&(\;\jj,\mm)&\rightarrow&\mathbf{\mathcal{B}}(\;\jj,\;\mm)=\displaystyle{\int_{\Gamma}\jj\cdot(\mm\times\nn)}\;d\sigma\end{array}
$$ 
defines a non-degenerate duality product on 
$\HH_{\times}^{-\frac{1}{2}}(\Div_{\Gamma},\Gamma)$.

\begin{lemma}
\label{2.3} The operators $\gamma_{D}$ and $\gamma_{N}$ are linear and
continuous from $(\mathscr{C}^{\infty}(\overline{\Omega}))^3$ to $\LL_{\times}^2(\Gamma)$ and they
can be extended to continuous linear operators from $\HH(\Rot,\Omega)$ and 
$\HH(\Rot,\Omega)\cap\HH(\Rot\Rot,\Omega)$, respectively, to
$\HH_{\times}^{-\frac{1}{2}}(\Div_{\Gamma},\Gamma)$. 
Moreover, for all $\uu,\;\vv\in\HH(\Rot,\Omega)$, we have:
\begin{equation}
\label{IPP2}
\int_{\Omega}\left[\left(\Rot\uu\cdot\vv\right)-\left(\uu\cdot\Rot\vv\right)\right]dx=\mathcal{B}(\gamma_{D}\vv,\gamma_{D}\uu).
\end{equation}
\end{lemma}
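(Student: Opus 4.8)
The plan is to establish the three assertions in the order they are stated: first continuity on smooth fields into $\LL^2_\times(\Gamma)$, then the Green identity \eqref{IPP2} for smooth fields, and finally the extension to the energy spaces by density and duality, invoking the trace theory of the cited works for the precise characterization of $\HH_{\times}^{-\frac{1}{2}}(\Div_{\Gamma},\Gamma)$.

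For the first claim I would argue elementarily. For $\uu\in(\mathscr{C}^{\infty}(\overline{\Omega}))^3$ the restriction $\uu_{|_{\Gamma}}$ lies in $\LL^2(\Gamma)$ and depends continuously on $\uu$, and $\gamma_{D}\uu=\nn\times\uu_{|_{\Gamma}}$ is obtained by the bounded pointwise algebraic operation of taking the cross product with the unit normal, so $\gamma_{D}$ maps continuously into $\LL^2_\times(\Gamma)$. Since $\gamma_{N_\kappa}\uu=\kappa^{-1}\gamma_{D}(\Rot\uu)$, the same applies to $\gamma_N$ once one notes that $\Rot\uu$ is again smooth. Next, for the Green identity on smooth $\uu,\vv$, I would start from the pointwise vector identity $\Div(\uu\times\vv)=\Rot\uu\cdot\vv-\uu\cdot\Rot\vv$ and the divergence theorem, which give
$$\int_{\Omega}\bigl(\Rot\uu\cdot\vv-\uu\cdot\Rot\vv\bigr)\,dx=\int_{\Gamma}(\uu\times\vv)\cdot\nn\,d\sigma .$$
Then I would rewrite the boundary integrand using the triple-product identity $(\nn\times\uu)\times\nn=\uu-(\uu\cdot\nn)\nn$ (the tangential part of $\uu$) together with the definition $\mathcal{B}(\jj,\mm)=\int_{\Gamma}\jj\cdot(\mm\times\nn)\,d\sigma$; since $\nn\times\vv$ is tangential, the normal component of $\uu$ drops out, and after the sign bookkeeping in the scalar triple products the boundary term is identified with $\mathcal{B}(\gamma_{D}\vv,\gamma_{D}\uu)$. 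This proves \eqref{IPP2} for smooth fields.

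For the extension I would use that $(\mathscr{C}^{\infty}(\overline{\Omega}))^3$ is dense in $\HH(\Rot,\Omega)$ on a Lipschitz domain. Given $\uu\in\HH(\Rot,\Omega)$, the left-hand side of \eqref{IPP2} is a functional of $\vv$ bounded by $\|\uu\|_{\HH(\Rot,\Omega)}\|\vv\|_{\HH(\Rot,\Omega)}$. Using the surjectivity of $\gamma_{D}$ onto $\HH_{\times}^{-\frac{1}{2}}(\Div_{\Gamma},\Gamma)$ with a bounded right inverse, every $\mm$ in the trace space is $\gamma_{D}\vv$ for some $\vv$ with $\|\vv\|_{\HH(\Rot,\Omega)}\lesssim\|\mm\|$, so $\mm\mapsto\mathcal{B}(\gamma_{D}\uu,\mm)$ is a bounded functional; by the non-degeneracy of $\mathcal{B}$ this exhibits $\gamma_{D}\uu$ as an element of $\HH_{\times}^{-\frac{1}{2}}(\Div_{\Gamma},\Gamma)$ with $\|\gamma_{D}\uu\|\lesssim\|\uu\|_{\HH(\Rot,\Omega)}$, and \eqref{IPP2} passes to the limit by density. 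For $\gamma_N$ I would again use $\gamma_{N}\uu=\kappa^{-1}\gamma_{D}(\Rot\uu)$: if $\uu\in\HH(\Rot,\Omega)\cap\HH(\Rot\Rot,\Omega)$ then $\Rot\uu\in\HH(\Rot,\Omega)$, and applying the $\gamma_{D}$ result to $\Rot\uu$ yields the asserted continuity into the same space.

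The step I expect to be the main obstacle is not the formal computation but the trace-space machinery on a merely Lipschitz boundary: the very definition of the surface differential operators, the fact that the range of $\gamma_{D}$ is exactly $\HH_{\times}^{-\frac{1}{2}}(\Div_{\Gamma},\Gamma)$ with a bounded lifting, and the extension of $\mathcal{B}$ to a non-degenerate duality on that space. These are precisely the delicate results established in the cited works, so in practice this part of the proof would be a careful assembly of those statements rather than a self-contained argument; on smooth boundaries everything reduces to classical trace theory.
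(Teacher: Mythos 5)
The paper offers no proof of this lemma at all: it is quoted as known, with the trace theory deferred to the references cited at the top of Section~\ref{TrPot} (in particular Buffa--Costabel--Sheen for the Lipschitz case). Your reconstruction follows exactly the standard argument of those references — smooth case, divergence theorem, density of smooth fields in $\HH(\Rot,\Omega)$, bounded lifting for $\gamma_{D}$, duality via $\mathcal{B}$, and $\gamma_{N_{\kappa}}=\kappa^{-1}\gamma_{D}\Rot$ — so the architecture is right. But the one step you explicitly decline to write out (``after the sign bookkeeping \dots the boundary term is identified with $\mathcal{B}(\gamma_{D}\vv,\gamma_{D}\uu)$'') is precisely where the only nontrivial content of the smooth-case computation lies, and as asserted it is wrong under the paper's own conventions. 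With $\gamma_{D}\ww=\nn\times\ww$ and $\mathcal{B}(\jj,\mm)=\int_{\Gamma}\jj\cdot(\mm\times\nn)\,d\sigma$, the divergence theorem gives
\begin{equation*}
\int_{\Omega}\bigl(\Rot\uu\cdot\vv-\uu\cdot\Rot\vv\bigr)\,dx
=\int_{\Gamma}\nn\cdot(\uu\times\vv)\,d\sigma
=\int_{\Gamma}(\nn\times\uu)\cdot\bigl((\nn\times\vv)\times\nn\bigr)\,d\sigma
=\mathcal{B}(\gamma_{D}\uu,\gamma_{D}\vv),
\end{equation*}
which, since $\mathcal{B}$ is skew-symmetric, is the \emph{negative} of the right-hand side you state. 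A half-space check makes this unambiguous: for $\Omega=\{z<0\}$, $\nn=(0,0,1)$, $\uu=(f(z),0,0)$, $\vv=(0,g(z),0)$, the left-hand side is $\int(fg)'\,dz=f(0)g(0)$ per unit boundary area, while $\mathcal{B}(\gamma_{D}\vv,\gamma_{D}\uu)=-f(0)g(0)$. The argument order in \eqref{IPP2} is thus presumably a benign slip in the paper (conventions for $\gamma_{D}$, for $\mathcal{B}$, and for which field carries the curl vary across the cited literature — which is exactly why this bookkeeping cannot be waved through); a blind proof must either display the computation and land on $\mathcal{B}(\gamma_{D}\uu,\gamma_{D}\vv)$, or flag the discrepancy — not assert agreement with the printed formula.

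Two smaller points on the extension step. First, mere non-degeneracy of $\mathcal{B}$ does not let you conclude that the bounded functional $\mm\mapsto\mathcal{B}(\gamma_{D}\uu,\mm)$ is represented by an element of $\HH_{\times}^{-\frac{1}{2}}(\Div_{\Gamma},\Gamma)$ with controlled norm: a non-degenerate pairing need not identify the space with its dual. What is needed, and what Buffa--Costabel--Sheen actually prove for Lipschitz $\Gamma$, is that $\mathcal{B}$ is a \emph{perfect} pairing realizing $\HH_{\times}^{-\frac{1}{2}}(\Div_{\Gamma},\Gamma)$ as its own dual, in addition to the surjectivity of $\gamma_{D}$ with bounded right inverse that you invoke. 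Second, if you define the extension directly for $\uu\in\HH(\Rot,\Omega)$ via the volume functional, you must check independence of the lifting $\vv$ of $\mm$, i.e.\ that the integral vanishes whenever $\gamma_{D}\vv=0$ (this uses density of smooth compactly supported fields in the kernel $\HH_{0}(\Rot,\Omega)$); the cleaner route, consistent with your closing sentence, is to prove the bound $\|\gamma_{D}\uu\|_{\HH_{\times}^{-\frac{1}{2}}(\Div_{\Gamma},\Gamma)}\lesssim\|\uu\|_{\HH(\Rot,\Omega)}$ for smooth $\uu$ and then extend by density. Your treatment of $\gamma_{N_{\kappa}}$ by applying the $\gamma_{D}$ result to $\Rot\uu$ is correct as stated. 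With the sign step done honestly and the duality statement upgraded as indicated, your proof is sound and is the same assembly of cited results on which the paper itself relies.
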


For  $\uu\in \HH_{\loc}(\Rot,\overline{\Omega^c})$ and $\vv \in
\HH_{\loc}(\Rot\Rot,\overline{\Omega^c}))$
we define $\gamma_{D}^c\uu $ and $\gamma_{N}^c\vv $ in the same way and the same mapping properties hold true.\medskip

Let $\kappa$ be a complex number such that $\Im(\kappa)\ge0$ and let 
$$
  G(\kappa,|x-y|)=\dfrac{e^{i\kappa|x-y|}}{4\pi| x-y|}
$$
be the fundamental solution of the Helmholtz equation 
$$
  {\Delta u + \kappa^2u =0}.
$$
The single layer potential  $\psi_{\kappa}$ is given by   :
\begin{center}$(\psi_{\kappa}u)(x) = \displaystyle{\int_{\Gamma}G(\kappa,|x-y|)u(y) d\sigma(y)}\qquad x
\in\R^3\backslash\Gamma$,\end{center}
and its trace by 
$$
 V_{\kappa}u(x)= \int_{\Gamma}G(\kappa,|x-y|)u(y) d\sigma(y)\qquad x
\in\Gamma.
$$
For a proof of the following well-known result, see \cite{HsiaoWendland,Ned}.
\begin{lemma}
\label{3.1}  
The operators
$$ 
\begin{array}{ll} \psi_{\kappa} & : H^{-\frac{1}{2}}(\Gamma)\rightarrow H^1_{\loc}(\R^3) \vspace{2mm} \\ 
V_{\kappa}& : H^{-\frac{1}{2}}(\Gamma)\rightarrow H^{\frac{1}{2}}(\Gamma) \end{array}
$$  
are continuous.
\end{lemma}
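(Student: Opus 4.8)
The plan is to recognise $\psi_\kappa$ as a volume (Newtonian) potential applied to a distribution supported on $\Gamma$, and to split the required continuity into two elementary statements. Writing $u\,\delta_\Gamma$ for the distribution on $\R^3$ acting by $\langle u\,\delta_\Gamma,\phi\rangle=\int_\Gamma u\,(\gamma\phi)\,d\sigma$, one has the identity $\psi_\kappa u = G(\kappa,|\cdot|)*(u\,\delta_\Gamma)$, i.e. $\psi_\kappa u$ is the convolution of the fundamental solution with the surface layer $u\,\delta_\Gamma$. Since $\Gamma$ is compact, $u\,\delta_\Gamma$ has compact support, so it suffices to prove: (a) the map $u\mapsto u\,\delta_\Gamma$ is continuous from $H^{-\frac12}(\Gamma)$ into the compactly supported elements of $H^{-1}(\R^3)$; and (b) the volume potential $f\mapsto G(\kappa,|\cdot|)*f$ is continuous from compactly supported elements of $H^{-1}(\R^3)$ into $H^1_\loc(\R^3)$.

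For (a), I would argue by duality. For $\phi\in H^1(\R^3)$ the trace theorem gives $\gamma\phi\in H^{\frac12}(\Gamma)$ with $\|\gamma\phi\|_{H^{\frac12}(\Gamma)}\le C\,\|\phi\|_{H^1(\R^3)}$ (the case $s=\frac12\in(0,1)$ of the trace estimate recalled above, valid on a Lipschitz boundary, via restriction to $\Omega$). Hence $|\langle u\,\delta_\Gamma,\phi\rangle|=|\langle u,\gamma\phi\rangle_\Gamma|\le\|u\|_{H^{-\frac12}(\Gamma)}\|\gamma\phi\|_{H^{\frac12}(\Gamma)}\le C\,\|u\|_{H^{-\frac12}(\Gamma)}\|\phi\|_{H^1(\R^3)}$, which is exactly the bound $\|u\,\delta_\Gamma\|_{H^{-1}(\R^3)}\le C\,\|u\|_{H^{-\frac12}(\Gamma)}$. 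For (b), the point is that $G(\kappa,|\cdot|)$ is a fundamental solution of the elliptic operator $\Delta+\kappa^2$, so convolution with it gains two derivatives; composing (a) and (b) then yields $\psi_\kappa$ continuous from $H^{-\frac12}(\Gamma)$ into $H^1_\loc(\R^3)$, which is the first assertion. Finally, since $\psi_\kappa u\in H^1_\loc(\R^3)$, it has a single two-sided Dirichlet trace on $\Gamma$, which equals $V_\kappa u$; applying the trace map $\gamma:H^1(\Omega)\to H^{\frac12}(\Gamma)$ (again the case $s=\frac12$) to the $H^1_\loc$ bound just obtained gives the continuity of $V_\kappa:H^{-\frac12}(\Gamma)\to H^{\frac12}(\Gamma)$.

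The main technical point is the two-derivative gain in (b) for real wave numbers, where the symbol $(|\xi|^2-\kappa^2)^{-1}$ is singular on the sphere $|\xi|=\kappa$; this singularity is responsible only for the non-decay at infinity and is absorbed by the subscript $\loc$. To make (b) rigorous I would compare with the choice $\kappa=i$: the modified-Helmholtz potential has symbol $(|\xi|^2+1)^{-1}$ and is a genuine Bessel potential of order $-2$, hence continuous $H^{-1}(\R^3)\to H^1(\R^3)$ globally, so that $G(i,|\cdot|)*(u\,\delta_\Gamma)\in H^1(\R^3)$; the remainder kernel $G(\kappa,r)-G(i,r)=\frac{e^{i\kappa r}-e^{-r}}{4\pi r}$ is bounded and of class $C^1$ at $r=0$, so convolution against it is strongly smoothing and maps $u\,\delta_\Gamma$ into $H^1_\loc(\R^3)$ as well. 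An alternative, entirely variational route avoiding Fourier analysis is to characterise $\psi_\kappa u$ as the unique $H^1_\loc$-solution, satisfying the radiation condition, of the transmission problem $(\Delta+\kappa^2)v=0$ in $\Omega\cup\Omega^c$ with $[\gamma v]=0$ and prescribed jump $[\partial_n v]=u$ of the normal derivative, and to derive the energy estimate directly; this has the advantage of staying within the Lipschitz framework and the trace machinery of Lemma~\ref{2.3}.
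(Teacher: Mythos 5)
The paper offers no proof of this lemma at all: it simply cites Hsiao--Wendland and N\'ed\'elec. Your argument is correct and is, in substance, the standard proof found in that literature (it is essentially the argument of Costabel's ``elementary results'' paper, reference \cite{MartC}): the factorization $\psi_\kappa u = G(\kappa,|\cdot|)*(u\,\delta_\Gamma)$, the duality estimate $\|u\,\delta_\Gamma\|_{H^{-1}(\R^3)}\le C\|u\|_{H^{-\frac12}(\Gamma)}$ via the Lipschitz trace theorem, the two-derivative gain of the Newtonian-type potential, and then $V_\kappa u=\gamma(\psi_\kappa u)$ by one more application of the trace theorem. Your handling of real wave numbers by subtracting the $\kappa=i$ kernel is also the standard device. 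Two cosmetic points are worth fixing. First, the remainder $R(x)=G(\kappa,|x|)-G(i,|x|)$ is \emph{not} of class $C^1$ on $\R^3$ at the origin: its expansion contains a term proportional to $|x|$, so as a function of $x$ it is only Lipschitz. The clean statement is that $R\in H^2_{\loc}(\R^3)$, which follows from $R\in L^\infty_{\loc}$ together with $\Delta R=-\kappa^2 G(\kappa,|\cdot|)-G(i,|\cdot|)\in L^2_{\loc}(\R^3)$ (the $1/r$ singularity is square-integrable in three dimensions); convolution of an $H^2_{\loc}$ kernel with a compactly supported $H^{-1}$ distribution then lands in $H^1_{\loc}$, which is all you need. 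Second, for $u\in H^{-\frac12}(\Gamma)$ the boundary integral defining $V_\kappa u$ does not converge pointwise, so the identity $V_\kappa u=\gamma(\psi_\kappa u)$ should be stated as holding for, say, $u\in L^2(\Gamma)$ and then extended by density; you use this implicitly and it deserves one sentence. Neither point is a gap, and your concluding remark that one can instead characterize $\psi_\kappa u$ variationally as the solution of the transmission problem with jump data $[\partial_n v]=u$ is precisely the alternative route taken in the references the paper cites.
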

We define the electric potential $\Psi_{E_{\kappa}}$ generated by $\jj\in
\HH_{\times}^{-\frac{1}{2}}(\Div_{\Gamma},\Gamma) $ by 
$$\Psi_{E_{\kappa}}\jj :=
\kappa\psi_{\kappa}\jj + \kappa^{-1}\nabla\psi_{\kappa}\Div_{\Gamma}\jj
$$
This can be written as $\Psi_{E_{\kappa}}\jj :=
\kappa^{-1}\Rot\Rot\psi_{\kappa}\jj$ because of the Helmholtz equation and
the identity $\Rot\Rot = -\Delta +\nabla\Div$ (cf. \cite{BuCi2}).

We define the magnetic potential  
$\Psi_{M_{\kappa}}$ generated by $\mm\in \HH_{\times}^{-\frac{1}{2}}(\Div_{\Gamma},\Gamma) $
by 
$$
 \Psi_{M_{\kappa}}\mm := \Rot\psi_{\kappa}\mm.
$$
 These potentials satisfy 
$$
 \kappa^{-1}\Rot\Psi_{E_{\kappa}} =
\Psi_{M_{\kappa}}\quad\text{ and }\quad\kappa^{-1}\Rot\Psi_{M_{\kappa}} =
\Psi_{E_{\kappa}}.
$$
We denote the identity operator by $\Id$.
\begin{lemma}
\label{3.2}
The potential operators $\Psi_{E_{\kappa}}$ and $\Psi_{M_{\kappa}}$ are
continuous from $\HH_{\times}^{-\frac{1}{2}}(\Div_{\Gamma},\Gamma)$ to
$\HH_{\loc}(\Rot,\R^3)$. 
For $\jj\in \HH_{\times}^{-\frac{1}{2}}(\Div_{\Gamma},\Gamma)$ we have 
$$(\Rot\Rot -\kappa^2\Id)\Psi_{E_{\kappa}}\jj = 0\textrm{ and }(\Rot\Rot
-\kappa^2\Id)\Psi_{M_{\kappa}}\mm = 0\textrm{ in }\R^3\backslash\Gamma$$ and  $\Psi_{E_{\kappa}}\jj$
and $\Psi_{M_{\kappa}}\mm$ satisfy the Silver-M\"uller condition.
\end{lemma}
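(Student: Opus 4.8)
The plan is to reduce every assertion to the scalar single-layer potential $\psi_{\kappa}$ and then transport the information to the vector potentials through their algebraic definitions and the two curl identities $\kappa^{-1}\Rot\Psi_{E_{\kappa}}=\Psi_{M_{\kappa}}$ and $\kappa^{-1}\Rot\Psi_{M_{\kappa}}=\Psi_{E_{\kappa}}$ recalled just above the statement. For the continuity, I would first observe that a density $\jj\in\HH_{\times}^{-1/2}(\Div_{\Gamma},\Gamma)$ has each Cartesian component in $H^{-1/2}(\Gamma)$ and satisfies $\Div_{\Gamma}\jj\in H^{-1/2}(\Gamma)$, both quantities being controlled by the graph norm. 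Applying Lemma~\ref{3.1} componentwise then gives $\psi_{\kappa}\jj\in\HH^{1}_{\loc}(\R^3)$ and $\psi_{\kappa}\Div_{\Gamma}\jj\in H^{1}_{\loc}(\R^3)$, with the corresponding bounds on every bounded $B\subset\R^3$. Hence each term of $\Psi_{E_{\kappa}}\jj=\kappa\psi_{\kappa}\jj+\kappa^{-1}\nabla\psi_{\kappa}\Div_{\Gamma}\jj$ lies in $\LL^2(B)$ with $\|\Psi_{E_{\kappa}}\jj\|_{\LL^2(B)}$ bounded by $\|\jj\|_{\HH_{\times}^{-1/2}(\Div_{\Gamma},\Gamma)}$; this is the one place where the $\Div_{\Gamma}$-part of the norm is genuinely needed.

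For the curl of $\Psi_{E_{\kappa}}$, the identity $\Rot\Psi_{E_{\kappa}}\jj=\kappa\Psi_{M_{\kappa}}\jj=\kappa\Rot\psi_{\kappa}\jj$ shows $\Rot\Psi_{E_{\kappa}}\jj\in\LL^2(B)$ with a bound in terms of $\|\jj\|_{\HH^{-1/2}_{\times}(\Gamma)}$ alone, so $\Psi_{E_{\kappa}}\jj\in\HH(\Rot,B)$ continuously. The same argument with the roles reversed, using $\Psi_{M_{\kappa}}\mm=\Rot\psi_{\kappa}\mm\in\LL^2(B)$ and $\Rot\Psi_{M_{\kappa}}\mm=\kappa\Psi_{E_{\kappa}}\mm$, handles $\Psi_{M_{\kappa}}$. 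For the Maxwell equations in $\R^3\backslash\Gamma$, I would simply compose the two curl identities: $\Rot\Rot\Psi_{E_{\kappa}}=\kappa\Rot\Psi_{M_{\kappa}}=\kappa^2\Psi_{E_{\kappa}}$, and symmetrically for $\Psi_{M_{\kappa}}$. If a self-contained argument is preferred, it suffices to note that for $x\notin\Gamma$ and $y\in\Gamma$ the kernel $G(\kappa,|x-y|)$ is smooth in $x$ and solves the Helmholtz equation there, so $(\Delta+\kappa^2)\psi_{\kappa}u=0$ componentwise in $\R^3\backslash\Gamma$; combined with $\Rot\Rot=-\Delta+\nabla\Div$ and the surface integration by parts identity $\Div\psi_{\kappa}\mm=\psi_{\kappa}\Div_{\Gamma}\mm$ for tangential $\mm$, this recovers both the identities and the equations. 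This part is essentially immediate.

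The radiation condition is the only genuinely computational step, and I expect it to be the main obstacle. Here I would use that, $\Gamma$ being bounded and $\Im\kappa\ge0$, as $|x|\to\infty$ one has $|x-y|=|x|-\hat x\cdot y+O(|x|^{-1})$ uniformly in $y\in\Gamma$, whence each potential admits an expansion $\Psi(x)=\frac{e^{i\kappa|x|}}{|x|}\,a_{\infty}(\hat x)+O(|x|^{-2})$ with $\hat x=x/|x|$, the gradients contributing a factor $i\kappa\hat x$ to leading order since $\nabla_x G=i\kappa\hat x\,G+O(|x|^{-2})$. The crucial structural point is that the far-field amplitude is tangential, $\hat x\cdot a_{\infty}(\hat x)=0$: for $\Psi_{M_{\kappa}}=\Rot\psi_{\kappa}\mm$ this follows from the leading term having the form $\hat x\times(\cdots)$, and for $\Psi_{E_{\kappa}}$ from the cancellation between $\kappa\psi_{\kappa}\jj$ and $\kappa^{-1}\nabla\psi_{\kappa}\Div_{\Gamma}\jj$. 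Granting this, $\Rot\Psi\times\hat x=i\kappa\,\frac{e^{i\kappa|x|}}{|x|}\bigl((\hat x\times a_{\infty})\times\hat x\bigr)+O(|x|^{-2})=i\kappa\,\Psi+O(|x|^{-2})$, using $(\hat x\times a_{\infty})\times\hat x=a_{\infty}$ when $a_{\infty}\perp\hat x$; multiplying by $|x|$ and letting $|x|\to\infty$ yields the Silver-M\"uller condition \eqref{T3}. The remaining delicate point is to make the uniformity of these asymptotics rigorous for densities of negative Sobolev order, which is handled by the mapping properties of Lemma~\ref{3.1} together with the smoothness and decay of $G$ away from $\Gamma$; this verification is standard (cf. \cite{Ned}).
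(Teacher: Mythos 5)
Your argument is the classical smooth-boundary proof (reduce componentwise to the scalar single layer potential of Lemma~\ref{3.1}, transport via the curl identities, and read off the Silver-M\"uller condition from the kernel asymptotics, as in \cite{Ned}), and in the smooth case it is essentially complete and correct; in particular the tangentiality of the far field of $\Psi_{E_{\kappa}}$, which you assert as a ``cancellation'', does hold: writing the far-field coefficient of $\kappa^{-1}\nabla\psi_{\kappa}\Div_{\Gamma}\jj$ and integrating by parts on $\Gamma$, $\langle \Div_{\Gamma}\jj, e^{-i\kappa\hat{x}\cdot y}\rangle = i\kappa\,\hat{x}\cdot\langle \jj, e^{-i\kappa\hat{x}\cdot y}\rangle$, so the amplitude is exactly $\kappa\,\hat{x}\times(a_{1}\times\hat{x})$ with $a_{1}$ the far-field coefficient of $\psi_{\kappa}\jj$. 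The paper itself offers no proof of Lemma~\ref{3.2}; it quotes it from the literature, and the relevant references cover the Lipschitz case.

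That is where the genuine gap lies: the lemma is stated under the paper's standing assumption that $\Gamma$ is only Lipschitz, and your very first step fails there. On a Lipschitz surface the normal $\nn$ is merely $\LL^{\infty}$, multiplication by $\nn$ is not bounded on $H^{\pm\frac{1}{2}}(\Gamma)$, and the space $\HH_{\times}^{-\frac{1}{2}}(\Gamma)$ --- defined in the paper by duality from $\nn\times\HH^{\frac{1}{2}}(\Gamma)$ --- is \emph{not} a subspace of $\bigl(H^{-\frac{1}{2}}(\Gamma)\bigr)^3$: the graph norm of $\HH_{\times}^{-\frac{1}{2}}(\Div_{\Gamma},\Gamma)$ does not control Cartesian components, so Lemma~\ref{3.1} cannot be applied componentwise, and the same defect propagates to your pairing of $\jj$ with the kernel in the asymptotic step. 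The Lipschitz proof is genuinely different: it rests on the identification of $\HH_{\times}^{-\frac{1}{2}}(\Div_{\Gamma},\Gamma)$ as the trace space $\gamma_{D}\HH(\Rot,\Omega)$ and on duality arguments using the pairing $\mathcal{B}$ and the integration by parts formula \eqref{IPP2}, as carried out in \cite{BuCosShe,BuMCSc,BuHiPeSc,MiMi02}; this is precisely why the paper recalls that machinery rather than arguing componentwise. A further small point, independent of smoothness: since $[\gamma_{D}]\Psi_{M_{\kappa}}=-\Id$, the field $\Psi_{M_{\kappa}}\mm$ has a tangential jump across $\Gamma$, so its \emph{global} distributional curl contains a surface term and the stated membership in $\HH_{\loc}(\Rot,\R^3)$ must be read piecewise, i.e.\ in $\HH(\Rot,\Omega)$ and $\HH_{\loc}(\Rot,\overline{\Omega^c})$; your identity $\Rot\Psi_{M_{\kappa}}\mm=\kappa\Psi_{E_{\kappa}}\mm$ is the correct piecewise statement, but you should say so explicitly rather than bound a global curl that does not lie in $\LL^2_{\loc}$.
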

It follows that the traces $\gamma_{D}$, $\gamma_{N_{\kappa}}$, $
\gamma_{D}^c$ and $ \gamma_{N_{\kappa}}^c$ can be applied to $\Psi_{E_{\kappa}}$ and
$\Psi_{M_{\kappa}}$, resulting in continuous mappings from
$\HH_{\times}^{-\frac{1}{2}}(\Div_{\Gamma},\Gamma)$ to
itself satisfying
$$
\gamma_{N_{\kappa}}\Psi_{E_{\kappa}} =
\gamma_{D}\Psi_{M_{\kappa}} \quad\textrm{ and }\quad \gamma_{N_{\kappa}}\Psi_{M_{\kappa}} = \gamma_{D}\Psi_{E_{\kappa}}\,.
$$
Defining
$$
 \begin{array}{cclccl}\left[\gamma_{D}\right]&=&\gamma_{D} - \gamma_{D}^c,&\{\gamma_{D}\}&=&-\dfrac{1}{2}\left(\gamma_{D} + \gamma_{D}^c\right),\vspace{2mm}\\
\left[\gamma_{N_{\kappa}}\right]&=&\gamma_{N_{\kappa}} - \gamma_{N_{\kappa}}^c,&
 \{\gamma_{N_{\kappa}}\}&=&-\dfrac{1}{2}\left(\gamma_{N_{\kappa}} + \gamma_{N_{\kappa}}^c\right).
 \end{array}
$$
we have the following \emph{jump relations} (see \cite{BuHiPeSc}):
$$\begin{array}{cclccl}\left[\gamma_{D}\right]\Psi_{E_{\kappa}}&=&0,&\left[\gamma_{N_{\kappa}}\right]\Psi_{E_{\kappa}}&=&-\Id,\vspace{2mm}\\
\left[\gamma_{D}\right]\Psi_{M_{\kappa}}&=&-\Id,&
\left[\gamma_{N_{\kappa}}\right]\Psi_{M_{\kappa}}&=&0.
 \end{array}
$$

Now assume that $\EE\in \LL^2_{\loc}(\R^3)$ belongs to 
$\HH(\Rot,\Omega)$ in the interior domain and to 
$\HH_{\loc}(\Rot,\overline{\Omega^c})$ in the exterior domain
and satisfies the  equation 
\begin{equation}
\label{(*)}
  (\Rot\Rot - \kappa^2\Id)\,\EE = 0
\end{equation}  
in
$\R^3\setminus\Gamma$ and the Silver-M\"uller condition.
Then if we set 
$\jj = [\gamma_{N_{\kappa}}]\EE$, $\mm = [\gamma_{D}]\EE$, we have on $\R^3\setminus\Gamma$ the Stratton-Chu \textbf{integral representation}
\begin{equation}
\label{intrep} 
 \EE = - \Psi_{E_{\kappa}}\jj -\Psi_{M_{\kappa}}\mm.
\end{equation} 
Special cases of \eqref{intrep} are:
If $(\EE^{i},\EE^{s})$ solves the dielectric scattering problem, then
 \begin{equation} \label{(3.1)}-\Psi_{E_{\kappa_{e}}}\gamma_{N_{\kappa_{e}}}^c\EE^{s} -
 \Psi_{M_{\kappa_{e}}}\gamma_{D}^c\EE^{s} = \left\{ \begin{array}{ll} -\EE^{s} & x\in\Omega^c \\ 0 & x\in\Omega
 \end{array}\right.
\end{equation}
\begin{equation} \label{(3.2)}  
 {\Psi_{E_{\kappa_{e}}}\gamma_{N_{\kappa_{e}}}^c\left(\EE^{s}+\EE^{inc}\right) +
\Psi_{M_{\kappa_{e}}}\gamma_{D}^c\left(\EE^{s}+\EE^{inc}\right)} =\left \{ \begin{array}{ll} \EE^{s} &
x\in\Omega^c \\ -\EE^{inc} & x\in\Omega\end{array}\right.
\end{equation}
\begin{equation} \label{(3.3)} 
\qquad{-\Psi_{E_{\kappa_{i}}}\gamma_{N_{\kappa_{i}}}\EE^{i} -
 \Psi_{M_{\kappa_{i}}}\gamma_{D}\EE^{i}} = \left\{ \begin{array}{ll} 0 &  x\in\Omega^c\\ \EE^{i} &
 x\in\Omega\end{array}\right.
\end{equation}

We can now define the main boundary integral operators:
\begin{center}$C_{\kappa} = \{\gamma_{D}\}\Psi_{E_{\kappa}} =
\{\gamma_{N_{\kappa}}\}\Psi_{M_{\kappa}}$,\end{center}
\begin{center}$M_{\kappa} = \{\gamma_{D}\}\Psi_{M_{\kappa}} =
\{\gamma_{N_{\kappa}}\}\Psi_{E_{\kappa}}$.
\end{center}
These are bounded operators in 
$\HH_{\times}^{-\frac{1}{2}}(\Div_{\Gamma},\Gamma)$.

As tools, we will need variants of these operators:
\begin{definition}Define the operators $M_{0}$, $C_{\kappa,0}$ and $C^*_{0}$ for $\jj\in \HH_{\times}^{-\frac{1}{2}}(\Div_{\Gamma},\Gamma)$ by :
$$M_{0}\,\jj=-\{\gamma_{D}\}\Rot\Psi_{0},$$
$$C_{\kappa,0}\,\jj=-\kappa\;\nn\times V_{0}\,\jj+\kappa^{-1}\Rot_{\Gamma}V_{0}\Div_{\Gamma}\jj,$$
$$C_{0}^*\,\jj=\nn\times V_{0}\,\jj+\Rot_{\Gamma}V_{0}\Div_{\Gamma}\jj.$$
\end{definition}
Note that $C_{0}^*$ differs from $C_{1,0}$ by the relative sign of the two terms.

We collect now some properties of these boundary integral operators that are known for Lipschitz domains. 

First we note the following useful relations:
\begin{equation}
\label{eqd2}
  \rot_{\Gamma}\nabla_{\Gamma}=0\text{ and }\Div_{\Gamma}\Rot_{\Gamma}=0
\end{equation}
\begin{equation}
\label{eqd3}
 \Div_{\Gamma}(\nn\times \jj)=-\rot_{\Gamma}\jj\text{ and }\rot_{\Gamma}(\nn\times \jj)=\Div_{\Gamma}\jj\end{equation}

The following lemma is proved in \cite{BuMCSc,BuHiPeSc}.
\begin{lemma}\label{3.4}
(i)
The operators $C_{\kappa} - C_{\kappa,0}$ and $M_{\kappa}-M_{0}$ are compact in
$\HH_{\times}^{-\frac{1}{2}}(\Div_{\Gamma},\Gamma)$. \\
(ii) 
Both $C_{\kappa}$ and $M_{\kappa}$ are antisymmetric with respect to the bilinear form $\mathbf{\mathcal{B}}$.
\end{lemma}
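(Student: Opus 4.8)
The plan is to prove the two assertions by independent means: part (i) by separating the static (Laplace) part of each kernel and exploiting the extra smoothness of the remainder, and part (ii) by a second-order Green formula for the operator $\Rot\Rot-\kappa^2\Id$ combined with the jump relations. For part (i), I would first write $C_\kappa$ and $M_\kappa$ in terms of the scalar single layer. Applying the average Dirichlet trace to the potential formulas and using that $\psi_\kappa u$ is continuous across $\Gamma$, so that both its trace and the tangential trace of its gradient (namely $\Rot_\Gamma V_\kappa u$) carry no jump, one finds that $C_\kappa$ is the combination $-\kappa\,\nn\times V_\kappa+\kappa^{-1}\Rot_\Gamma V_\kappa\Div_\Gamma$ and that $M_\kappa$ is the average tangential trace of $\Rot\psi_\kappa$. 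By their definition, $C_{\kappa,0}$ and $M_0$ are the corresponding operators in which the Helmholtz kernel $G(\kappa,\cdot)$ is replaced by the Laplace kernel $G(0,\cdot)$, so that both differences are governed entirely by
\[
 V_\kappa-V_0,\qquad\text{with kernel}\quad G(\kappa,|x-y|)-G(0,|x-y|)=\frac{e^{i\kappa|x-y|}-1}{4\pi|x-y|}
\]
(and, for the $M$-operator, by $\nabla_x$ of this kernel). Since $t\mapsto(e^{i\kappa t}-1)/t$ is analytic at $t=0$, this remainder kernel is bounded and the corresponding operator is smoothing: it maps $H^{-\frac12}(\Gamma)$ into $H^{s}(\Gamma)$ for some $s>\frac12$.

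The real difficulty in (i) is that compactness is demanded in the graph norm of the mixed-order space $\HH_\times^{-\frac12}(\Div_\Gamma,\Gamma)$, so I must control the tangential field and its surface divergence simultaneously. For the field, $V_\kappa-V_0$ applied to $\jj$ and to $\Div_\Gamma\jj\in H^{-\frac12}(\Gamma)$ gains regularity, and the first-order operators $\nn\times$ and $\Rot_\Gamma$ consume only part of this gain, so the output lands in a space compactly embedded in $\HH_\times^{-\frac12}(\Gamma)$. For the divergence I would use \eqref{eqd2} and \eqref{eqd3}: $\Div_\Gamma$ annihilates the $\Rot_\Gamma$-term, while $\Div_\Gamma(\nn\times(V_\kappa-V_0)\jj)=-\rot_\Gamma(V_\kappa-V_0)\jj$ still gains regularity over $H^{-\frac12}(\Gamma)$. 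The compact Sobolev embedding on $\Gamma$ then yields compactness of both differences. The delicate point is the Lipschitz case, where infinite smoothness of the remainder is unavailable and only a finite Sobolev gain holds; one must verify that this gain strictly exceeds the order of the surface derivatives applied afterwards, which is where I would invoke the mapping properties established in \cite{BuMCSc,BuHiPeSc}.

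For part (ii), applying \eqref{IPP2} twice, once to $(\Rot\uu,\vv)$ and once to $(\Rot\vv,\uu)$, and using $\gamma_D\Rot\uu=\kappa\,\gamma_{N_\kappa}\uu$, I obtain the second-order Green formula
\[
 \int_\Omega\big(\Rot\Rot\uu\cdot\vv-\uu\cdot\Rot\Rot\vv\big)\,dx
 =\kappa\big(\mathcal{B}(\gamma_D\vv,\gamma_{N_\kappa}\uu)-\mathcal{B}(\gamma_D\uu,\gamma_{N_\kappa}\vv)\big)
\]
valid for $\uu,\vv\in\HH(\Rot\Rot,\Omega)$. When $\uu,\vv$ solve $(\Rot\Rot-\kappa^2\Id)\,\cdot=0$ the left-hand side vanishes, giving $\mathcal{B}(\gamma_D\vv,\gamma_{N_\kappa}\uu)=\mathcal{B}(\gamma_D\uu,\gamma_{N_\kappa}\vv)$; the same identity holds in $\Omega^c$, the sphere-at-infinity term being killed by the Silver-M\"uller condition exactly as in the proof of Theorem~\ref{t1}. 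I would then feed in the potentials of Lemma~\ref{3.2}, expressing the interior and exterior traces through $C_\kappa$, $M_\kappa$ and $\Id$ by means of the averages $\{\gamma_D\},\{\gamma_{N_\kappa}\}$ and the jump relations. Subtracting the exterior identity from the interior one cancels the quadratic and average terms and leaves the jump contributions: the pair $(\uu,\vv)=(\Psi_{E_\kappa}\jj,\Psi_{E_\kappa}\mm)$ produces $\mathcal{B}(C_\kappa\jj,\mm)=-\mathcal{B}(\jj,C_\kappa\mm)$, while the mixed pair $(\Psi_{E_\kappa}\jj,\Psi_{M_\kappa}\mm)$, in which the quadratic terms drop out, produces $\mathcal{B}(M_\kappa\jj,\mm)=-\mathcal{B}(\jj,M_\kappa\mm)$. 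These are precisely the asserted antisymmetries with respect to the skew form $\mathcal{B}$. The only genuine obstacle is the bookkeeping of the $\pm\tfrac12$ factors coming from the jump relations, which must be arranged so that exactly the unwanted quadratic and identity terms cancel in the chosen combination.
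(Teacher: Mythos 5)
Your proposal is correct in substance, but note that the paper does not actually prove Lemma \ref{3.4}: it quotes it, with the remark ``The following lemma is proved in \cite{BuMCSc,BuHiPeSc}''. Your argument is essentially a blind reconstruction of the standard proofs in those references. For part (i), the reduction to the smoothing kernel $(e^{i\kappa r}-1)/(4\pi r)$, together with the observation that $\Div_{\Gamma}$ annihilates the $\Rot_{\Gamma}$-block and that $\Div_{\Gamma}(\nn\times(V_{\kappa}-V_{0})\jj)=-\rot_{\Gamma}(V_{\kappa}-V_{0})\jj$ retains the regularity gain, is exactly the mechanism used there. The one point you leave genuinely open is the graph-norm control of $\Div_{\Gamma}(M_{\kappa}-M_{0})\jj$, which your sketch does not treat explicitly: here one uses $\Div_{\Gamma}\gamma_{D}\ww=-\nn\cdot\Rot\ww$ and $\Rot\Rot=\nabla\Div-\Delta$ to get $\Div_{\Gamma}M_{\kappa}\jj=-\{\nn\cdot\nabla\psi_{\kappa}\Div_{\Gamma}\jj\}-\kappa^{2}\{\nn\cdot\psi_{\kappa}\jj\}$, after which the difference of the $\kappa$ and $0$ versions again gains enough regularity, even on a Lipschitz surface where only a finite Sobolev gain (two orders, capped by the admissible range $|s|\le1$) is available. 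Deferring this bookkeeping to \cite{BuMCSc,BuHiPeSc} is legitimate given that the paper defers the entire lemma, but it is the only place where real work remains.

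For part (ii) your plan is sound and the $\pm\tfrac12$ bookkeeping does close: with $\gamma_{D}\Psi_{E_{\kappa}}=\gamma_{D}^{c}\Psi_{E_{\kappa}}=-C_{\kappa}$, $\gamma_{N_{\kappa}}\Psi_{E_{\kappa}}=-M_{\kappa}-\tfrac12\Id$, $\gamma_{N_{\kappa}}^{c}\Psi_{E_{\kappa}}=-M_{\kappa}+\tfrac12\Id$ (and the analogous formulas for $\Psi_{M_{\kappa}}$), subtracting the exterior reciprocity identity from the interior one cancels all quadratic terms and yields $\mathcal{B}(C_{\kappa}\jj,\mm)=-\mathcal{B}(\jj,C_{\kappa}\mm)$ from the pair $(\Psi_{E_{\kappa}}\jj,\Psi_{E_{\kappa}}\mm)$ and $\mathcal{B}(M_{\kappa}\jj,\mm)=-\mathcal{B}(\jj,M_{\kappa}\mm)$ from the mixed pair, which is precisely the sense in which the paper uses antisymmetry later (proof of Theorem \ref{ThLip}). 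Two small blemishes, neither fatal: the sphere term at infinity vanishes not ``exactly as in the proof of Theorem \ref{t1}'' (that is an energy-flux argument for a single field) but by the reciprocity cancellation $i\kappa(\uu\cdot\vv-\vv\cdot\uu)=0$ that follows from imposing Silver--M\"uller on both fields and the uniform $\LL^{2}(\partial B_{R})$ bounds; and your Green formula needs $\uu,\vv\in\HH(\Rot,\Omega)\cap\HH(\Rot\Rot,\Omega)$, since the Neumann trace of Lemma \ref{2.3} requires both, not merely $\Rot\Rot\uu\in\LL^{2}$.
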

The  Calder\'on projectors for the time-harmonic Maxwell system \eqref{(*)} are $P = \frac{1}{2}\Id +
A_{\kappa}$ and $P^c = \frac{1}{2}\Id - A_{\kappa}$ where 
\begin{displaymath}
A = \left(\begin{array}{ll} M_{\kappa} \ C_{\kappa} \\ C_{\kappa} \ M_{\kappa} \\ \end{array}\right ).
\end{displaymath}
We have $P\circ P^c = 0$ and therefore 
\begin{equation}
\label{Ck2}
  C_{\kappa}^2 = \tfrac{1}{4}\Id - M_{\kappa}^2\,\text{ and }\,C_{\kappa}M_{\kappa}=-M_{\kappa}C_{\kappa}.
\end{equation}
It is a classical result that  when the boundary $\Gamma$ is smooth, the operator $M_{\kappa}$ is compact from $\HH_{\times}^{-\frac{1}{2}}(\Div_{\Gamma},\Gamma)$ to
itself (see \cite{Ned}). From identity \eqref{Ck2} one can then immediately deduce that the ``electric'' operator $C_{\kappa}$ is  Fredholm of index zero. 
The latter result is also true for a Lipschitz boundary $\Gamma$ (see \cite{BuHiPeSc} for more details). Later on, we need the corresponding result for the ``magnetic'' operator $\tfrac12\Id + M_{\kappa}$. This has been proved for Lipschitz domains in \cite[Thm.~4.8]{MiMi02} and in \cite[Thm.~3.2]{Wind}:
\begin{lemma}\label{lMk}
 The operator $\tfrac12\Id + M_{\kappa} : \HH_{\times}^{-\frac{1}{2}}(\Div_{\Gamma},\Gamma)
 \to \HH_{\times}^{-\frac{1}{2}}(\Div_{\Gamma},\Gamma)$
 is Fredholm of index zero.
\end{lemma}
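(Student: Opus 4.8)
The plan is to derive the statement from the algebraic identities in \eqref{Ck2} together with the already established fact that $C_{\kappa}$ is Fredholm of index zero on $\HH_{\times}^{-\frac{1}{2}}(\Div_{\Gamma},\Gamma)$. Write $A=\tfrac12\Id-M_{\kappa}$ and $B=\tfrac12\Id+M_{\kappa}$; so the target operator is $B$. Since $A$ and $B$ are commuting polynomials in $M_{\kappa}$, the first relation in \eqref{Ck2} gives $AB=BA=\tfrac14\Id-M_{\kappa}^2=C_{\kappa}^2$. Because $C_{\kappa}$ is Fredholm of index zero, so is its square $C_{\kappa}^2$, and hence both products $AB$ and $BA$ are Fredholm of index zero.

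First I would show that each factor is separately Fredholm, using that we know the products in \emph{both} orders are Fredholm. From $AB$ Fredholm one has $\ker B\subseteq\ker(AB)$, so $\ker B$ is finite-dimensional, and $\operatorname{ran}A\supseteq\operatorname{ran}(AB)$; as $\operatorname{ran}(AB)$ is closed of finite codimension, the larger subspace $\operatorname{ran}A$ is itself closed of finite codimension. Applying the same reasoning to $BA$ shows that $\ker A$ is finite-dimensional and $\operatorname{ran}B$ is closed of finite codimension. Thus both $A$ and $B$ are Fredholm, and multiplicativity of the index gives $\operatorname{ind}A+\operatorname{ind}B=\operatorname{ind}(AB)=0$.

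To pin down the individual indices I would exploit the anticommutation relation $C_{\kappa}M_{\kappa}=-M_{\kappa}C_{\kappa}$ from \eqref{Ck2}: it yields $C_{\kappa}B=\tfrac12 C_{\kappa}+C_{\kappa}M_{\kappa}=\tfrac12 C_{\kappa}-M_{\kappa}C_{\kappa}=A\,C_{\kappa}$. Since $C_{\kappa}$, $A$ and $B$ are all Fredholm, comparing the indices of the two \emph{equal} operators $C_{\kappa}B$ and $A\,C_{\kappa}$ gives $\operatorname{ind}C_{\kappa}+\operatorname{ind}B=\operatorname{ind}A+\operatorname{ind}C_{\kappa}$, hence $\operatorname{ind}B=\operatorname{ind}A$. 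Combined with $\operatorname{ind}A+\operatorname{ind}B=0$ this forces $\operatorname{ind}B=0$, which is the assertion. Note that this argument uses only the relations in \eqref{Ck2} and the Fredholm property of $C_{\kappa}$, and not the antisymmetry of Lemma~\ref{3.4}(ii).

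The genuinely delicate point is not the Fredholmness but the index computation. A factorization of a Fredholm operator into a product does not by itself make the factors Fredholm, so the two-sided information $AB=BA=C_{\kappa}^2$ (available here precisely because $A$ and $B$ commute), followed by the closed-range argument above, is essential for the first step. For the index, the factorization alone only gives $\operatorname{ind}A+\operatorname{ind}B=0$, which is consistent with any pair of opposite indices; the extra input that closes the gap is the anticommutation relation $C_{\kappa}M_{\kappa}=-M_{\kappa}C_{\kappa}$, which I expect to be the crux of the proof.
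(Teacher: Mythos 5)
Your argument is correct, but it follows a genuinely different route from the paper: the paper gives no proof at all for this lemma, quoting it instead from Mitrea--Mitrea \cite[Thm.~4.8]{MiMi02} and Steinbach--Windisch \cite[Thm.~3.2]{Wind}, where it is established by harmonic-analysis methods on Lipschitz domains. You instead derive it algebraically from the Calder\'on relations \eqref{Ck2} together with the previously quoted fact that $C_{\kappa}$ is Fredholm of index zero on $\HH_{\times}^{-\frac{1}{2}}(\Div_{\Gamma},\Gamma)$ for Lipschitz $\Gamma$ (from \cite{BuHiPeSc}). All the steps check out: with $A=\tfrac12\Id-M_{\kappa}$, $B=\tfrac12\Id+M_{\kappa}$ one has $AB=BA=C_{\kappa}^{2}$, and your two-sided argument (finite-dimensional kernels from $\ker B\subseteq\ker(AB)$, $\ker A\subseteq\ker(BA)$; closed finite-codimensional ranges because a subspace containing a closed finite-codimensional subspace is itself closed of finite codimension) legitimately yields Fredholmness of each factor, which a one-sided factorization would not. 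Your identification of the crux is also accurate: the $\mathcal{B}$-antisymmetry of Lemma~\ref{3.4}(ii) would only reproduce $\operatorname{ind}A=-\operatorname{ind}B$, i.e.\ the same relation as index additivity, whereas the anticommutation $C_{\kappa}M_{\kappa}=-M_{\kappa}C_{\kappa}$ gives the intertwining $C_{\kappa}B=AC_{\kappa}$ and hence $\operatorname{ind}B=\operatorname{ind}A$, pinning both indices to zero. As for what each approach buys: yours is self-contained within the paper's framework and elegantly elementary, though it is not independent of deep results --- it trades the deep input behind the lemma (MFIE Fredholmness from \cite{MiMi02,Wind}) for the deep input behind the EFIE result (the generalized G\r{a}rding inequality of \cite{BuHiPeSc}); the cited theorems, on the other hand, deliver strictly more, e.g.\ that $\tfrac12\Id+M_{\kappa}$ is actually an isomorphism for $\Im\kappa>0$, a fact the paper mentions but which your index argument cannot produce (nor does the lemma require it).
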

In fact, for $\Im\kappa>0$ this operator is an isomorphism. Later on, we use the result for $\kappa=0$, where we don't know whether it is an isomorphism. But we only need that is an isomorphism up to a compact perturbation, that is, Fredholm of index zero, so we will not pursue this further. 

The following theorem was proved in \cite[Thm.~2.6]{Wind}.
\begin{lemma} \label{elliptic} The operator $ C_{0}^*$ is self-adjoint and elliptic for the bilinear form $\mathbf{\mathcal{B}}$  and invertible on $\HH^{-\frac{1}{2}}_{\times}(\Div_{\Gamma},\Gamma)$. Ellipticity means here that there exists a positive constant $\alpha$ such that for all $\jj\in\HH_{\times}^{-\frac{1}{2}}(\Div_{\Gamma},\Gamma)$
$$
  \mathbf{\mathcal{B}}\big(C_{0}^*\jj,\overline{\jj}\big) \ge \alpha ||\jj||_{\HH_{\times}^{-\frac{1}{2}}(\Div_{\Gamma},\Gamma)}^{2}\,.
$$ 
\end{lemma}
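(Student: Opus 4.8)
The plan is to reduce the sesquilinear form $\mathcal{B}(C_0^*\jj,\overline{\jj})$ to a sum of two quadratic forms of the scalar single layer operator $V_0$ of the Laplacian, one acting on the vector density $\jj$ and one on its surface divergence $\Div_\Gamma\jj$, and then to invoke the classical positive definiteness of $V_0$ on $H^{-\frac{1}{2}}(\Gamma)$. The crucial structural observation is that the relative sign between the two terms of $C_0^*$ — the sign that distinguishes it from $C_{1,0}$ — is precisely the one for which both resulting quadratic forms carry the same sign, so that their sum controls the full graph norm of $\HH_{\times}^{-\frac{1}{2}}(\Div_{\Gamma},\Gamma)$.

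Concretely, I would treat the two terms of $C_0^*\jj = \nn\times V_0\jj + \Rot_\Gamma V_0\Div_\Gamma\jj$ separately when paired against $\overline{\jj}\times\nn$ in $\mathcal{B}$. For the first term, the elementary vector identity $(\nn\times\ww)\cdot(\overline{\jj}\times\nn) = (\nn\cdot\overline{\jj})(\ww\cdot\nn) - \ww\cdot\overline{\jj}$ together with the tangentiality $\nn\cdot\overline{\jj}=0$ collapses the pairing to $\langle V_0\jj,\overline{\jj}\rangle$, where $V_0$ acts componentwise and $\langle\cdot,\cdot\rangle$ denotes the $\LL^2(\Gamma)$ duality. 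For the second term I would integrate by parts, moving $\Rot_\Gamma$ onto the test field through the adjoint relation between $\Rot_\Gamma$ and $\rot_\Gamma$ (no boundary term on the closed surface $\Gamma$) and then using $\rot_\Gamma(\overline{\jj}\times\nn) = -\Div_\Gamma\overline{\jj}$, which follows from \eqref{eqd3}; this collapses the pairing to $\langle V_0\Div_\Gamma\jj,\overline{\Div_\Gamma\jj}\rangle$. No cross terms appear, since each term of $C_0^*$ pairs diagonally, so collecting the two contributions gives, up to the overall sign fixed by the conventions for $\mathcal{B}$ and the surface operators,
$$\mathcal{B}(C_0^*\jj,\overline{\jj}) = \langle V_0\jj,\overline{\jj}\rangle + \langle V_0\Div_\Gamma\jj,\overline{\Div_\Gamma\jj}\rangle .$$

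Self-adjointness for $\mathcal{B}$ would follow from the same computation with a general test field $\mm$ in place of $\overline{\jj}$: one obtains $\mathcal{B}(C_0^*\jj,\mm) = \langle V_0\jj,\mm\rangle + \langle V_0\Div_\Gamma\jj,\Div_\Gamma\mm\rangle$, which is manifestly symmetric in $(\jj,\mm)$ because the kernel $G(0,|x-y|)$ of $V_0$ is symmetric, so that $\langle V_0 u,v\rangle = \langle u,V_0 v\rangle$. For ellipticity I would then invoke the classical coercivity of the Laplace single layer operator, namely $\langle V_0 u,\overline{u}\rangle \geq c\,\|u\|_{H^{-\frac{1}{2}}(\Gamma)}^2$ with $c>0$: applied componentwise to the first term it is bounded below by $c\,\|\jj\|^2_{\HH_{\times}^{-\frac{1}{2}}(\Gamma)}$, and applied to the scalar $\Div_\Gamma\jj$ in the second term it is bounded below by $c\,\|\Div_\Gamma\jj\|^2_{H^{-\frac{1}{2}}(\Gamma)}$. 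Adding the two lower bounds reproduces exactly the graph norm of $\HH_{\times}^{-\frac{1}{2}}(\Div_{\Gamma},\Gamma)$, giving the claimed estimate with $\alpha=c$.

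Finally, invertibility would follow from ellipticity by Lax–Milgram: the form $(\jj,\mm)\mapsto\mathcal{B}(C_0^*\jj,\overline{\mm})$ is bounded (since $C_0^*$ is bounded and $\mathcal{B}$ continuous) and coercive by the estimate just obtained, and the duality $\mathcal{B}$ is non-degenerate, so $C_0^*$ is an isomorphism of $\HH_{\times}^{-\frac{1}{2}}(\Div_{\Gamma},\Gamma)$ onto itself. I expect the main obstacle to be the single layer coercivity step: one must use that $V_0$ is the $\kappa=0$ (Laplacian) single layer operator, whose positive definiteness on $H^{-\frac{1}{2}}(\Gamma)$ in $\R^3$ is the essential classical input, and one must verify that the two quadratic forms genuinely separate the control of $\|\jj\|_{\HH_{\times}^{-\frac{1}{2}}(\Gamma)}$ from that of $\|\Div_\Gamma\jj\|_{H^{-\frac{1}{2}}(\Gamma)}$, so that together they dominate the full graph norm. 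Keeping the sign conventions consistent through the surface integration by parts is the other delicate point.
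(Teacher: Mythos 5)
Your proposal is correct and takes essentially the same route as the paper: the paper's own proof (with details deferred to Steinbach--Windisch \cite[Thm.~2.6]{Wind}) consists precisely of the identity $\mathcal{B}(\jj,C_{0}^*\,\overline{\jj})=\int_\Gamma\big\{\jj\cdot V_0\overline{\jj}+\Div_{\Gamma}\jj\,V_0\Div_{\Gamma}\overline{\jj}\big\}$ followed by the $H^{-\frac{1}{2}}(\Gamma)$-ellipticity of the scalar single layer operator $V_0$, which is exactly your computation, including the key observation that the relative sign inside $C_0^*$ makes both quadratic forms carry the same sign so that their sum controls the graph norm. One bookkeeping remark: with the paper's literal conventions both terms come out positive when the arguments are ordered as $\mathcal{B}(\jj,C_{0}^*\,\overline{\jj})$ (the order used in the paper's displayed identity), whereas your pairing $\mathcal{B}(C_{0}^*\jj,\overline{\jj})$ yields both terms with a common minus sign by the skew-symmetry of $\mathcal{B}$ --- consistent with your ``up to the overall sign'' caveat, and a looseness already present in the paper itself.
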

Indeed, for $\jj\in \HH^{-\frac{1}{2}}_{\times}(\Div_{\Gamma},\Gamma)$ we have
\begin{equation}
  \mathbf{\mathcal{B}}(\jj,C_{0}^*\,\overline{\jj}) = 
  \int_\Gamma \left\{\jj\cdot V_0\,\overline{\jj}
  + \Div_{\Gamma}\jj \, V_0 \Div_{\Gamma}\overline{\jj} \right\}
\end{equation}
and the result follows from the $H^{-\frac12}(\Gamma)$-ellipticity of the scalar single layer potential operator $V_0$.

 \section{Integral equations 1}\label{IE1}
 
 In this section, we present the first method for solving the dielectric problem, following the procedure of R. E. Kleinman and P. A. Martin \cite{KlMa}: We use  a layer ansatz on the exterior field to construct two  alternative boundary integral equations. \newline
  In the scalar case, one represents the exterior field as a linear combination of a single layer potential and a double layer potential, both generated by the same density. It turns out that this simple idea does not suffice in the electromagnetic case if one wants to avoid irregular frequencies.  
Our approach is related to the idea of ``modified combined field integral equations'': We compose one of the electromagnetic potential operators with an elliptic and invertible boundary integral operator, namely $C_{0}^*$. 
More precisely, we assume that $\EE^{s}$ admits the following integral representation  :
\begin{equation} 
\label{u2c} 
  \EE^{s}(x) =-a (\Psi_{E_{\kappa_{e}}}\jj)(x) -b(\Psi_{M_{\kappa_{e}}}  C_{0}^*\,\jj)(x)\qquad
  \text{ for }x\in\Omega^c \,.
\end{equation}
Here  $\jj\in\HH_{\times}^{-\frac{1}{2}}(\Div_{\Gamma},\Gamma)$ is the unknown density and $a$ and $b$ are arbitrary complex constants. 

We set $\rho = \dfrac{\mu_{e}\kappa_{i}}{\mu_{i}\kappa_{e}}$. The transmission conditions can be rewritten :
$$\gamma_{D}\EE^{i}=\gamma_{D}^c\EE^{s}+\gamma_{D}\EE^{inc}\textrm{ and }\gamma_{N_{\kappa_{i}}}\EE^{i}=\rho^{-1}\left(\gamma_{N_{\kappa_{e}}}^c\EE^{s}+\gamma_{N_{\kappa_{e}}}\EE^{inc}\right).
$$ 
Using this in the integral representation formula \eqref{(3.3)} in $\Omega$,  we get:
\begin{equation}
\label{u1}  \EE^{i} =
-\frac{1}{\rho}\Psi_{E_{\kappa_{i}}}\left(\gamma_{N_{\kappa_{e}}}^c\EE^{s}+\gamma_{N_{\kappa_{e}}}\EE^{inc}\right) -
\Psi_{M_{\kappa_{i}}}\left(\gamma_{D}^c\EE^{s}+\gamma_{D}\EE^{inc}\right)\text{ in }\Omega.
\end{equation}

We take traces in \eqref{u2c} and obtain the Calder\'on relations
\begin{eqnarray}
\label{Du2}  
\gamma_{D}^c\EE^{s} =& \left\{aC_{\kappa_{e}}-b\left(\tfrac{1}{2}\Id-M_{\kappa_{e}}\right)C_{0}^*\right\}\jj& \equiv
L_{e}\jj\text{ on }\Gamma,\\
\label{Nu2}
\gamma_{N_{\kappa_{e}}}^c\EE^{s} =&
\left\{-a\left(\tfrac{1}{2}\Id-M_{\kappa_{e}}\right)+bC_{\kappa_{e}}C_{0}^*\right\}\jj& \equiv N_{e}\jj\text{ on }\Gamma.
\end{eqnarray}

On the other hand, taking traces in \eqref{u1} gives: 
\begin{eqnarray}\label{eq1} \rho\left(-\frac{1}{2}\Id + M_{\kappa_{i}}\right)\left(\gamma_{D}^c\EE^{s}+\gamma_{D}\EE^{inc}\right) +
C_{\kappa_{i}}\left(\gamma_{N_{\kappa_{e}}}^c\EE^{s}+\gamma_{N_{\kappa_{e}}}\EE^{inc}\right)= 0\text{ on }\Gamma ,\\\label{eq2}
\left(-\frac{1}{2}\Id + M_{\kappa_{i}}\right)\left(\gamma_{N_{\kappa_{e}}}^c\EE^{s}+\gamma_{N_{\kappa_{e}}}\EE^{inc}\right) +
\rho C_{\kappa_{i}}\left(\gamma_{D}^c\EE^{s}+\gamma_{D}\EE^{inc}\right)  = 0\text{ on }\Gamma.\end{eqnarray}

 We can now substitute \eqref{Du2} and \eqref{Nu2} into \eqref{eq1} and get our first integral equation:
\begin{eqnarray}\label{S}
 \SS \jj \equiv \rho\left(-\frac{1}{2}\Id + M_{\kappa_{i}}\right)L_{e}\jj + C_{\kappa_{i}}N_{e}\jj = f \\ 
 \text{ where } 
  f = -\rho\left(-\frac{1}{2}\Id + M_{\kappa_{i}}\right)\gamma_{D} \EE^{inc} -
C_{\kappa_{i}}\gamma_{N_{\kappa_{e}}} \EE^{inc}.
\end{eqnarray}

 If we substitute \eqref{Du2} and \eqref{Nu2} into \eqref{eq2}, we get our second integral equation:
\begin{eqnarray}\label{T} \TT \jj \equiv \rho C_{\kappa_{i}}L_{e}\jj + \left(-\frac{1}{2}\Id + M_{\kappa_{i}}\right)N_{e}\jj = g\\ \text{ where }
 g = -\rho C_{\kappa_{i}}\gamma_{D}\EE^{inc} -
\left(-\frac{1}{2}\Id + M_{\kappa_{i}}\right)\gamma_{N_{\kappa_{e}}}\EE^{inc}.\end{eqnarray}

Thus we obtain two boundary integral equations for the unknown $\jj$. Having solved either one, we construct $\EE^{s}$ using \eqref{u2c} and $\EE^{i}$ using \eqref{u1}, \eqref{Du2}, \eqref{Nu2}:
\begin{equation} \label{u12} 
\EE^{i} =-\frac{1}{\rho}\left(\Psi_{E_{\kappa_{i}}}\{\gamma_{N_{\kappa_{e}}}\EE^{inc} +N_{e}\jj\}\right) -
\left(\Psi_{M_{\kappa_{i}}}\{\gamma_{D}\EE^{inc} + L_{e}\jj\}\right).
\end{equation}

\begin{theorem}
\label{solpt} 
 If $\jj \in \HH_{\times}^{-\frac{1}{2}}(\Div_{\Gamma},\Gamma)$ solves \eqref{S} or \eqref{T}, then $\EE^{s}$ and $\EE^{i}$ given by \eqref{u2c} and \eqref{u12} solve the transmission problem.\end{theorem}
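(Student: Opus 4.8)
The plan is to verify separately the three ingredients of the dielectric scattering problem, observing that only the transmission conditions \eqref{T1}--\eqref{T2} require real work. Since the fields $\EE^{s}$ in \eqref{u2c} and $\EE^{i}$ in \eqref{u12} are assembled purely from the electric and magnetic potentials $\Psi_{E_{\kappa}}$ and $\Psi_{M_{\kappa}}$, Lemma~\ref{3.2} delivers at once $(\Rot\Rot-\kappa_{e}^{2}\Id)\EE^{s}=0$ in $\Omega^c$, $(\Rot\Rot-\kappa_{i}^{2}\Id)\EE^{i}=0$ in $\Omega$, the required membership in $\HH(\Rot,\Omega)$ and $\HH_{\loc}(\Rot,\overline{\Omega^c})$, and the Silver--M\"uller condition for $\EE^{s}$ — as well as for $\EE^{i}$ in $\Omega^c$, which will be used below. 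I would first rewrite the transmission conditions in trace form as $\gamma_{D}\EE^{i}=\gamma_{D}^c\EE^{s}+\gamma_{D}\EE^{inc}$ and $\gamma_{N_{\kappa_{i}}}\EE^{i}=\rho^{-1}(\gamma_{N_{\kappa_{e}}}^c\EE^{s}+\gamma_{N_{\kappa_{e}}}\EE^{inc})$, so that the whole theorem reduces to matching the interior traces of $\EE^{i}$.

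Next I would name the two densities occurring in \eqref{u12}: set $\ww=\gamma_{D}\EE^{inc}+L_{e}\jj$ and $\qq=\gamma_{N_{\kappa_{e}}}\EE^{inc}+N_{e}\jj$, so that $\EE^{i}=-\rho^{-1}\Psi_{E_{\kappa_{i}}}\qq-\Psi_{M_{\kappa_{i}}}\ww$. The relations \eqref{Du2}--\eqref{Nu2}, obtained by taking exterior traces of the ansatz \eqref{u2c} through the jump relations, read $\gamma_{D}^c\EE^{s}=L_{e}\jj$ and $\gamma_{N_{\kappa_{e}}}^c\EE^{s}=N_{e}\jj$; hence $\ww$ and $\rho^{-1}\qq$ are exactly the right-hand sides of the two transmission conditions, and it remains to show $\gamma_{D}\EE^{i}=\ww$ and $\gamma_{N_{\kappa_{i}}}\EE^{i}=\rho^{-1}\qq$. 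Applying the jump operators $[\gamma_{D}]$ and $[\gamma_{N_{\kappa_{i}}}]$ to \eqref{u12} and using $[\gamma_{D}]\Psi_{E_{\kappa_{i}}}=0$, $[\gamma_{N_{\kappa_{i}}}]\Psi_{E_{\kappa_{i}}}=-\Id$, $[\gamma_{D}]\Psi_{M_{\kappa_{i}}}=-\Id$, $[\gamma_{N_{\kappa_{i}}}]\Psi_{M_{\kappa_{i}}}=0$ gives, with no further hypothesis, the jump identities $[\gamma_{D}]\EE^{i}=\ww$ and $[\gamma_{N_{\kappa_{i}}}]\EE^{i}=\rho^{-1}\qq$. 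Thus the two desired interior-trace identities are equivalent to the vanishing of the two exterior traces $\gamma_{D}^c\EE^{i}$ and $\gamma_{N_{\kappa_{i}}}^c\EE^{i}$.

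The crux is to connect these exterior traces to the integral equations themselves. Computing instead the exterior traces of \eqref{u12} via the Calder\'on relations for $\gamma_{D}^c\Psi_{E_{\kappa_{i}}}$, $\gamma_{D}^c\Psi_{M_{\kappa_{i}}}$, $\gamma_{N_{\kappa_{i}}}^c\Psi_{E_{\kappa_{i}}}$, $\gamma_{N_{\kappa_{i}}}^c\Psi_{M_{\kappa_{i}}}$, substituting $\ww=L_{e}\jj+\gamma_{D}\EE^{inc}$, $\qq=N_{e}\jj+\gamma_{N_{\kappa_{e}}}\EE^{inc}$, and recognising the resulting operator combinations as $\SS$ and $\TT$, I expect the clean pair of identities $\SS\jj-f=\rho\,\gamma_{D}^c\EE^{i}$ and $\TT\jj-g=\rho\,\gamma_{N_{\kappa_{i}}}^c\EE^{i}$. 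Consequently, solving \eqref{S} forces $\gamma_{D}^c\EE^{i}=0$, whereas solving \eqref{T} forces $\gamma_{N_{\kappa_{i}}}^c\EE^{i}=0$.

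The main obstacle is the last step: upgrading the vanishing of one exterior trace to the vanishing of both, which is what lets a single equation enforce the full pair of transmission conditions. Here I would use that the restriction of $\EE^{i}$ to $\Omega^c$ is a radiating Maxwell field of wave number $\kappa_{i}$, together with exterior uniqueness — a radiating field for which either $\gamma_{D}^c\EE^{i}=0$ or $\gamma_{N_{\kappa_{i}}}^c\EE^{i}=0$ must vanish identically in $\Omega^c$. This is the exterior analogue of Theorem~\ref{t1}, proved by the same integration-by-parts-and-Rellich argument, the boundary term over $\Gamma$ dropping out as soon as one Cauchy trace vanishes, and it is precisely where one uses that $\kappa_{i}$ admits no exterior resonances (guaranteed by $\Im\kappa_{i}\ge 0$). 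Therefore, whether $\jj$ solves \eqref{S} or \eqref{T}, one gets $\EE^{i}\equiv 0$ in $\Omega^c$, hence both $\gamma_{D}^c\EE^{i}=0$ and $\gamma_{N_{\kappa_{i}}}^c\EE^{i}=0$; combined with the jump identities above this yields $\gamma_{D}\EE^{i}=\ww$ and $\gamma_{N_{\kappa_{i}}}\EE^{i}=\rho^{-1}\qq$, i.e. both transmission conditions, which completes the verification.
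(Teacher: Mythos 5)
Your proposal is correct and follows essentially the same route as the paper: your identities $\SS\jj-f=\rho\,\gamma_{D}^c\EE^{i}$ and $\TT\jj-g=\rho\,\gamma_{N_{\kappa_{i}}}^c\EE^{i}$ are exactly the paper's relations \eqref{w}--\eqref{ww} for the field $\uu=\rho\,\EE^{i}$ extended to $\Omega^c$, and your final step — using exterior uniqueness for the radiating Maxwell field of wave number $\kappa_{i}$ to upgrade the vanishing of one exterior Cauchy trace to the vanishing of both — is precisely the paper's argument showing the equivalence of \eqref{S} and \eqref{T}. Your reduction via the jump relations is a harmless repackaging of the paper's ``simple computation,'' so no gap remains.
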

 
 \begin{proof} We know that $\EE^{i}$ and $\EE^{s}$ satisfy the Maxwell equations and the Silver-M\"uller condition. It remains to verify that  $\EE^{s}$ and  $\EE^{i}$ satisfy the transmission conditions \eqref{T1} and \eqref{T2}. Using the integral representation \eqref{u2c} and \eqref{u1} of $\EE^{s}$ and $\EE^{i}$, a simple computation gives:
\begin{equation} 
\label{w} 
 \begin{array}{ll} \vspace{1mm}\;\;\;\rho(\gamma_{D}^c \EE^{s}+ \gamma_{D}\EE^{inc} -
 \gamma_{D}\EE^{i}) =\SS\jj-f
 \end{array}
 \end{equation}
 and
\begin{equation}
\label{ww} 
 \gamma_{N_{\kappa_{e}}}^c\EE^{s} + \gamma_{N_{\kappa_{e}}}\EE^{inc} -
 \rho\gamma_{N_{\kappa_{i}}}\EE^{i} = \TT\jj - g
\end{equation}
 
 We deduce that\\
  - if $\jj$ solves \eqref{S}, then relation \eqref{w} proves that  the condition \eqref{T1}  is
 satisfied,\\
 - if $\jj$ solves \eqref{T}, then relation \eqref{ww} proves that the condition \eqref{T2} is
 satisfied. 
 
Now we show that \eqref{S} and \eqref{T} are in fact equivalent. 
Define :\begin{center}$\uu(x) =
 -\Psi_{E_{\kappa_{i}}}\{\gamma_{N_{\kappa_{e}}}\EE^{inc} + N_{e}\jj\}(x) -
 \rho\Psi_{M_{\kappa_{i}}}\{\gamma_{D}\EE^{inc} + L_{e}\jj\}(x) \quad \textrm{ for }
 x\in\Omega^c$.\end{center}
  This field $\uu$ is in $\HH_{\loc}(\Rot,\overline{\Omega^c})$ and satisfies the Maxwell system
 
 \begin{equation}\label{meki}\Rot\Rot\uu-\kappa_{i}^2\uu=0\end{equation} in $\Omega^c$. On the boundary $\Gamma$ we have:
$$\gamma_{D}^c\uu = \SS\jj-f\quad\text{ and }\quad \gamma_{N_{\kappa_{i}}}^c\uu=\TT\jj-g.
$$
 Since $\uu$ solves \eqref{meki} in $\Omega^c$ and satisfies the
 Silver-M\"uller condition, it follows:
 $$
 \jj\text{ satisfies }\eqref{S}\Rightarrow\gamma_{D}^c\uu = 0\Rightarrow \uu\equiv0
 \text{ in }\overline{\Omega}^c\Rightarrow\gamma_{N_{\kappa_{i}}}^c\uu=0\Rightarrow \jj \text{ satisfies }\eqref{T}.$$
 $$\jj\text{ satisfies }\eqref{T}\Rightarrow\gamma_{N_{\kappa_{i}}}^c\uu = 0\Rightarrow \uu\equiv0
 \text{ in }\Omega^c\Rightarrow\gamma_{D}^c\uu=0\Rightarrow \jj \text{ satisfies }\eqref{S}.
$$ 
As a consequence, if $\jj$ solves one of the two integral equations, it solves both, and then both transmission conditions \eqref{T1} and \eqref{T2} are satisfied. 
\end{proof}

  The next theorem is concerned with the uniqueness of the solutions of the boundary  integral equations \eqref{S}
and \eqref{T}, i.e., with the existence of nontrivial solutions of the following homogeneous forms of \eqref{S}
and \eqref{T}:
\begin{eqnarray}\label{S0}\rho\left(-\tfrac{1}{2}\Id +
M_{\kappa_{i}}\right)L_{e}\jj_{0} + C_{\kappa_{i}}N_{e}\jj_{0} = 0,\\\label{T0}
 \rho C_{\kappa_{i}}L_{e}\jj_{0} + \left(-\frac{1}{2}\Id + M_{\kappa_{i}}\right)N_{e}\jj_{0}
= 0.\end{eqnarray}

 As in the scalar case \cite{KlMa}, we associate with the dielectric scattering problem a new interior boundary value problem, the eigenvalues of which determine uniqueness for the integral equations.
 
\medskip 

\noindent
\textbf{Associated interior problem:} 
For $a,b\in\C$, consider the boundary value problem
\begin{equation}
\label{intp} 
 \Rot\Rot \uu - \kappa_{e}^{2}\uu = 0 \quad\text{ in } \Omega, 
 \qquad 
 a\gamma_{D}\uu - bC_{0}^*\gamma_{N_{e}}\uu = 0 \quad\text{ on }\Gamma. 
\end{equation}

\begin{lemma} 
\label{lem}.
Let $a,b\in\C\setminus\{0\}$ and let $\kappa_{e}\in\C$. Assume that
\begin{itemize}
\item[(i)] $\Im\left(\dfrac{a}{b}\right)\not=0\,$ if $\kappa_{e}\in\R$,
 \item[(ii)] $\Im(\kappa_{e}^2)\cdot\Im\left(\kappa_{e}\dfrac{a}{b}\right)>0\,$ if $\kappa_{e}\in\C\backslash\R$,\end{itemize}
  then $\kappa_{e}^2$ is not an eigenvalue of the  the associated interior problem \eqref{intp}.
\end{lemma}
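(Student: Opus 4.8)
The plan is to base the proof on a Green (energy) identity for the $\Rot\Rot$ operator combined with the self-adjointness and ellipticity of $C_{0}^*$. Suppose, for contradiction, that $\uu$ solves \eqref{intp} for the eigenvalue $\kappa_{e}^2$, and abbreviate $\mm=\gamma_{D}\uu$ and $\jj=\gamma_{N_{\kappa_{e}}}\uu$, both lying in $\HH_{\times}^{-\frac{1}{2}}(\Div_{\Gamma},\Gamma)$. Applying the integration-by-parts formula \eqref{IPP2} of Lemma~\ref{2.3} to the pair $(\Rot\uu,\overline{\uu})$, using the interior equation $\Rot\Rot\uu=\kappa_{e}^2\uu$ and the fact that $\gamma_{D}\Rot\uu=\kappa_{e}\gamma_{N_{\kappa_{e}}}\uu=\kappa_{e}\jj$, I would obtain
\[
\kappa_{e}^2\int_\Omega|\uu|^2\,dx-\int_\Omega|\Rot\uu|^2\,dx=\kappa_{e}\,\mathcal{B}(\overline{\mm},\jj).
\]
Everything then reduces to rewriting the boundary term $\mathcal{B}(\overline{\mm},\jj)$ as a definite quadratic form in $\jj$.

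For this I would feed in the boundary condition. Since $C_{0}^*$ is invertible by Lemma~\ref{elliptic} and $a\neq0$, the condition reads $\mm=\tfrac{b}{a}C_{0}^*\jj$; and because $C_{0}^*$ is built from the real operators $\nn\times$, $V_{0}$, $\Rot_{\Gamma}$, $\Div_{\Gamma}$ it commutes with conjugation, so $\overline{\mm}=\tfrac{\overline{b}}{\overline{a}}C_{0}^*\overline{\jj}$. Combining the self-adjointness of $C_{0}^*$ for $\mathcal{B}$ with the skew-symmetry of $\mathcal{B}$ gives $\mathcal{B}(C_{0}^*\overline{\jj},\jj)=-\mathcal{B}(C_{0}^*\jj,\overline{\jj})$, hence
\[
\mathcal{B}(\overline{\mm},\jj)=-\frac{\overline{b}}{\overline{a}}\,Q(\jj),\qquad Q(\jj):=\mathcal{B}(C_{0}^*\jj,\overline{\jj}).
\]
By Lemma~\ref{elliptic} the number $Q(\jj)$ is real and satisfies $Q(\jj)\ge\alpha\|\jj\|^2_{\HH_{\times}^{-\frac{1}{2}}(\Div_{\Gamma},\Gamma)}\ge0$, so the energy identity becomes
\[
\kappa_{e}^2\int_\Omega|\uu|^2\,dx-\int_\Omega|\Rot\uu|^2\,dx=-\kappa_{e}\frac{\overline{b}}{\overline{a}}\,Q(\jj).
\]

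The final step is to take imaginary parts and split according to the two hypotheses; the only algebraic input needed is $\tfrac{\overline{b}}{\overline{a}}=|b/a|^2\,\tfrac{a}{b}$, which shows that $\Im\big(\kappa_{e}\tfrac{\overline{b}}{\overline{a}}\big)$ has the same sign as $\Im\big(\kappa_{e}\tfrac{a}{b}\big)$ and that $\Im\big(\tfrac{\overline{b}}{\overline{a}}\big)=-\Im\big(\tfrac{a}{b}\big)$. If $\kappa_{e}\in\R$ (hence $\kappa_{e}\neq0$, as the trace $\gamma_{N_{\kappa_{e}}}$ requires), the left-hand side is real, and its imaginary part yields $\kappa_{e}\,\Im\big(\tfrac{\overline{b}}{\overline{a}}\big)\,Q(\jj)=0$; hypothesis (i) makes the coefficient nonzero, so $Q(\jj)=0$, whence $\jj=0$ by ellipticity and then $\mm=0$ from the boundary condition. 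If $\kappa_{e}\in\C\setminus\R$, the imaginary part gives $\Im(\kappa_{e}^2)\int_\Omega|\uu|^2=-\Im\big(\kappa_{e}\tfrac{\overline{b}}{\overline{a}}\big)Q(\jj)$, where by hypothesis (ii) the coefficients $\Im(\kappa_{e}^2)$ and $\Im\big(\kappa_{e}\tfrac{\overline{b}}{\overline{a}}\big)$ share the same nonzero sign; since $\int_\Omega|\uu|^2\ge0$ and $Q(\jj)\ge0$, both sides must vanish, forcing $\int_\Omega|\uu|^2=0$ and hence $\uu=0$.

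It then remains only to close the real case, where $\gamma_{D}\uu=\gamma_{N_{\kappa_{e}}}\uu=0$: extending $\uu$ by zero across $\Gamma$, both jumps $[\gamma_{D}]\uu$ and $[\gamma_{N_{\kappa_{e}}}]\uu$ vanish, so the Stratton–Chu representation \eqref{intrep} gives $\uu\equiv0$, the desired contradiction. I expect the genuine difficulty to lie entirely in the second step — the bookkeeping of complex conjugates and signs needed to make $Q$ come out real and nonnegative, and the verification that hypotheses (i)--(ii) are precisely the sign conditions that close the imaginary-part argument — rather than in any hard analysis, since the trace/Green identity, the ellipticity and self-adjointness of $C_{0}^*$, and the representation formula are all already at hand.
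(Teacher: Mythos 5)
Your proof is correct and takes essentially the same route as the paper's: the Green identity \eqref{IPP2} for the $\Rot\Rot$ operator, insertion of the boundary condition through the invertible, $\mathcal{B}$-self-adjoint and $\mathcal{B}$-elliptic operator $C_{0}^{*}$, a sign discussion of the imaginary parts under hypotheses (i)--(ii), and the Stratton--Chu formula \eqref{intrep} to conclude $\uu=0$ once both Cauchy traces vanish. The only (harmless) slip is the assertion $\Im\bigl(\overline{b}/\overline{a}\bigr)=-\Im\bigl(a/b\bigr)$: your own identity $\overline{b}/\overline{a}=|b/a|^{2}\,(a/b)$ gives $\Im\bigl(\overline{b}/\overline{a}\bigr)=|b/a|^{2}\,\Im\bigl(a/b\bigr)$, but since at that point you only use that this coefficient is nonzero, which is exactly what hypothesis (i) guarantees, the argument is unaffected.
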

 
  \begin{proof}
Let $\kappa_{e}^2$ be an eigenvalue of the interior problem  and let $\uu\not=0$ be an eigenfunction. Using Green's theorem we have:
\begin{equation*}
 \begin{split}
  \int_{\Omega}|\Rot \uu|^2-\kappa_{e}^2\int_{\Omega}|\uu|^2=\kappa_{e}\,\mathcal{B}(\gamma_{D}\uu,\gamma_{N_{\kappa_{e}}}\overline{\uu})&=\overline{\kappa}_{e}\,\mathcal{B}(\gamma_{D}\uu,\overline{\gamma_{N_{\kappa_{e}}}\uu})\\&=\overline{\left(\kappa_{e}\frac{a}{b}\right)} \;\mathcal{B}(\gamma_{D} \uu,(C_{0}^{*})^{-1}\overline{\gamma_{D}\uu})\mathrm{\;if\;b\not=0}
  \\&=\overline{\kappa_{e}}\frac{b}{a}\mathcal{B}(C_{0}^{*}\gamma_{N_{\kappa_{e}}}\uu,\overline{\gamma_{N_{\kappa_{e}}}\uu})\mathrm{\;if\;a\not=0}
 \end{split}
\end{equation*}
Since $C_{0}^*$ is elliptic for the bilinear form $\mathcal{B}$, taking the imaginary part, we obtain 
$$\begin{array}{ll}-\Im(\kappa_{e}^2)\displaystyle{\int_{\Omega}|\uu|^2}&=-\Im\overline{\left(\kappa_{e}\dfrac{a}{b}\right)}\mathcal{B}((C_{0}^{*})^{-1}\gamma_{D} \uu,\overline{\gamma_{D}\uu})\vspace{2mm}\\&=-|\kappa_{e}|^2\Im\left(\dfrac{b}{\kappa_{e}a}\right)\mathcal{B}(\gamma_{N_{\kappa_{e}}}\uu,C_{0}^{*}\overline{\gamma_{N_{\kappa_{e}}}\uu}).\end{array}$$
Under the hypotheses of the lemma the left-hand side and the right-hand side have opposite sign, and it follows 
$$ 
 \mathcal{B}((C_{0}^{*})^{-1}(\gamma_{D} \uu),\overline{\gamma_{D}\uu})=0\text{ and }\mathcal{B}(\gamma_{N_{\kappa_{e}}}\uu,C_{0}^{*}\overline{\gamma_{N_{\kappa_{e}}}\uu})=0.
$$ 
As $C_{0}^{*}$ is elliptic for the bilinear  form $\mathcal{B}$, the traces $\gamma_{D}\uu$ and $\gamma_{N_{\kappa_{e}}}\uu$ then vanish. Thanks to the Stratton-Chu representation formula \eqref{intrep} in $\Omega$, we deduce that $\uu=0$,  which contradicts the initial assumption.
\end{proof}

\begin{remark}
Note that this associated interior problem is not an impedance problem (or Robin problem) as in the scalar case \cite{KlMa}. If we replace in \eqref{intp} the operator $C_{0}^{*}$ by the identity, we obtain a ``pseudo-impedance'' type problem. This is a non-elliptic problem, about whose spectrum we have no information. That the problem is non-elliptic can be seen as follows: If it were elliptic, its principal part would be elliptic, too. This would be the vector Laplace operator with the ``Neumann'' condition $\gamma_{N_{\kappa_{e}}}\uu=0$. Any gradient of a harmonic function in $H^1(\Omega)$ will satisfy the homogeneous problem, which therefore has an infinite-dimensional nullspace, contradicting ellipticity. Note that the issue here is not the apparent non-elliptic nature of the interior Maxwell $\Rot\Rot$ operator, which can easily be remedied by the usual regularization that adds $-\nabla\Div$, but the manifestly non-elliptic nature of the Maxwell ``Neumann'' boundary operator. For a ``true'' impedance problem, the operator $C_{0}^{*}$ would have to be replaced not by the identity, but by the rotation operator $\jj\mapsto\nn\times\jj$ which is used in Mautz's formulation \cite{Mautz}. This operator leads out of the space $\HH_{\times}^{-\frac{1}{2}}(\Div_{\Gamma},\Gamma)$, however, which rules it out for our purposes.\end{remark}

For our integral equations, the problem \eqref{intp} plays the same role as the Robin problem for the scalar case in \cite{KlMa}.

\begin{theorem} 
\label{vp} 
Assume that the hypotheses of Theorem~\ref{t1} are satisfied. 
Then
for $(a,b)\ne(0,0)$,
the homogeneous integral equations \eqref{S0} and \eqref{T0} admit  nontrivial solutions if and only if $\kappa_{e}^2$ is an eigenvalue of the associated interior problem.
\end{theorem}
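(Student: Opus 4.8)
The plan is to set up a correspondence between a nontrivial density $\jj_0$ solving the homogeneous equations and a nonzero eigenfunction of \eqref{intp}, using as intermediary the layer potential
$$
 \EE := -a\,\Psi_{E_{\kappa_{e}}}\jj_0 - b\,\Psi_{M_{\kappa_{e}}}(C_{0}^*\jj_0)
$$
on $\R^3\setminus\Gamma$, whose restriction to $\Omega^c$ is the field $\EE^s$ of \eqref{u2c} and whose restriction to $\Omega$ I denote by $\uu$. By Lemma~\ref{3.2}, $\EE$ solves $(\Rot\Rot-\kappa_{e}^2\Id)\EE=0$ in $\R^3\setminus\Gamma$ and $\EE^s$ satisfies the Silver-M\"uller condition; in particular $\uu$ already satisfies the differential equation in \eqref{intp}. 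Combining the jump relations with the definitions \eqref{Du2}--\eqref{Nu2} of $L_{e}$ and $N_{e}$, the interior traces of $\uu$ are
$$
 \gamma_{D}\uu = L_{e}\jj_0 + b\,C_{0}^*\jj_0,
 \qquad
 \gamma_{N_{\kappa_{e}}}\uu = N_{e}\jj_0 + a\,\jj_0,
$$
and a short computation in which the terms $\pm\,ab\,C_{0}^*\jj_0$ cancel gives
$$
 a\,\gamma_{D}\uu - b\,C_{0}^*\gamma_{N_{\kappa_{e}}}\uu = a\,L_{e}\jj_0 - b\,C_{0}^*N_{e}\jj_0 .
$$
The whole argument then turns on the single fact that $\EE^s=0$ in $\Omega^c$, i.e.\ $L_{e}\jj_0=N_{e}\jj_0=0$, is exactly what links ``$\jj_0$ solves \eqref{S0}'' to ``$\uu$ solves \eqref{intp}''.

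\textbf{Forward direction.} Suppose $\jj_0\ne0$ solves \eqref{S0}. By Theorem~\ref{solpt} the associated fields $\EE^s,\EE^i$ solve the homogeneous transmission problem, and since the hypotheses of Theorem~\ref{t1} are in force, uniqueness yields $\EE^s=0$ in $\Omega^c$; hence $L_{e}\jj_0=\gamma_{D}^c\EE^s=0$ and $N_{e}\jj_0=\gamma_{N_{\kappa_{e}}}^c\EE^s=0$. The trace formulas then reduce to $\gamma_{D}\uu=b\,C_{0}^*\jj_0$ and $\gamma_{N_{\kappa_{e}}}\uu=a\,\jj_0$, and the identity above shows $a\,\gamma_{D}\uu-b\,C_{0}^*\gamma_{N_{\kappa_{e}}}\uu=0$, so $\uu$ solves \eqref{intp}. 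Finally $\uu\ne0$: if $\uu=0$ then $a\,\jj_0=0$ and $b\,C_{0}^*\jj_0=0$, and since $C_{0}^*$ is invertible (Lemma~\ref{elliptic}) while $(a,b)\ne(0,0)$, this forces $\jj_0=0$, a contradiction. Hence $\kappa_{e}^2$ is an eigenvalue.

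\textbf{Converse.} Let $\uu\ne0$ solve \eqref{intp}. Extending $\uu$ by zero across $\Gamma$ and applying the Stratton-Chu representation \eqref{intrep} with wave number $\kappa_{e}$ gives
$$
 -\Psi_{E_{\kappa_{e}}}(\gamma_{N_{\kappa_{e}}}\uu)-\Psi_{M_{\kappa_{e}}}(\gamma_{D}\uu)
 = \begin{cases}\uu & \text{in }\Omega,\\ 0 & \text{in }\Omega^c.\end{cases}
$$
I then choose $\jj_0$ so that the densities match those in \eqref{u2c}, namely $a\,\jj_0=\gamma_{N_{\kappa_{e}}}\uu$ and $b\,C_{0}^*\jj_0=\gamma_{D}\uu$. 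If $a\ne0$ one sets $\jj_0=a^{-1}\gamma_{N_{\kappa_{e}}}\uu$, the second identity following from the boundary condition $a\,\gamma_{D}\uu=b\,C_{0}^*\gamma_{N_{\kappa_{e}}}\uu$; if $a=0$ (so $b\ne0$), that same boundary condition and the invertibility of $C_{0}^*$ give $\gamma_{N_{\kappa_{e}}}\uu=0$, and one sets $\jj_0=b^{-1}(C_{0}^*)^{-1}\gamma_{D}\uu$. With this $\jj_0$ the field $\EE$ coincides with the zero extension of $\uu$, so $\EE^s=0$ in $\Omega^c$, whence $L_{e}\jj_0=N_{e}\jj_0=0$ and both \eqref{S0} and \eqref{T0} hold. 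Moreover $\jj_0\ne0$, since $\jj_0=0$ would make $\gamma_{D}\uu$ and $\gamma_{N_{\kappa_{e}}}\uu$ vanish and the Stratton-Chu formula would then force $\uu=0$.

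\textbf{Main obstacle.} I expect no genuine analytic difficulty; the care needed is organizational. The crux is to use the invertibility and $\mathcal{B}$-ellipticity of $C_{0}^*$ (Lemma~\ref{elliptic}) at each passage between the density $\jj_0$ and the Cauchy data $(\gamma_{D}\uu,\gamma_{N_{\kappa_{e}}}\uu)$, together with the standing assumption $(a,b)\ne(0,0)$, to ensure the correspondence $\jj_0\leftrightarrow\uu$ is nondegenerate in both directions. The only external ingredient is the uniqueness Theorem~\ref{t1}, invoked once in the forward direction to force $\EE^s=0$; everything else is bookkeeping with the jump relations and the definitions of $L_{e}$ and $N_{e}$.
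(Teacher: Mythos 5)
Your proof is correct and follows essentially the same route as the paper: the same intermediary layer potential, Theorem~\ref{solpt} plus the uniqueness Theorem~\ref{t1} to kill the exterior field, the jump relations together with the invertibility of $C_{0}^*$ and $(a,b)\ne(0,0)$ for nondegeneracy of the correspondence $\jj_0\leftrightarrow\uu$ --- and your Stratton--Chu matching in the converse is just the paper's Calder\'on-relation computation in integrated form, with the case split on $a\ne0$ versus $a=0$ instead of $b\ne0$ versus $b=0$. The only cosmetic omission is that the forward direction should also cover a nontrivial solution of \eqref{T0}, but your argument applies verbatim since Theorem~\ref{solpt} accepts a solution of either equation.
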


\begin{proof} Assume that $\jj_{0} \not = 0$ solves \eqref{S0}
or \eqref{T0}.\newline 
We construct $\uu_{2}$ and $\uu_{1}$ as follows:
\begin{displaymath}
\begin{array}{ll}\vspace{1mm} \uu_{2}(x) = -a\Psi_{E_{\kappa_{e}}}\jj_{0}(x) -
b\Psi_{M_{\kappa_{e}}}C_{0}^* \jj_{0}(x)\textrm{ for } x\in\Omega^c \\ \uu_{1}(x) =
-\dfrac{1}{\rho}\Psi_{E_{\kappa_{i}}}(N_{e}\jj_{0})(x) - \Psi_{M_{\kappa_{i}}}(L_{e}\jj_{0})(x)\textrm{ for }
  x\in \Omega \end{array}
\end{displaymath}
By Theorem \ref{solpt}, $\uu_{1}$ and $\uu_{2}$ together solve the transmission problem
 with $\EE^{inc} = 0$.\newline
 Since this problem admits at most one solution, we have $\uu_{2} \equiv 0$ in
$\Omega^c$ and $\uu_{1} \equiv 0$ in $\Omega$.\newline
 Now we set $\uu(x) = -a\Psi_{E_{\kappa_{e}}}\jj_{0}(x) - b\Psi_{M_{\kappa_{e}}}C_{0}^* \jj_{0}(x)$ for $x\in\Omega$.
\newline 
We have on $\Gamma$ :
\begin{eqnarray}
\label{x} 
  \gamma_{D}^c\uu_{2} - \gamma_{D}\uu = b\,C_{0}^* \jj_{0}, \\
\label{xx}
 \gamma_{N_{\kappa_{e}}}^c\uu_{2} - \gamma_{N_{\kappa_{e}}}\uu = a\jj_{0}. \end{eqnarray}
 Since $\gamma_{D}^c\uu_{2} = \gamma_{N_{e}}^c \uu_{2}= 0$ on $\Gamma$, we find
\begin{equation*} 
 a\gamma_{D}\uu - bC_{0}^*\gamma_{N_{\kappa_{e}}}\uu = 0\text{ on }\Gamma.\end{equation*}
  Thus $\uu$ is an eigenfunction associated with the eigenvalue $\kappa_{e}^2$ of the interior problem
 or $\uu\equiv 0$. But this latter possibility can be eliminated
since it implies that $\gamma_{D}\uu =\gamma_{N_{\kappa_{e}}}\uu
=0$, whence $\jj_{0} = 0$ by \eqref{x} and \eqref{xx}, which is contrary to the assumption.\newline
Conversely, assume that $\kappa_{e}^2$ is an eigenvalue of the associated interior problem. Let $v_{0}\not\equiv 0$ be a corresponding eigenfunction. The Calder\'on relations in $\Omega$ imply that :
\begin{displaymath}\begin{array}{ll}
-C_{\kappa_{e}}\gamma_{N_{\kappa_{e}}}v_{0} + \left(\tfrac{1}{2}\Id - M_{\kappa_{e}}\right)\gamma_{D}v_{0} = 0, \\
\left(\tfrac{1}{2}\Id - M_{\kappa_{e}}\right)\gamma_{N_{\kappa_{e}}}v_{0} - C_{\kappa_{e}}\gamma_{D}v_{0} = 0.
\end{array}
\end{displaymath}
 Using the equality $a\gamma_{D}v_{0}-b\;C_{0}^*\gamma_{N_{\kappa_{e}}}v_{0}=0$,  we obtain 
$$
 L_{e}(C_{0}^*)^{-1}\gamma_{D}v_{0} = 0, \quad
 N_{e}(C_{0}^*)^{-1}\gamma_{D}v_{0} = 0, \quad
 L_{e}\gamma_{N_{\kappa_{e}}}v_{0} = 0, \quad
 N_{e}\gamma_{N_{\kappa_{e}}}v_{0} = 0.
$$
If $b\not = 0$, then $\gamma_{D}v_{0}\ne0$, and
$\jj_0=(C_{0}^*)^{-1}\gamma_{D}v_{0}$ is a nontrivial solution of \eqref{S0} and \eqref{T0}.\newline
If $b = 0$, then $\gamma_{N_{\kappa_{e}}}v_{0}\ne0$, and 
$\jj_0=\gamma_{N_{\kappa_{e}}}v_{0}$ is a nontrivial solution of \eqref{S0}
and \eqref{T0}.
\end{proof}

Whereas until now we only assumed that the boundary $\Gamma$ is Lipschitz, we now present two theorems on the operators $\SS$ and $\TT$ that are, in this generality, only valid for smooth boundaries. By \emph{smooth} we mean, for simplicity, $\mathscr{C}^{\infty}$ regularity, although a careful check of the proof would show that some finite regularity, such as $\mathscr{C}^{2}$, would be sufficient.  

\begin{theorem}\label{thT1}
 Assume that 
 \begin{itemize}
 \item[(i)] the boundary $\Gamma$ is smooth and simply connected,
 \item[(ii)] the constants $a$, $b$, $\mu_{e}$, $\mu_{i}$, $\kappa_{e}$ and $\kappa_{i}$ satisfy:
$$ 
 \begin{array}{ll}\left(b\kappa_{e}+2a\right)\not=0,\;\left(1+\dfrac{\mu_{e}}{\mu_{i}}\right)\not=0,\;\left(b-2a\kappa_{e}\right)\not=0\text{ and }\left(1+\dfrac{\mu_{e}\kappa_{i}^2}{\mu_{i}\kappa_{e}^2}\right)\not=0.\end{array}
$$\end{itemize}
Then $\SS$ is a  Fredholm operator of index zero
on $\HH_{\times}^{-\frac{1}{2}}(\Div_{\Gamma},\Gamma)$.
\end{theorem}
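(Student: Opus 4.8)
The plan is to exploit that the boundary is smooth, so that $M_{\kappa_{i}}$ and $M_{\kappa_{e}}$ are compact (the classical result quoted after Lemma~\ref{lMk}), and to reduce $\SS$ modulo compact operators to a combination of the ``electric'' operators $C_{\kappa_{e}}$, $C_{\kappa_{i}}$ and the regularizer $C_{0}^{*}$, which can then be analyzed by Helmholtz decomposition. Since both Fredholmness and the index are stable under compact perturbations, it suffices to treat the leading operator. Writing $\equiv$ for equality modulo compact operators on $\HH_{\times}^{-\frac12}(\Div_{\Gamma},\Gamma)$, compactness of $M_{\kappa_{e}}$ turns \eqref{Du2}--\eqref{Nu2} into $L_{e}\equiv aC_{\kappa_{e}}-\tfrac{b}{2}C_{0}^{*}$ and $N_{e}\equiv-\tfrac{a}{2}\Id+bC_{\kappa_{e}}C_{0}^{*}$, while compactness of $M_{\kappa_{i}}$ gives $\rho(-\tfrac12\Id+M_{\kappa_{i}})\equiv-\tfrac{\rho}{2}\Id$; substituting into \eqref{S} yields
\begin{equation*}
 \SS\ \equiv\ -\frac{\rho a}{2}\,C_{\kappa_{e}}-\frac{a}{2}\,C_{\kappa_{i}}+\frac{\rho b}{4}\,C_{0}^{*}+b\,C_{\kappa_{i}}C_{\kappa_{e}}C_{0}^{*}.
\end{equation*}

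Next I would pass to the Helmholtz decomposition. On a smooth simply connected $\Gamma$ every $\jj\in\HH_{\times}^{-\frac12}(\Div_{\Gamma},\Gamma)$ is written uniquely as $\jj=\nabla_{\Gamma}p+\Rot_{\Gamma}q$, identifying the energy space with the product $H^{3/2}(\Gamma)/\C\times H^{1/2}(\Gamma)/\C$; the space is thus of mixed order. Under this identification each operator acting on tangential fields becomes a $2\times2$ matrix of scalar pseudodifferential operators. Using $C_{\kappa}\equiv C_{\kappa,0}$ (Lemma~\ref{3.4}(i)) together with $C_{\kappa,0}=-\kappa\,\nn\times V_{0}+\kappa^{-1}\Rot_{\Gamma}V_{0}\Div_{\Gamma}$ and $C_{0}^{*}=\nn\times V_{0}+\Rot_{\Gamma}V_{0}\Div_{\Gamma}$, the fact that $\Div_{\Gamma}$ kills the $\Rot_{\Gamma}q$ part while $\rot_{\Gamma}$ kills the $\nabla_{\Gamma}p$ part, and the single datum that the scalar single layer $V_{0}$ has principal symbol $\tfrac{1}{2|\xi|}$, one computes the principal symbols. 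The outcome is that each of $C_{\kappa_{e}}$, $C_{\kappa_{i}}$, $C_{0}^{*}$ is \emph{anti-diagonal} to leading order, with a $p\leftarrow q$ block of order $-1$ and a $q\leftarrow p$ block of order $+1$; for $C_{\kappa}$ the two blocks carry the factors $\kappa$ and $\kappa^{-1}$ respectively, reflecting the two terms of $\Psi_{E_{\kappa}}$.

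Then I would assemble the symbol of $\SS$. The product $C_{\kappa_{i}}C_{\kappa_{e}}$ of two anti-diagonal matrices is diagonal to leading order (entries $\tfrac14\kappa_{i}/\kappa_{e}$ and $\tfrac14\kappa_{e}/\kappa_{i}$, consistent with $C_{\kappa}^{2}\equiv\tfrac14\Id$ from \eqref{Ck2}), so $C_{\kappa_{i}}C_{\kappa_{e}}C_{0}^{*}$ is again anti-diagonal; hence the whole reduced operator is anti-diagonal, its diagonal blocks having vanishing principal symbol and being therefore compact on the respective factors. Substituting $\rho=\mu_{e}\kappa_{i}/(\mu_{i}\kappa_{e})$ and collecting terms, the two off-diagonal symbols factor, up to nonzero numerical constants and the appropriate power of $|\xi|$, as
\begin{equation*}
 |\xi|\,\sigma_{pq}(\SS)\ \propto\ \Big(1+\frac{\mu_{e}}{\mu_{i}}\Big)\,(b\kappa_{e}+2a),
 \qquad
 |\xi|^{-1}\sigma_{qp}(\SS)\ \propto\ \Big(1+\frac{\mu_{e}\kappa_{i}^{2}}{\mu_{i}\kappa_{e}^{2}}\Big)\,(b-2a\kappa_{e}),
\end{equation*}
that is, each entry is a product of one ``material'' factor and one ``parameter'' factor from hypothesis~(ii). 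Thus under (ii) both entries are nonvanishing for all $\xi\neq0$, the order-zero rescaled principal symbol of $\SS$ is an invertible constant matrix, and $\SS$ is elliptic, hence Fredholm.

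Finally, for the index, the principal symbols above are isotropic (they depend on $\xi$ only through $|\xi|$), so the $p\leftarrow q$ block is a scalar elliptic operator of order $-1$ with non-winding symbol: it has the same principal symbol as a nonzero constant multiple of $(1-\Delta_{\Gamma})^{-1/2}:H^{1/2}(\Gamma)\to H^{3/2}(\Gamma)$, an isomorphism, from which it differs by a compact operator, so it has index zero; the same argument applies to the $q\leftarrow p$ block via $(1-\Delta_{\Gamma})^{1/2}:H^{3/2}(\Gamma)\to H^{1/2}(\Gamma)$. An anti-diagonal operator built from two index-zero blocks is Fredholm of index equal to their sum, namely zero, and restoring the compact terms does not change the index; hence $\SS$ is Fredholm of index zero. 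The main obstacle I expect is the symbol computation of the two middle paragraphs, and specifically the bookkeeping in the mixed-order space: one must carry the $\kappa/\kappa^{-1}$ weighting of the two blocks correctly through the triple product $C_{\kappa_{i}}C_{\kappa_{e}}C_{0}^{*}$ and then check that the four resulting scalar combinations collapse exactly to the products of the factors in (ii). Conceptually this is routine symbol calculus, but it is the only place where the hypotheses (ii) are genuinely consumed, and hence where the argument can fail.
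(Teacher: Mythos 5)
Your argument is correct and, at its core, it is the paper's own proof: reduce $\SS$ modulo compact operators (legitimate on a smooth boundary, where $M_{\kappa}$ is compact and $C_{\kappa}-C_{\kappa,0}$ is compact by Lemma~\ref{3.4}) to $\tfrac{\rho b}{4}C_{0}^{*}+b\,C_{\kappa_{i},0}C_{\kappa_{e},0}C_{0}^{*}-\tfrac{a}{2}\left(\rho C_{\kappa_{e},0}+C_{\kappa_{i},0}\right)$, pass to the Helmholtz decomposition $H^{\frac32}(\Gamma)/\R\times H^{\frac12}(\Gamma)/\R$, exploit the anti-diagonal mixed-order structure with the $(\kappa,\kappa^{-1})$ weighting, and conclude from the nonvanishing of the two factored leading entries. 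Two sub-steps are handled by different means. Where you invoke classical pseudodifferential calculus (scalar principal symbol $\tfrac{1}{2|\xi|}$ of $V_{0}$, forcing the diagonal blocks to drop order), the paper proves the same order reduction by an explicit kernel identity (Lemma~\ref{regular}) and gets the diagonalization constants from the Calder\'on identity \eqref{Ck2} via $C_{0}^{*2}\equiv-\tfrac14\Id$ modulo compacts — equivalent content, with your route presupposing that $V_{0}$ and its compositions with $\nabla_{\Gamma}$, $\Rot_{\Gamma}$, $\Div_{\Gamma}$, $\rot_{\Gamma}$ are classical $\psi$DOs on a smooth surface (standard, but worth stating). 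For the index, the paper simply multiplies the invertible constant diagonal matrix by $C_{0}^{*}$ and uses the Steinbach--Windisch invertibility of $C_{0}^{*}$ (Lemma~\ref{elliptic}) to exhibit $\SS$ as a compact perturbation of an invertible operator; your per-block comparison with $(1-\Delta_{\Gamma})^{\mp 1/2}$ plus additivity of the index for an anti-diagonal block operator is equally valid and has the mild advantage of not consuming the invertibility of the regularizer at this point. One bookkeeping discrepancy: your pairing of parameter and material factors is crossed relative to the paper's \eqref{estim1}, where the order $-1$ ($p\leftarrow q$) entry carries $\bigl(1+\tfrac{\mu_{e}}{\mu_{i}}\bigr)(b-2a\kappa_{e})$ and the order $+1$ entry carries $\bigl(1+\tfrac{\mu_{e}\kappa_{i}^{2}}{\mu_{i}\kappa_{e}^{2}}\bigr)(b\kappa_{e}+2a)$, the opposite of what you claim. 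Since hypothesis (ii) requires all four factors to be nonzero, the swap is immaterial for the conclusion — and the sign bookkeeping at exactly this spot is delicate even in the paper's own intermediate displays — but it confirms your closing caveat: the collapse into the products listed in (ii) is the one step that must actually be computed rather than inferred from the structure.
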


\begin{proof}\label{prT1}
We can rewrite $\SS$ as follows:
\begin{displaymath}
\begin{array}{ll}\SS =& \tfrac{1}{4}b\rho C_{0}^*-
\tfrac{1}{2}b\rho(M_{\kappa_{i}}+M_{\kappa_{e}})C_{0}^* +b\rho M_{\kappa_{i}}M_{\kappa_{e}}C_{0}^*-\tfrac{1}{2}a\rho(C_{\kappa_{e}}-C_{\kappa_{e},0})\vspace{2mm}\\&-\tfrac{1}{2}a(C_{\kappa_{i}}-C_{\kappa_{i},0})-\tfrac{1}{2}a(\rho C_{\kappa_{e},0}+C_{\kappa_{i},0})+  a\rho
M_{\kappa_{i}}C_{\kappa_{e}} +  aC_{\kappa_{i}}M_{\kappa_{e}}\vspace{2mm}\\&+b(C_{\kappa_{i}}-C_{\kappa_{i},0})C_{\kappa_{e}}C_{0}^*+bC_{\kappa_{i},0}(C_{\kappa_{e}}-C_{\kappa_{e},0})C_{0}^*+
bC_{\kappa_{i},0}C_{\kappa_{e},0}C_{0}^*.
\end{array}
\end{displaymath}
Thus $\SS$ is a compact perturbation of the operator 
$$\SS_{1}=b\left(\tfrac{1}{4}\rho \Id+C_{\kappa_{i},0}C_{\kappa_{e},0}\right)C_{0}^*-\frac{1}{2}a\left(\rho C_{\kappa_{e},0}+C_{\kappa_{i},0}\right).$$
We have to show that the operator $\SS_{1}$ is Fredholm of index zero. 
For this we use the \emph{Helmholtz decomposition} of $\HH_{\times}^{-\frac{1}{2}}(\Div_{\Gamma},\Gamma)$ :
\begin{equation}
\label{Hholtz}
 \HH_{\times}^{-\frac{1}{2}}(\Div_{\Gamma},\Gamma)= 
 \nabla_{\Gamma} H^{\frac{3}{2}}(\Gamma) \oplus
 \Rot_{\Gamma} H^{\frac{1}{2}}(\Gamma)\,.
\end{equation}
For a detailed proof of \eqref{Hholtz} see \cite{dlB}. Note that we are assuming that the boundary $\Gamma$ is smooth and simply connected. For a proof of the following result, we refer to \cite{BuCi2,MartC,Ned}.
\begin{lemma}
\label{LapBel}
 Let $\Gamma$ be smooth and simply connected and $t\in\R$. The Laplace-Beltrami operator 
 \begin{equation}\label{eqd1}
 \Delta_{\Gamma}=\Div_{\Gamma}\nabla_{\Gamma}=-\rot_{\Gamma}\Rot_{\Gamma}\end{equation} 
is linear and continuous from $H^{t+2}(\Gamma)$ to $H^t(\Gamma)$. \\
It is an isomorphism from $H^{t+2}(\Gamma)\slash\R$ to the space
$H^t_{*}(\Gamma)$ defined by
$$
u\in H^t_{*}(\Gamma)\quad\Longleftrightarrow\quad u\in H^t(\Gamma)\textrm{ and }\int_{\Gamma}u=0.
$$
\end{lemma}
The terms in the decomposition 
$\jj=\nabla_{\Gamma}p+\Rot_{\Gamma}q$ for 
$\jj\in  \HH_{\times}^{-\frac{1}{2}}(\Div_{\Gamma},\Gamma)$
are obtained by solving the Laplace-Beltrami equation:
$$
  p=\Delta_{\Gamma}^{-1}\Div_{\Gamma}\jj\;,\qquad
  q=-\Delta_{\Gamma}^{-1}\rot_{\Gamma}\jj\,.
$$
The mapping 
\begin{equation}
 \begin{array}{ccc}\HH^{-\frac{1}{2}}_{\times}(\Div_{\Gamma},\Gamma)&\rightarrow& H^{\frac{3}{2}}(\Gamma)/\R\times H^{\frac{1}{2}}(\Gamma)/\R\\\jj=\nabla_{\Gamma}p+\Rot_{\Gamma}q&\mapsto&\left(\begin{array}{cc}p\\q\end{array}\right)
 \end{array}
\end{equation}
is an isomorphism. 
Using this isomorphism, we can rewrite the operator $\SS_{1}$ as an operator $\mathcal{S}_{1}$ defined from $H^{\frac{3}{2}}(\Gamma)\slash\R\times H^{\frac{1}{2}}(\Gamma)\slash\R$ into itself. Then to show that $\SS_{1}$ it is Fredholm of index zero it suffices to show that $\mathcal{S}_{1}$ has this property.
 Let us begin by rewriting $C_{0}^{*}$ and $C_{\kappa,0}$ as operators $\mathcal{C}_{0}^{*}$ and $\mathcal{C}_{\kappa,0}$ defined on $H^{\frac{3}{2}}(\Gamma)\slash\R\times H^{\frac{1}{2}}(\Gamma)\slash\R$. 
 We have to determine $P_{0}\in H^{\frac{3}{2}}(\Gamma)\slash\R$ and $Q_{0}\in H^{\frac{1}{2}}(\Gamma)\slash\R$ such that $C_{0}^{*}(\nabla_{\Gamma}p+\Rot_{\Gamma}q)=\nabla_{\Gamma}P_{0}+\Rot_{\Gamma}Q_{0}$, and this defines $\mathcal{C}_{0}^{*}$ by: 
$$
 \mathcal{C}_{0}^{*}\left(\begin{array}{cc}p\\q\end{array}\right)=\left(\begin{array}{cc}P_{0}\\Q_{0}\end{array}\right).
$$  
We have 
$$
 P_{0}=\Delta_{\Gamma}^{-1}\Div_{\Gamma}C_{0}^{*}(\nabla_{\Gamma}p+\Rot_{\Gamma}q)$$ and  $$Q_{0}=-\Delta_{\Gamma}^{-1}\rot_{\Gamma}C_{0}^{*}(\nabla_{\Gamma}p+\Rot_{\Gamma}q).
$$ 
Using the integral representation of $C_{0}^{*}$ and the equalities \eqref{eqd2} and \eqref{eqd3} we obtain:
\begin{equation*}
\mathcal{C}_{0}^{*}=\left(\begin{array}{cc}\mathcal{C}_{11}&\mathcal{C}_{12}\\\mathcal{C}_{21,1}+\mathcal{C}_{21,2}&\mathcal{C}_{22}\end{array}\right),
\end{equation*}
where
$$
\begin{array}{lclccl}C_{11}&=&-\Delta_{\Gamma}^{-1}\rot_{\Gamma}V_{0}\nabla_{\Gamma},&C_{12}&=&-\Delta_{\Gamma}^{-1}\rot_{\Gamma}V_{0}\Rot_{\Gamma},\\C_{21,1}&=&-\Delta_{\Gamma}^{-1}\Div_{\Gamma}V_{0}\nabla_{\Gamma},&C_{22}&=&-\Delta_{\Gamma}^{-1}\Div_{\Gamma}V_{0}\Rot_{\Gamma},\\C_{21,2}&=&\;V_{0}\Delta_{\Gamma}.&&&
\end{array}
$$
Some of these operators are of lower order than what a simple counting of orders (with -1 for the order of $V_0$) would give:
\begin{lemma} \label{regular}
 Let $\Gamma$ be smooth. Then 
 the operators $\rot_{\Gamma}V_{0}\nabla_{\Gamma}$ and $\Div_{\Gamma}V_{0}\Rot_{\Gamma}$ are linear and continuous from 
 $H^t(\Gamma)$ into itself.
\end{lemma}
\begin{proof}
 These results are due to the equalities \eqref{eqd2}.
 One can write (see \cite[page 240]{Ned}):
\begin{eqnarray*}  
 \rot_{\Gamma}V_{0}\nabla_{\Gamma}u(x)&=&\displaystyle{\int_{\Gamma}\nn(x)\cdot\Rot^x\left\{G(0,|x-y|)\nabla_{\Gamma}u(y)\right\}d\sigma(y)}\vspace{2mm}\\&=&\displaystyle{\int_{\Gamma}\left\{\left(\nn(x)-\nn(y)\right)\times\nabla^xG(0,|x-y|)\right\}\cdot\nabla_{\Gamma}u(y)d\sigma(y)}\vspace{2mm}\\&&-V_{0}\rot_{\Gamma}\nabla_{\Gamma}u.
\end{eqnarray*}
The second term on the right hand side vanishes, and the kernel 
$$
 \left(\nn(x)-\nn(y)\right)\times\nabla^xG(0,|x-y|)
$$ 
has the same weak singularity as the fundamental solution $G(0,|x-y|)$. We deduce the lemma using similar arguments for the other operator. 
\end{proof}

As a consequence, the operators $\mathcal{C}_{11}$ and $\mathcal{C}_{22}$ are of order -2, the operators $\mathcal{C}_{12}$ and $\mathcal{C}_{21,1}$ are of order -1 and the operator $C_{21,2}$ is of order 1. 
Therefore, $\mathcal{C}_{0}^*$ is a compact perturbation of
$$
  \left(\begin{array}{cc}0&\mathcal{C}_{12}\\
    \mathcal{C}_{21,2}&0
  \end{array}\right)
$$

By definition of $C_{\kappa,0}$, the operator $\mathcal{C}_{\kappa,0}$ can be written as: 
\begin{equation*}
 \begin{array}{ccl}
   \mathcal{C}_{\kappa,0}&=&\left(
    \begin{array}{cc}-\kappa \mathcal{C}_{11}&-\kappa \mathcal{C}_{12}\\-\kappa \mathcal{C}_{21,1}+\kappa^{-1}\mathcal{C}_{21,2}&-\kappa \mathcal{C}_{22}
    \end{array}\right)\vspace{2mm}\\
    &=&\left(\begin{array}{ll}-\kappa&0\\0&\kappa^{-1}\end{array}\right)
    \mathcal{C}_{0}^*-(\kappa+\kappa^{-1})\left(
    \begin{array}{ll}0&0\\\mathcal{C}_{21,1}&\mathcal{C}_{22}
    \end{array}\right).
 \end{array}
\end{equation*}
The second term on the right hand side is compact on $H^{\frac{3}{2}}(\Gamma)\slash\R\times H^{\frac{1}{2}}(\Gamma)\slash\R$.\newline

Since $\mathcal{C}_{\kappa,0}$ is a  compact perturbation of 
$$
 \left(\begin{array}{ll}-\kappa&0\\0&\kappa^{-1}\end{array}\right)\mathcal{C}_{0}^{*},
$$ 
 the sum  $\mathcal{C}_{\kappa_{i},0}+\rho \mathcal{C}_{\kappa_{e},0}$ is a compact perturbation of  
$$
\left(\begin{array}{cc}-(\kappa_{i}+\rho\kappa_{e})&0\\0&(\kappa_{i}^{-1}+\rho\kappa_{e}^{-1})\end{array}\right)\mathcal{C}_{0}^{*}.
$$ 
 The operator $\mathcal{C}_{\kappa_{i},0}\mathcal{C}_{\kappa_{e},0}$ is a compact perturbation of  $$\left(\begin{array}{cc}-\kappa_{i}\kappa_{e}^{-1}&0\\0&-\kappa_{i}^{-1}\kappa_{e}\end{array}\right)\mathcal{C}^{*2}_{0}.$$

 \begin{remark} Notice that lemma \ref{regular} is not true in the Lipschitz case. Nevertheless, one can use the Helmholtz decomposition also for a Lipschitz boundary. One only has to replace the space $H^{\frac{3}{2}}(\Gamma)$ by the more general space
$$
  \mathcal{H}(\Gamma) = \{u\in H^{1}(\Gamma),\; \Delta_{\Gamma}u\in H^{-\frac12}(\Gamma)\}\,,
$$
see {\rm \cite{BuMCSc,BuHiPeSc}}. Then a large part of the previous arguments is still valid. For example, 
 the operators $C_{11}$ and $C_{22}$, being of order $-1$, are still compact from $\mathcal{H}(\Gamma)$ and $H^{\frac{1}{2}}(\Gamma)$, respectively, to themselves. By the compactness of the embedding $\mathcal{H}(\Gamma)\hookrightarrow H^{\frac{1}{2}}(\Gamma)$ we deduce that the operator $C_{21,1}$ is still compact from $\mathcal{H}(\Gamma)$ to $H^{\frac{1}{2}}(\Gamma)$. The complete proof of the theorem does not go through, however, because it uses the compactness of $M_{\kappa}$.
\end{remark}

\begin{lemma} 
 For smooth $\Gamma$, the operator $C_{0}^{*2}$ is a compact perturbation of 
$-\tfrac{1}{4}\Id$. 
\end{lemma}
\begin{proof} It suffices to consider the principal part of \eqref{Ck2}. \end{proof}

Collecting all the results, we find that  $\mathcal{S}_{1}$ is a compact perturbation of
\begin{equation}\label{estim1}\begin{split}\left(\begin{array}{cc}\tfrac{1}{4}b(\rho+\kappa_{i}\kappa_{e}^{-1})-\tfrac{1}{2}a(\kappa_{i}+\rho\kappa_{e})&0\\0&\tfrac{1}{4}b(\rho+\kappa_{i}^{-1}\kappa_{e})+\tfrac{1}{2}a(\kappa_{i}^{-1}+\rho\kappa_{e}^{-1})\end{array}\right)\mathcal{C}_{0}^{*}.\end{split}\end{equation}
\bigskip We recall that  $\rho=\dfrac{\mu_{e}\kappa_{i}}{\mu_{i}\kappa_{e}}$. The matrix written above is invertible if: 
$$
 \frac{1}{4}b(\rho+\kappa_{i}\kappa_{e}^{-1})-\frac{1}{2}a(\kappa_{i}+\rho\kappa_{e})\not=0\quad\Leftrightarrow\quad\frac{1}{4}(b-2a\kappa_{e})\left(1+\dfrac{\mu_{i}}{\mu_{e}}\right)\not=0
$$
and
$$\frac{1}{4}b(\rho+\kappa_{i}^{-1}\kappa_{e})+\frac{1}{2}a(\kappa_{i}^{-1}+\rho\kappa_{e}^{-1})\not=0\quad\Leftrightarrow\quad\frac{1}{4}(b\kappa_{e}+2a)\left(1+\dfrac{\mu_{i}\kappa_{e}^2}{\mu_{e}\kappa_{i}^2}\right)\not=0.
$$
Since the operator $C_{0}^{*}$ is invertible, we conclude that under the conditions of the theorem  the operator $\SS_{1}$ is  Fredholm of index zero and therefore $\SS$ too. The theorem is proved.
\end{proof}

Using similar arguments we obtain the following theorem.
\begin{theorem} 
\label{thT2}
 Assume that
  \begin{itemize}
 \item[(i)] the boundary $\Gamma$ is smooth and simply connected,
 \item[(ii)] the constants $a$, $b$, $\mu_{e}$, $\mu_{i}$, $\kappa_{e}$ and $\kappa_{i}$ satisfy:
$$ 
 \begin{array}{ll}\left(b\kappa_{e}-2a\right)\not=0,\;\left(1+\dfrac{\mu_{e}}{\mu_{i}}\right)\not=0,\;\left(b+2a\kappa_{e}\right)\not=0\text{ and }\left(1+\dfrac{\mu_{e}\kappa_{i}^2}{\mu_{i}\kappa_{e}^2}\right)\not=0.\end{array}
$$\end{itemize}
  Then $\TT$ is a Fredholm operator of index zero in $\HH_{\times}^{-\frac{1}{2}}(\Div_{\Gamma},\Gamma)$.
\end{theorem}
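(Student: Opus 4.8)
The plan is to follow the proof of Theorem~\ref{thT1} almost verbatim, the only genuine work being the final algebraic identification of the conditions. First I would substitute the definitions of $L_{e}$ and $N_{e}$ into $\TT=\rho C_{\kappa_{i}}L_{e}+\left(-\tfrac12\Id+M_{\kappa_{i}}\right)N_{e}$ and expand. Since $\Gamma$ is smooth, $M_{\kappa}$ is compact, so every term containing a factor $M_{\kappa_{i}}$ or $M_{\kappa_{e}}$ is compact; using Lemma~\ref{3.4}(i) I also replace each $C_{\kappa}$ by $C_{\kappa,0}$ modulo a compact operator. What survives is that $\TT$ is a compact perturbation of
\[
 \TT_{1}=\tfrac14 a\,\Id + a\rho\,C_{\kappa_{i},0}C_{\kappa_{e},0}
 -\tfrac12 b\,(\rho C_{\kappa_{i},0}+C_{\kappa_{e},0})\,C_{0}^{*}\,,
\]
and it suffices to show that $\TT_{1}$ is Fredholm of index zero.

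Next I would pass to the Helmholtz decomposition \eqref{Hholtz}, transporting $\TT_{1}$ through the isomorphism onto $H^{\frac32}(\Gamma)/\R\times H^{\frac12}(\Gamma)/\R$ to an operator $\mathcal{T}_{1}$, which is Fredholm of index zero if and only if $\TT_{1}$ is. Here I reuse, without change, the matrix reductions already established in the proof of Theorem~\ref{thT1}: writing $\equiv$ for equality modulo compact operators, one has $\mathcal{C}_{\kappa,0}\equiv\left(\begin{smallmatrix}-\kappa&0\\0&\kappa^{-1}\end{smallmatrix}\right)\mathcal{C}_{0}^{*}$, the operator $\mathcal{C}_{0}^{*}$ is off-diagonal (namely $\mathcal{C}_{0}^{*}\equiv\left(\begin{smallmatrix}0&\mathcal{C}_{12}\\\mathcal{C}_{21,2}&0\end{smallmatrix}\right)$, which rests on Lemma~\ref{regular}), and $\mathcal{C}_{0}^{*2}\equiv-\tfrac14\Id$. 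The decisive structural observation is that, in contrast to $\SS_{1}$, every term of $\TT_{1}$ carries an \emph{even} number of factors of type $\mathcal{C}_{0}^{*}$ or $\mathcal{C}_{\kappa,0}$: the identity term has none, the product $\mathcal{C}_{\kappa_{i},0}\mathcal{C}_{\kappa_{e},0}$ has two, and $(\rho\mathcal{C}_{\kappa_{i},0}+\mathcal{C}_{\kappa_{e},0})\mathcal{C}_{0}^{*}$ has two. Hence, collapsing each pair by $\mathcal{C}_{0}^{*2}\equiv-\tfrac14\Id$, no leftover factor $\mathcal{C}_{0}^{*}$ remains and $\mathcal{T}_{1}$ reduces to a purely diagonal operator $\left(\begin{smallmatrix}\lambda_{1}\Id&0\\0&\lambda_{2}\Id\end{smallmatrix}\right)$. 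This is why, unlike in Theorem~\ref{thT1}, the concluding step will not need the invertibility of $C_{0}^{*}$, only the nonvanishing of two scalars.

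Carrying out the bookkeeping — keeping track of the fact that the off-diagonal $\mathcal{C}_{0}^{*}$ does not commute with $\left(\begin{smallmatrix}-\kappa&0\\0&\kappa^{-1}\end{smallmatrix}\right)$ (it interchanges $\kappa\leftrightarrow\kappa^{-1}$ on the two components) and that $\mathcal{C}_{12}\mathcal{C}_{21,2}\equiv\mathcal{C}_{21,2}\mathcal{C}_{12}\equiv-\tfrac14\Id$ — I expect to obtain
\[
 \lambda_{1}=\tfrac18\Big(1+\tfrac{\mu_{e}\kappa_{i}^{2}}{\mu_{i}\kappa_{e}^{2}}\Big)\big(2a-b\kappa_{e}\big),\qquad
 \lambda_{2}=\tfrac1{8\kappa_{e}}\Big(1+\tfrac{\mu_{e}}{\mu_{i}}\Big)\big(b+2a\kappa_{e}\big),
\]
where the factorizations use $\rho\kappa_{i}\kappa_{e}^{-1}=\tfrac{\mu_{e}\kappa_{i}^{2}}{\mu_{i}\kappa_{e}^{2}}$, $\rho\kappa_{i}^{-1}\kappa_{e}=\tfrac{\mu_{e}}{\mu_{i}}$, $\rho\kappa_{i}+\kappa_{e}=\kappa_{e}\big(1+\tfrac{\mu_{e}\kappa_{i}^{2}}{\mu_{i}\kappa_{e}^{2}}\big)$ and $\rho\kappa_{i}^{-1}+\kappa_{e}^{-1}=\kappa_{e}^{-1}\big(1+\tfrac{\mu_{e}}{\mu_{i}}\big)$. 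Under hypothesis (ii) both $\lambda_{1}$ and $\lambda_{2}$ are nonzero, so the diagonal operator is an isomorphism and $\mathcal{T}_{1}$ is a compact perturbation of it, hence Fredholm of index zero; pulling back through the Helmholtz isomorphism, so are $\TT_{1}$ and $\TT$. A good sanity check here is that the four scalar factors appearing in $\lambda_{1},\lambda_{2}$ are precisely those of Theorem~\ref{thT1} with $2a$ replaced by $-2a$ in two of them, matching the sign change between the two hypotheses (ii).

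The main obstacle is purely the error-prone non-commutative matrix algebra: one must order the factors $\left(\begin{smallmatrix}-\kappa&0\\0&\kappa^{-1}\end{smallmatrix}\right)$ and the off-diagonal $\mathcal{C}_{0}^{*}$ correctly and then verify that the two resulting scalars $\lambda_{1},\lambda_{2}$ factor exactly into the four nonvanishing quantities of (ii). Everything else is a line-by-line transcription of the proof of Theorem~\ref{thT1}.
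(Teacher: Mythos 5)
Your proposal is correct and follows the paper's proof essentially verbatim: the paper likewise reduces $\TT$ modulo compacts (smooth boundary, so $M_{\kappa}$ is compact, plus Lemma~\ref{3.4}(i)) to the same $\TT_{1}$, passes through the Helmholtz decomposition reusing the matrix reductions from Theorem~\ref{thT1}, and arrives at the constant diagonal matrix \eqref{estim2}, whose entries factor exactly into your $\lambda_{1}=\tfrac18\bigl(1+\tfrac{\mu_{e}\kappa_{i}^{2}}{\mu_{i}\kappa_{e}^{2}}\bigr)(2a-b\kappa_{e})$ and $\lambda_{2}=\tfrac{1}{8\kappa_{e}}\bigl(1+\tfrac{\mu_{e}}{\mu_{i}}\bigr)(b+2a\kappa_{e})$. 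Your structural observation that no leftover factor $C_{0}^{*}$ survives (so its invertibility is not needed here, unlike for $\SS$) is accurate and consistent with the paper, which indeed invokes invertibility of $C_{0}^{*}$ only in the proof of Theorem~\ref{thT1}.
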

\begin{proof} The operator $\TT$ is a compact perturbation of $$\TT_{1}=a\left(\frac{1}{4} I+\rho C_{\kappa_{i},0}C_{\kappa_{e},0}\right)-\frac{b}{2}\left( C_{\kappa_{e},0}+\rho C_{\kappa_{i},0}\right)C^*_{0}.$$
We use again the Helmholtz decomposition of $\HH_{\times}^{-\frac{1}{2}}(\Div_{\Gamma},\Gamma)$ and rewrite the operators $C_{\kappa,0}$ as operators defined on $H^{\frac{3}{2}}(\Gamma)\slash\R\times H^{\frac{1}{2}}(\Gamma)\slash\R$. Collecting the results from the previews proof we found that  the term  $\big(\mathcal{C}_{\kappa_{e},0}+\rho \mathcal{C}_{\kappa_{i},0}\big)\mathcal{C}_{0}^*$ is a compact perturbation of  
$$
\frac{1}{4}\left(\begin{array}{cc}(\kappa_{e}+\rho\kappa_{i})&0\\0&-(\kappa_{e}^{-1}+\rho\kappa_{i}^{-1})\end{array}\right).$$
Finally the operator $\TT_{1}$ is a compact perturbation of 
\begin{equation}\label{estim2}\frac{1}{4}\left(\begin{array}{cc}a\left(1+\rho\kappa_{i}\kappa^{-1}_{e}\right)-\tfrac{1}{2}b\left(\kappa_{e}+\rho\kappa_{i}\right)&0\\0&a\left(1+\rho\kappa_{e}\kappa_{i}^{-1}\right)+\tfrac{1}{2}b\left(\kappa_{e}^{-1}+\rho\kappa_{i}^{-1}\right)\end{array}\right).\end{equation}
\end{proof}
Note that under standard hypotheses on the materials and for real frequencies, the material factors such as
$\bigl(1+\dfrac{\mu_{e}}{\mu_{i}}\bigr)$ and $\bigl(1+\dfrac{\mu_{e}\kappa_{i}^2}{\mu_{i}\kappa_{e}^2}\bigr)$ are always non-zero.

\begin{remark} Thanks to the explicit representations \eqref{estim1} and \eqref{estim2} one can deduce G\r{a}rding inequalities (positivity modulo compact perturbations) in $\HH_{\times}^{-\frac{1}{2}}(\Div_{\Gamma},\Gamma)$ for  $\SS$ (via the bilinear form $\mathcal{B}$) and for $\TT$ (via the $\LL^2$-duality pairing) in the case of a domain diffeomorphic to a ball.
\end{remark}

When $\Gamma$ is a only a Lipschitz continuous surface, one can still prove that the operators $\SS$ and $\TT$ are Fredholm operators of index zero, if one imposes more restrictive hypotheses on the physical parameters. We have the following G\r{a}rding inequalities.
\begin{theorem}\label{ThLip}Assume that
\begin{itemize}
\item[(i)] $\mu_{e}$, $\mu_{i}$, $\kappa_{e}$ and $\kappa_{i}$ are  positive real numbers.
\item[(ii)] $a=1$ and $b=-i\eta$ with $\eta\in\R$, $\eta>0$,
\end{itemize}
Then the operators $\SS$ and $\TT$ satisfy the following G\r{a}rding inequalities
$$
\begin{aligned}
\Im\left(\mathcal{B}(\SS\jj,\overline{\jj})+c_{S}(\jj,\jj)\right)&\ge C_{S}||\jj||_{\HH_{\times}^{-\frac{1}{2}}(\Div_{\Gamma},\Gamma)}\\
\Re\left(\mathcal{B}(\TT C_{0}^{*-1}\jj,\overline{\jj})+c_{T}(\jj,\jj)\right)&\ge C_{T}||\jj||_{\HH_{\times}^{-\frac{1}{2}}(\Div_{\Gamma},\Gamma)}
\end{aligned}
$$
 where $c_{S}$ and $c_{T}$ are compact bilinear forms and $C_{S}$ and $C_{T}$ are positive real constants.
\end{theorem}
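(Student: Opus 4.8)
First, inserting $a=1$, $b=-i\eta$ and using $\rho=\mu_{e}\kappa_{i}/(\mu_{i}\kappa_{e})$, I would write $\SS = A + i\eta\,B\,C_{0}^{*}$ and $\TT = A_{T} + i\eta\,B_{T}\,C_{0}^{*}$. Reducing modulo compact operators by Lemma~\ref{3.4}(i) (replacing $C_{\kappa}\to C_{\kappa,0}$ and $M_{\kappa}\to M_{0}$, the errors being absorbed into $c_{S},c_{T}$), these become
\[
A\equiv\rho(-\tfrac12\Id+M_{0})C_{\kappa_e,0}-C_{\kappa_i,0}(\tfrac12\Id-M_{0}),\qquad B\equiv\rho(-\tfrac14\Id+M_{0}-M_{0}^{2})-C_{\kappa_i,0}C_{\kappa_e,0},
\]
together with $A_{T}\equiv\rho C_{\kappa_i,0}C_{\kappa_e,0}+\tfrac14\Id-M_{0}+M_{0}^{2}$ and $B_{T}\equiv\rho C_{\kappa_i,0}(\tfrac12\Id-M_{0})-(-\tfrac12\Id+M_{0})C_{\kappa_e,0}$. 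Crucially, under hypothesis~(i) the scalars $\kappa_{i},\kappa_{e},\rho$ are real, so the operators $C_{\kappa,0}$, $M_{0}$, $C_{0}^{*}$, $V_{0}$ are all \emph{real} (they commute with complex conjugation). This is the structural substitute for the Helmholtz diagonalisation of Theorems~\ref{thT1} and \ref{thT2}, which is unavailable on a Lipschitz boundary.

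The key device is a parity bookkeeping for $\mathcal{B}$. Since $\mathcal{B}$ is skew and the operators are real, $\mathcal{B}(T\jj,\overline{\jj})$ is \emph{real} when $T$ is $\mathcal{B}$-antisymmetric and \emph{purely imaginary} when $T$ is $\mathcal{B}$-symmetric, because $\overline{\mathcal{B}(T\jj,\overline{\jj})}=-\mathcal{B}(T'\jj,\overline{\jj})$, where $T'$ is the $\mathcal{B}$-transpose. By Lemma~\ref{3.4}(ii) and Lemma~\ref{elliptic}, $C_{\kappa,0}$, $M_{0}$ and $C_{0}^{*}$ are all $\mathcal{B}$-antisymmetric modulo compact, and the anticommutation $C_{\kappa,0}M_{0}+M_{0}C_{\kappa,0}\equiv0$ (a modulo-compact reduction of \eqref{Ck2}) shows after a short computation that \emph{$A$ and $B_{T}$ are $\mathcal{B}$-antisymmetric modulo compact}. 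Consequently $\Im\mathcal{B}(A\jj,\overline{\jj})$ and $\Im\mathcal{B}(B_{T}\jj,\overline{\jj})$ are compact bilinear forms, so that $\Im\mathcal{B}(\SS\jj,\overline{\jj})\equiv\eta\,\Re\mathcal{B}(BC_{0}^{*}\jj,\overline{\jj})$ and $\Re\mathcal{B}(\TT C_{0}^{*-1}\jj,\overline{\jj})\equiv\Re\mathcal{B}(A_{T}C_{0}^{*-1}\jj,\overline{\jj})$ modulo compact. Note that the factor $C_{0}^{*-1}$ in the statement for $\TT$ is exactly what converts the trailing $C_{0}^{*}$ of the $\eta$-part of $\TT$ into an elliptic form.

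To extract coercivity I would simplify the surviving products using $\Div_{\Gamma}\Rot_{\Gamma}=0$ from \eqref{eqd2}, which gives $(\Rot_{\Gamma}V_{0}\Div_{\Gamma})^{2}=0$, the fact that $(\nn\times V_{0})^{2}$ is of order $-2$ hence compact, and the reduced Calder\'on identity $C_{0}^{*2}\equiv M_{0}^{2}-\tfrac14\Id$. For $\TT$ this rewrites $A_{T}C_{0}^{*-1}$ modulo compact as $\tfrac12\,C_{0}^{*-1}+C_{0}^{*}-M_{0}C_{0}^{*-1}+\rho\,C_{\kappa_i,0}C_{\kappa_e,0}C_{0}^{*-1}$, whose real form is bounded below by $\tfrac12\mathcal{B}(C_{0}^{*-1}\jj,\overline{\jj})+\mathcal{B}(C_{0}^{*}\jj,\overline{\jj})$ — both positive definite by the ellipticity and invertibility of $C_{0}^{*}$ (Lemma~\ref{elliptic}) — provided the remaining terms are controlled. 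For $\SS$ one reduces $\Re\mathcal{B}(BC_{0}^{*}\jj,\overline{\jj})$ in the same way to a scalar multiple of the single elliptic form $\mathcal{B}(C_{0}^{*}\jj,\overline{\jj})$, the scalar being sign-definite under $\eta>0$ and positive $\mu_{i},\mu_{e},\kappa_{i},\kappa_{e}$; this is the Lipschitz analogue of the sign analysis of the diagonal factors in \eqref{estim1}--\eqref{estim2}, which is precisely where hypotheses~(i)--(ii) enter.

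The main obstacle is exactly the part glossed above: controlling the genuinely non-compact terms carrying $M_{0}$ (such as $M_{0}C_{0}^{*}$, $M_{0}^{2}C_{0}^{*}$, $M_{0}C_{0}^{*-1}$) and the mixed products $C_{\kappa_i,0}C_{\kappa_e,0}C_{0}^{*\pm1}$, which on a Lipschitz boundary cannot be discarded (unlike in the smooth case, where $M_{\kappa}$ is compact) and which are neither sign-definite nor obviously proportional to $\mathcal{B}(C_{0}^{*}\cdot,\cdot)$. The plan is to show, using the anticommutation relations and $C_{0}^{*2}\equiv M_{0}^{2}-\tfrac14\Id$ repeatedly, that their contribution to the relevant real part either cancels, collapses to a compact form, or merges with the leading elliptic term without destroying its sign. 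Making this bookkeeping airtight — proving that after all cancellations only positive multiples of $\mathcal{B}(C_{0}^{*}\jj,\overline{\jj})$ and $\mathcal{B}(C_{0}^{*-1}\jj,\overline{\jj})$ survive, with coefficients of the sign guaranteed by the hypotheses — is the crux of the argument.
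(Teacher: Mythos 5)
Your algebraic reduction is correct and matches the paper's: your $A+i\eta BC_{0}^{*}$ is, modulo compact operators, exactly the operator $\SS_{1}$ of the paper's proof, and your reality/parity bookkeeping for $\mathcal{B}$ is precisely how the paper disposes of the terms $\rho C_{\kappa_{e},0}+C_{\kappa_{i},0}$ and the $C_{\kappa,0}M_{0}$-type products (for real parameters these are real and $\mathcal{B}$-antisymmetric modulo compact, hence contribute only compact forms to the relevant imaginary or real part). But the proof collapses exactly at the point you label ``the crux'', and your proposed route through it is wrong. On a Lipschitz boundary, $\Re\,\mathcal{B}(BC_{0}^{*}\jj,\overline{\jj})$ does \emph{not} reduce to a scalar multiple of the single elliptic form $\mathcal{B}(C_{0}^{*}\jj,\overline{\jj})$: since $M_{0}$ is not compact, $\rho\big(-\tfrac12\Id+M_{0}\big)^{2}C_{0}^{*}$ cannot be replaced by $\tfrac{\rho}{4}C_{0}^{*}$, and the $M_{0}$-terms neither cancel nor collapse to compact forms. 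The paper's device is different: keep the perfect square intact and commute one factor through $C_{0}^{*}$ using $M_{0}C_{0}^{*}\equiv-C_{0}^{*}M_{0}$ modulo compact (obtained from $C_{0}^{*}=iC_{\kappa,0}$ at $\kappa=i$ together with \eqref{Ck2}), so that $\big(-\tfrac12\Id+M_{0}\big)^{2}C_{0}^{*}\equiv\big(-\tfrac12\Id+M_{0}\big)C_{0}^{*}\big(-\tfrac12\Id-M_{0}\big)$ and the associated quadratic form becomes the elliptic $C_{0}^{*}$-form evaluated at $\big(\tfrac12\Id+M_{0}\big)\jj$, hence bounded below by $\alpha_{1}\|\big(\tfrac12\Id+M_{0}\big)\jj\|^{2}_{\HH_{\times}^{-\frac{1}{2}}(\Div_{\Gamma},\Gamma)}$. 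Even this is not yet a G\r{a}rding inequality in $\jj$; the second missing ingredient is Lemma~\ref{lMk}: because $\tfrac12\Id+M_{0}$ is Fredholm of index zero, one has $\|\big(\tfrac12\Id+M_{0}\big)\jj\|^{2}\ge\alpha_{2}\|\jj\|^{2}-c_{2}(\jj,\overline{\jj})$ with $c_{2}$ compact. Your sketch never invokes this Fredholm lower bound (nor any substitute), and without it the leading term controls only $\big(\tfrac12\Id+M_{0}\big)\jj$, not $\jj$.

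The triple product is also not resolved by ``cancellation or merger''. With $S_{0}^{*}=\nn\times V_{0}$ and $T_{0}^{*}=\Rot_{\Gamma}V_{0}\Div_{\Gamma}$, the identities $(T_{0}^{*})^{2}=0$ and $(S_{0}^{*})^{2}$ compact (which you correctly list) reduce $C_{\kappa_{i},0}C_{\kappa_{e},0}C_{0}^{*}$ to $-\kappa_{i}\kappa_{e}^{-1}S_{0}^{*}T_{0}^{*}S_{0}^{*}-\kappa_{i}^{-1}\kappa_{e}T_{0}^{*}S_{0}^{*}T_{0}^{*}$ modulo compact, and the paper then shows each piece is \emph{separately} sign-definite: $-\mathcal{B}\big(S_{0}^{*}T_{0}^{*}S_{0}^{*}\jj,\overline{\jj}\big)=\int_{\Gamma}(\rot_{\Gamma}V_{0}\overline{\jj})\cdot V_{0}(\rot_{\Gamma}V_{0}\jj)\ge0$ by the $H^{-\frac12}(\Gamma)$-ellipticity of $V_{0}$, and likewise for $T_{0}^{*}S_{0}^{*}T_{0}^{*}$; the positivity of the coefficients is exactly where $\kappa_{i},\kappa_{e}>0$ from hypothesis~(i) enters. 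This explicit $V_{0}$-positivity is the second idea absent from your proposal. Finally, for $\TT$ your substitution $M_{0}^{2}\equiv C_{0}^{*2}+\tfrac14\Id$ gratuitously creates the intractable term $-M_{0}C_{0}^{*-1}$: observe instead that $A_{T}=\rho C_{\kappa_{i},0}C_{\kappa_{e},0}+\big(\tfrac12\Id-M_{0}\big)^{2}$ is already a perfect square plus the triple product, so the same two devices (anticommutation congruence through $C_{0}^{*-1}$, then Lemma~\ref{lMk}) close the argument — this is what the paper's terse ``similar arguments can be used for $\TT C_{0}^{*-1}$'' refers to. In short, your setup and parity analysis are sound and agree with the paper, but the two ideas that actually produce coercivity are missing, and your stated fallback (scalar reduction as in \eqref{estim1}--\eqref{estim2}) is false in the Lipschitz setting, since it is precisely the compactness of $M_{\kappa}$ that fails there.
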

\begin{proof} According to the definitions \eqref{Du2}--\eqref{S} and Lemma~\ref{3.4}, the operator $\SS$ is a compact perturbation of 
$$\SS_{1}=-i\eta\left(\rho\big(-\tfrac{1}{2}\Id+M_{0}\big)^2\!\!+\!C_{\kappa_{i},0}C_{\kappa_{e},0}\right)C_{0}^*-\tfrac{1}{2}\left(\rho C_{\kappa_{e},0}+C_{\kappa_{i},0}\right)+\rho C_{\kappa_{e},0}M_{0}+M_{0}C_{\kappa_{i},0}.
$$
Let $\jj\in\HH_{\times}^{-\frac{1}{2}}(\Div_{\Gamma},\Gamma)$. The term $\mathcal{B}(\left(\rho C_{\kappa_{e},0}+C_{\kappa_{i},0}\right)\jj,\overline{\jj})$ is real. We also have 
$$\mathcal{B}(C_{\kappa_{e},0}M_{0}\jj,\overline{\jj})=-\mathcal{B}(M_{0}\jj,C_{\kappa_{e},0}\overline{\jj})=\mathcal{B}(\jj,M_{0}C_{\kappa_{e},0}\overline{\jj})=-\mathcal{B}(M_{0}C_{\kappa_{e},0}\overline{\jj},\jj)
$$
The term $M_{0}C_{\kappa_{e},0}$ is a compact perturbation of $M_{\kappa_{e}}C_{\kappa_{e}}$ and $M_{\kappa_{e}}C_{\kappa_{e}}=-C_{\kappa_{e}}M_{\kappa_{e}}$, thus 
$$\mathcal{B}(C_{\kappa_{e},0}M_{0}\jj,\overline{\jj})=\overline{\mathcal{B}(C_{\kappa_{e},0}M_{0}\jj,\overline{\jj})}+\mathrm{compact}=\Re\mathcal{B}(C_{\kappa_{e},0}M_{0}\jj,\overline{\jj})+\mathrm{compact}$$
In the same way we have 
$$\mathcal{B}(M_{0}C_{\kappa_{i},0}\jj,\overline{\jj})=\Re\mathcal{B}(M_{0}C_{\kappa_{i},0}\jj,\overline{\jj})+\mathrm{compact}.$$

Using now the Calder\'on relations \eqref{Ck2} and the fact that 
$C_{0}^*= i C_{\kappa,0}$ for $\kappa=i$, we see that
$M_{0}C_{0}^*$ is a compact perturbation of $-C_{0}^*M_{0}$.
It follows that $\big(-\frac{1}{2}\Id+M_{0}\big)^2C_{0}^*$ is a compact perturbation of $\big(-\frac{1}{2}\Id+M_{0}\big)C_{0}^*\big(-\frac{1}{2}-M_{0}\big)$ and we have
$$\mathcal{B}\left(\big(-\tfrac{1}{2}\Id+M_{0}\big)C_{0}^*\big(-\tfrac{1}{2}\Id-M_{0}\big)\jj,\overline{\jj}\right)=\mathcal{B}\left(C_{0}^*\big(-\tfrac{1}{2}\Id-M_{0}\big)\jj,\big(-\tfrac{1}{2}\Id-M_{0}\big)\overline{\jj}\right)$$
  Since $C_{0}^*$ is elliptic for the bilinear form $\mathcal{B}$ we have with Lemma~\ref{lMk}
$$
 \begin{aligned}-\mathcal{B}\left(\big(-\tfrac{1}{2}\Id+M_{0}\big)^2C_{0}^*\jj,\overline{\jj}\right)+c_{1}(\jj,\overline{\jj})&=-\mathcal{B}\left(C_{0}^*\big(\tfrac{1}{2}\Id+M_{0}\big)\jj,\big(\tfrac{1}{2}\Id+M_{0}\big)\overline{\jj}\right)\\
 &\ge \alpha_{1}||\big(\tfrac{1}{2}\Id+M_{0}\big)\jj||_{\HH_{\times}^{-\frac{1}{2}}(\Div_{\Gamma},\Gamma)}^{2}\\ 
 &\ge \alpha_{2}||\jj||_{\HH_{\times}^{-\frac{1}{2}}(\Div_{\Gamma},\Gamma)}^{2}
 - c_{2}(\jj,\overline{\jj})\,,
 \end{aligned}
$$ 
where $c_{1}(\cdot,\cdot)$ and $c_{2}(\cdot,\cdot)$ are compact bilinear forms and $\alpha_{1}$ and $\alpha_{2}$ are positive constants.

Now for brevity write $S_{0}^*=\nn\times V_{0}$ and $T_{0}^*=\Rot_{\Gamma}V_{0}\Div_{\Gamma}$.
Taking into account that $(T_{0}^*)^{2}=0$ and that 
$(S_{0}^*)^{2}:\HH_{\times}^{-\frac{1}{2}}(\Div_{\Gamma},\Gamma)\to \HH_{\times}^{-\frac{1}{2}}(\Div_{\Gamma},\Gamma)$ is compact (it maps continuously into $\HH_{\times}^{0}(\Div_{\Gamma},\Gamma)$ which is compactly imbedded in $\HH_{\times}^{-\frac{1}{2}}(\Div_{\Gamma},\Gamma)$), we get from the definitions
$$
C_{\kappa_{i},0}C_{\kappa_{e},0}C_{0}^*=-\kappa_{i}\kappa_{e}^{-1}S_{0}^*T_{0}^*S_{0}^*-\kappa_{i}^{-1}\kappa_{e}T_{0}^*S_{0}^*T_{0}^*+\mathrm{compact}.
$$
The terms $S_{0}^*T_{0}^*S_{0}^*$ and $T_{0}^*S_{0}^*T_{0}^*$ give positive contributions, namely we have
$$
\begin{aligned}
 -\mathcal{B}\big(S_{0}^*T_{0}^*S_{0}^*\jj,\overline{\jj}\big)&=\int_{\Gamma}(\rot_{\Gamma}V_{0}\overline{\jj})\cdot V_{0}(\rot_{\Gamma}V_{0}\jj)\ge
0\\
 -\mathcal{B}\big(T_{0}^*S_{0}^*T_{0}^*\jj,\overline{\jj}\big)&=\int_{\Gamma}(\Rot_{\Gamma}V_{0}\Div_{\Gamma}\overline{\jj})\cdot V_{0}(\Rot_{\Gamma}V_{0}\Div_{\Gamma}\jj)\ge
0
\end{aligned}
$$
Therefore there exists a compact bilinear form $c_{3}$ 
such that 
$$
 -\mathcal{B}(C_{\kappa_{i},0}C_{\kappa_{e},0}C_{0}^*\jj,\overline{\jj})+c_{3}(\jj,\,\overline{\jj})\ge 0\,.
$$
Collecting all the results, we can write
$$\Im\mathcal{B}\big(\SS\jj,\overline{\jj}\big)=\eta\left(-\mathcal{B}\left(C_{0}^*\big(\tfrac{1}{2}\Id+M_{0}\big)\jj,\big(\tfrac{1}{2}\Id+M_{0}\big)\overline{\jj}\right)-\mathcal{B}(C_{\kappa_{i},0}C_{\kappa_{e},0}C_{0}^*\jj,\overline{\jj})\right)-c_{S}(\jj,\,\overline{\jj})
$$ 
where
$-c_{S}(\jj,\overline{\jj})=\eta\big(c_{1}(\jj,\overline{\jj})+c_{2}(\jj,\overline{\jj})+c_{3}(\jj,\overline{\jj})\big)+\Im\mathcal{B}\big((\rho C_{\kappa_{e},0}M_{0}+M_{0}C_{\kappa_{i},0})\jj,\overline{\jj})+\mathcal{B}\big((\SS-\SS_{1})\jj,\overline{\jj})$ is a compact bilinear form and 
$$
 \Im\left(\mathcal{B}\big(\SS\jj,\overline{\jj}\big)+c_{S}(\jj,\overline{\jj})\right)\ge C_{S}||\jj||_{\HH_{\times}^{-\frac{1}{2}}(\Div_{\Gamma},\Gamma)}
$$ 
with $C_{S}=\eta\alpha_{2}$. Similar arguments can be used for the operator $\TT C_{0}^{*-1}$.
\end{proof}

\section{Integral equations 2}  \label{IE2}

 The second method is based on a layer ansatz for the interior field: We assume that the interior electric field $\EE^{i}$  can be represented either by
   $\Psi_{E_{\kappa_{i}}}\jj$ or by $\Psi_{M_{\kappa_{i}}}\jj$ where the density $\jj\in\HH^{-\frac{1}{2}}_{\times}(\Div_{\Gamma},\Gamma)$ is the unknown function we have to determine. We begin with the Stratton-Chu representation formula \eqref{(3.2)} in $\Omega^c$:
   \begin{equation}\label{ri2u2} \EE^{s}(x) = \Psi_{E^{s}}\gamma_{N_{\kappa_{e}}}^c\left(\EE^s+\EE^{inc}\right)(x)
   + \Psi_{M_{\kappa_{e}}}\gamma_{D}^c\left(\EE^s+\EE^{inc}\right)(x)\qquad x\in\Omega^c\end{equation}

We then apply the exterior traces $\gamma_{D}^c$ and $\gamma_{N_{\kappa_{e}}}^c$ and use on both sides of \eqref{ri2u2} the transmission  conditions. The result is a relation between the traces of $\EE^{i}$ on $\Gamma$:
\begin{eqnarray}
  \label{rigdu2}  
  \gamma_{D}\EE^{i}-\gamma_{D}^c\EE^{inc}&=& - \rho C_{\kappa_{e}}\gamma_{N_{\kappa_{i}}}\EE^{i} +
   \left(-\tfrac{1}{2}\Id + M_{\kappa_{e}}\right)\gamma_{D}\EE^{i},\\\label{rignu2}\hspace{.5cm}
    \rho\gamma_{N_{\kappa_{i}}}\EE^i-\gamma_{N_{\kappa_{e}}}^c\EE^{inc}&=& -C_{\kappa_{e}}\gamma_{D}\EE^{i} +
   \rho\left(-\tfrac{1}{2}\Id + M_{\kappa_{e}}\right)\gamma_{N_{\kappa_{i}}}\EE^{i}.
\end{eqnarray} 
     In the scalar case, to construct the integral equations one would simply take a linear combination of \eqref{rigdu2} and \eqref{rignu2}. Here we multiply \eqref{rigdu2} by $a$ and \eqref{rignu2} by $bC_{0}^{*}$ and subtract to obtain:
\begin{equation}
\label{eqi2} 
 \rho L'_{e}\gamma_{N_{\kappa_{i}}}\EE^{i} -
    N'_{e}\gamma_{D}\EE^{i} = h \qquad \text{ sur }\Gamma
\end{equation}
where the operators $L'_{e}$ and $N'_{e}$ are defined for all
    $\jj\in \HH_{\times}^{-\frac{1}{2}}(\Div_{\Gamma},\Gamma)$ by :
\begin{center}  $L'_{e}\jj =\left\{aC_{\kappa_{e}}-bC_{0}^{*}\left(\tfrac{1}{2}\Id+M_{\kappa_{e}}\right)\right\}\jj,$ 
\end{center} 
\begin{center} $N'_{e}\jj=\left\{-a\left(\tfrac{1}{2}\Id+M_{\kappa_{e}}\right) +bC_{0}^{*}C_{\kappa_{e}}\right\}\jj,$
\end{center}
    and
\begin{equation}  h= a\gamma_{D}\EE^{inc}
    -bC_{0}^{*}\gamma_{N_{\kappa_{e}}}\EE^{inc}\in
    \HH_{\times}^{-\frac{1}{2}}(\Div_{\Gamma},\Gamma).
\end{equation}
    
     If $\EE^{i}$ is  represented by the potential $\Psi_{E_{\kappa_{i}}}$  applied
    to a density $\jj\in \HH_{\times}^{-\frac{1}{2}}(\Div_{\Gamma},\Gamma)$    
\begin{equation}
\label{riu1}  
  \EE^{i}(x) = -(\Psi_{E_{\kappa_{i}}}\jj)(x),\qquad  x\in\Omega,
\end{equation} 
we obtain
\begin{equation}
\label{gu1}  
  \gamma_{D}\EE^{i}=C_{\kappa_{i}}\jj \qquad \text{ and }
    \qquad\gamma_{N_{\kappa_{i}}}\EE^{i}=\left(\tfrac{1}{2}\Id + M_{\kappa_{i}}\right)\jj \qquad\text{ on }\Gamma
\end{equation}
    
     Substituting \eqref{gu1} in \eqref{eqi2}, we obtain a first integral equation:
\begin{equation}
\label{St}  
  \SS'\jj \equiv \left\{\rho L'_{e}\left(\tfrac{1}{2}\Id +
    M_{\kappa_{i}}\right) -
    N'_{e}C_{\kappa_{i}}\right\}\jj = h\qquad \text{ on }\Gamma
\end{equation}
   
This is an integral equation for the unknown 
$\jj\in \HH_{\times}^{-\frac{1}{2}}(\Div_{\Gamma},\Gamma)$. 
Having solved this equation, we construct $\EE^{i}$ and $\EE^{s}$ by the representations \eqref{riu1} in $\Omega$ and
\begin{equation} 
\label{riu22} 
  \EE^{s} = \rho\left(\Psi_{E_{\kappa_{e}}}\big(\tfrac{1}{2}\Id +
    M_{\kappa_{i}}\big)\jj\right)(x) + \left(\Psi_{M_{\kappa_{e}}}C_{\kappa_{i}}\jj\right)(x) \qquad x\in\Omega^c.\end{equation}
    
      If $\EE^{i}$ is represented by the  potential
    $\Psi_{M_{\kappa_{i}}}$ applied to a density $\mm\in
    \HH_{\times}^{-\frac{1}{2}}(\Div_{\Gamma},\Gamma)$
\begin{equation}
\label{riu12}  
  \EE^{i}(x) = -(\Psi_{M_{\kappa_{i}}}\mm)(x),\qquad x\in\Omega,
\end{equation}
we obtain:
\begin{equation}
\label{gnu1}  
  \gamma_{D}\EE^{i} = \left(\tfrac{1}{2}I + M_{\kappa_{i}}\right)\mm\qquad  and
    \qquad\gamma_{N_{\kappa_{i}}}\EE^{i} = C_{\kappa_{i}}\mm\qquad\text{ on }\Gamma.
\end{equation}
    
     Substituting \eqref{gnu1} in \eqref{eqi2}, we obtain  a second integral equation:
     \begin{equation} \label{Tt} \TT'\mm \equiv \left\{\rho L'_{e}C_{\kappa_{i}} -
     N'_{e}\left(\tfrac{1}{2}\Id +
     M_{\kappa_{i}}\right)\right\}\mm = h\qquad\text{ on }\Gamma.\end{equation}
     
      This is an integral equation for the unknown $\mm\in
     \HH_{\times}^{-\frac{1}{2}}(\Div_{\Gamma},\Gamma)$. Having solved this
     equation, we construct  $\EE^{i}$ and $\EE^{s}$ by the representations \eqref{riu12} in $\Omega$ and:
\begin{equation}
\label{riu23}  
   \EE^{s}(x) = \rho\left(\Psi_{E_{\kappa_{e}}}C_{\kappa_{i}}\mm\right)(x) +
     \left(\Psi_{M_{\kappa_{e}}}\big(\tfrac{1}{2}\Id + M_{\kappa_{i}}\big)\mm\right)(x), \qquad x\in\Omega^c.
\end{equation}
     
       Contrary to the preceding method from the previous section, the two integral equations  are not equivalent, in general. The following theorem corresponds to Theorem~\ref{solpt}. The proof is similar to the scalar case.
      
\begin{theorem}
\label{solpt2} 
 We assume that  $\kappa_{e}^2$ is not an eigenvalue of the associated interior problem \eqref{intp}.\newline
     If $\jj\in \HH_{\times}^{-\frac{1}{2}}(\Div_{\Gamma},\Gamma)$ 
     solves \eqref{St}, then $\EE^{i}$ and $\EE^{s}$, given by \eqref{riu1} and \eqref{riu22}
     respectively, solve the dielectric scattering problem.\newline If
     $\mm\in \HH_{\times}^{-\frac{1}{2}}(\Div_{\Gamma},\Gamma)$ solves \eqref{Tt},
     then $\EE^{i}$ and $\EE^{s}$, given by \eqref{riu12} and \eqref{riu23} respectively, 
     solve the dielectric scattering problem.\end{theorem}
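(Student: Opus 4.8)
The plan is to check that the constructed pair $(\EE^{i},\EE^{s})$ meets the four requirements of the scattering problem. Three of them come for free from the layer representations: by Lemma~\ref{3.2}, $\EE^{i}=-\Psi_{E_{\kappa_{i}}}\jj$ (resp.\ $-\Psi_{M_{\kappa_{i}}}\mm$) solves $\Rot\Rot\EE^{i}-\kappa_{i}^{2}\EE^{i}=0$ in $\Omega$, and $\EE^{s}$ from \eqref{riu22} (resp.\ \eqref{riu23}), being a combination of $\Psi_{E_{\kappa_{e}}}$ and $\Psi_{M_{\kappa_{e}}}$, solves $\Rot\Rot\EE^{s}-\kappa_{e}^{2}\EE^{s}=0$ in $\Omega^{c}$ and satisfies the Silver--M\"uller condition. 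Hence only the two transmission conditions \eqref{T1} and \eqref{T2} remain, and the whole proof is about them.

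First I would record the two transmission residuals
\[
 d:=\gamma_{D}^{c}\EE^{s}+\gamma_{D}^{c}\EE^{inc}-\gamma_{D}\EE^{i},
 \qquad
 n:=\gamma_{N_{\kappa_{e}}}^{c}\EE^{s}+\gamma_{N_{\kappa_{e}}}^{c}\EE^{inc}-\rho\,\gamma_{N_{\kappa_{i}}}\EE^{i},
\]
so that \eqref{T1}--\eqref{T2} hold exactly when $d=n=0$. Inserting the trace formulas $\gamma_{D}\EE^{i}=C_{\kappa_{i}}\jj$, $\gamma_{N_{\kappa_{i}}}\EE^{i}=(\tfrac12\Id+M_{\kappa_{i}})\jj$ from \eqref{gu1} (resp.\ \eqref{gnu1}) and computing $\gamma_{D}^{c}\EE^{s}$, $\gamma_{N_{\kappa_{e}}}^{c}\EE^{s}$ from the representation of $\EE^{s}$ by means of the exterior trace identities $\gamma_{D}^{c}\Psi_{E_{\kappa_{e}}}=-C_{\kappa_{e}}$, $\gamma_{D}^{c}\Psi_{M_{\kappa_{e}}}=\tfrac12\Id-M_{\kappa_{e}}$, $\gamma_{N_{\kappa_{e}}}^{c}\Psi_{E_{\kappa_{e}}}=\tfrac12\Id-M_{\kappa_{e}}$, $\gamma_{N_{\kappa_{e}}}^{c}\Psi_{M_{\kappa_{e}}}=-C_{\kappa_{e}}$ (which follow from the jump relations together with $\gamma_{N_{\kappa}}\Psi_{E_{\kappa}}=\gamma_{D}\Psi_{M_{\kappa}}$), and finally recalling the definitions of $L'_{e}$, $N'_{e}$ and $h$, I expect to reach the identity
\[
 a\,d-b\,C_{0}^{*}\,n=-(\SS'\jj-h)
\]
in the first case, and the same identity with $\TT'\mm$ in the second. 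Consequently, solving \eqref{St} (resp.\ \eqref{Tt}) is equivalent to the single relation $a\,d=b\,C_{0}^{*}\,n$. This reduction is elementary, but it is the place where the bookkeeping of signs has to be done carefully.

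The decisive step is to promote this one relation to the two separate equalities $d=0$ and $n=0$; here the associated interior problem enters. I would introduce the field $\vv=\ww+\EE^{inc}$ on $\Omega$, where $\ww$ denotes the right-hand side of \eqref{riu22} (resp.\ \eqref{riu23}) evaluated at interior points $x\in\Omega$. Since $\EE^{inc}$ satisfies $\Rot\Rot\EE^{inc}-\kappa_{e}^{2}\EE^{inc}=0$ near $\overline{\Omega}$ and $\ww$ is generated by $\kappa_{e}$-potentials, $\vv$ solves $\Rot\Rot\vv-\kappa_{e}^{2}\vv=0$ in $\Omega$. Applying the jump relations $[\gamma_{D}]\Psi_{E_{\kappa_{e}}}=0$, $[\gamma_{D}]\Psi_{M_{\kappa_{e}}}=-\Id$, $[\gamma_{N_{\kappa_{e}}}]\Psi_{E_{\kappa_{e}}}=-\Id$, $[\gamma_{N_{\kappa_{e}}}]\Psi_{M_{\kappa_{e}}}=0$ to the common representation $\EE^{s}=\rho\,\Psi_{E_{\kappa_{e}}}\gamma_{N_{\kappa_{i}}}\EE^{i}+\Psi_{M_{\kappa_{e}}}\gamma_{D}\EE^{i}$ gives $\gamma_{D}\ww=\gamma_{D}^{c}\EE^{s}-\gamma_{D}\EE^{i}$ and $\gamma_{N_{\kappa_{e}}}\ww=\gamma_{N_{\kappa_{e}}}^{c}\EE^{s}-\rho\,\gamma_{N_{\kappa_{i}}}\EE^{i}$, so that, using the continuity of $\EE^{inc}$ across $\Gamma$,
\[
 \gamma_{D}\vv=d,\qquad \gamma_{N_{\kappa_{e}}}\vv=n.
\]
Together with $a\,d=b\,C_{0}^{*}\,n$ this means precisely that $a\gamma_{D}\vv-bC_{0}^{*}\gamma_{N_{\kappa_{e}}}\vv=0$, i.e.\ $\vv$ is a solution of the associated interior problem \eqref{intp} at the frequency $\kappa_{e}^{2}$. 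Since $\kappa_{e}^{2}$ is assumed not to be an eigenvalue of \eqref{intp}, we conclude $\vv\equiv0$ in $\Omega$, whence $d=\gamma_{D}\vv=0$ and $n=\gamma_{N_{\kappa_{e}}}\vv=0$; both transmission conditions hold and $(\EE^{i},\EE^{s})$ solves the scattering problem.

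Finally, the two statements---for $\jj$ solving \eqref{St} and for $\mm$ solving \eqref{Tt}---are settled by one and the same computation, because in both cases the constructed exterior field can be written uniformly as $\EE^{s}=\rho\,\Psi_{E_{\kappa_{e}}}\gamma_{N_{\kappa_{i}}}\EE^{i}+\Psi_{M_{\kappa_{e}}}\gamma_{D}\EE^{i}$; only the representation of $\EE^{i}$ and the corresponding trace formulas \eqref{gu1}/\eqref{gnu1} differ, and neither enters the argument above. The conclusion is moreover insensitive to whether $a$ or $b$ vanishes, since in each of those cases the boundary condition on $\vv$ degenerates to a pure Dirichlet or a pure Neumann condition while the invertibility of $C_{0}^{*}$ keeps the equivalence $a\,d=b\,C_{0}^{*}\,n$ meaningful. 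The only real obstacle is therefore the trace computation of the second paragraph: once the combination $a\,d-bC_{0}^{*}\,n$ has been correctly identified with $-(\SS'\jj-h)$ (resp.\ $-(\TT'\mm-h)$), the non-eigenvalue hypothesis closes the proof at once.
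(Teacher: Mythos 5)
Your proposal is correct and follows essentially the same route as the paper: you reduce the problem to the two transmission residuals, identify the combination $a\,d-b\,C_{0}^{*}n$ with $-(\SS'\jj-h)$ (resp.\ $-(\TT'\mm-h)$), and then introduce the interior field $\vv$ whose traces are exactly $d$ and $n$, so that the non-eigenvalue hypothesis for \eqref{intp} forces $\vv\equiv0$ and hence $d=n=0$ --- this is precisely the paper's argument, up to your opposite sign convention for $\vv$. Your only departure is cosmetic but pleasant: by writing the exterior field uniformly as $\EE^{s}=\rho\,\Psi_{E_{\kappa_{e}}}\gamma_{N_{\kappa_{i}}}\EE^{i}+\Psi_{M_{\kappa_{e}}}\gamma_{D}\EE^{i}$ and using the jump relations once, you handle both equations \eqref{St} and \eqref{Tt} in a single computation, where the paper carries out the trace calculation explicitly for \eqref{Tt} and refers to similarity for \eqref{St}.
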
 
     
\begin{proof} 
In each case the integral representations of $\EE^{i}$ and
     $\EE^{s}$ satisfy the Maxwell equations and the Silver-M\"uller  condition. It remains  to  prove that   the transmission conditions are satisfied. We  prove it for the equation \eqref{Tt}, the arguments being similar for \eqref{St}.\newline
     Assume that $\mm$ solves \eqref{Tt} which we rewrite as :
\begin{equation}
\label{eqi22}
  \begin{array}{rl}  a\left\{\rho C_{\kappa_{e}}C_{\kappa_{i}}\mm + \left(\tfrac{1}{2}\Id +
     M_{\kappa_{e}}\right)\left(\tfrac{1}{2}\Id + M_{\kappa_{i}}\right)\mm  - \gamma_{D}\EE^{inc}\right\} \hspace{.5cm}&\vspace{2mm} \\  -bC_{0}^{*}\left\{\rho\left(\tfrac{1}{2}\Id
     + M_{\kappa_{e}}\right)C_{\kappa_{i}}\mm + C_{\kappa_{e}}\left(\tfrac{1}{2}\Id + M_{\kappa_{i}}\right)\mm - \gamma_{N_{\kappa_{e}}}\EE^{inc}\right\}&
     = 0. \end{array}
\end{equation}
      Then, using the integral representation  \eqref{riu23} of $\EE^s$, we obtain :
\begin{align*} 
  (\gamma_{D}^c\EE^{s} +\gamma_{D}^c\EE^{inc}
      -\gamma_{D}\EE^{i}) &= -\rho C_{\kappa_{e}}C_{\kappa_{i}}\mm -
     \left(\tfrac{1}{2}\Id + M_{\kappa_{e}}\right)\left(\tfrac{1}{2}\Id + M_{\kappa_{i}}\right)\mm +\gamma_{D}\EE^{inc},
\\  
   (\gamma_{N_{\kappa_{e}}}^c\EE^{s} \!+\!\gamma_{N_{\kappa_{e}}}^c \!\EE^{inc}\!\!
     -\rho\gamma_{N_{\kappa_{i}}}\EE^{i})&= -\rho\left(\tfrac{1}{2}\Id \!+\!
     M_{\kappa_{e}}\right)C_{\kappa_{i}}\mm - C_{\kappa_{e}}\left(\tfrac{1}{2}\Id \!+\! M_{\kappa_{i}}\right)\mm +\gamma_{N_{\kappa_{e}}}^c\!\EE^{inc}.
\end{align*}
     
      We have to show that the right hand  sides of these equalities vanish.\newline
       We introduce the   function $\vv$ defined on $\Omega$ by 
     
     \begin{center} $\vv(x) = -\rho\Psi_{E_{\kappa_{e}}}C_{\kappa_{i}}\mm - \Psi_{M_{\kappa_{e}}}\left(\tfrac{1}{2}\Id
     + M_{\kappa_{i}}\right)\mm - \EE^{inc}$.\end{center}
     
      By equation \eqref{eqi22} we have 
$a\gamma_{D}\vv - bC_{0}^{*}\gamma_{N_{\kappa_{e}}}\vv = 0$. 
Since $\EE^{inc}$ satisfies the Maxwell system 
$\Rot\Rot\vv-\kappa_{e}^2\vv=0$ in $\Omega$, $\vv$ satisfies it, too. By  hypothesis, $\kappa_{e}^2$ is not an eigenvalue of the associated interior problem, which implies $\vv\equiv 0$ in $\Omega$. In particular, $\gamma_{D}\vv$ and
      $\gamma_{N_{\kappa_{e}}}\vv$ vanish, which shows that the above right hand sides are indeed zero and that the transmission conditions are satisfied.
\end{proof}

\begin{theorem}
Assume that the hypotheses of Theorem~\ref{t1} are satisfied and that $\kappa_{e}^2$ is not an eigenvalue of the associated interior problem \eqref{intp}.
  Then the  operators $\SS'$ and $\TT'$ are injective.
\end{theorem}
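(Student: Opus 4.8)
The plan is to reduce injectivity to two uniqueness statements: the uniqueness of the transmission problem (Theorem~\ref{t1}), which becomes available through the representation result Theorem~\ref{solpt2}, together with the uniqueness of the exterior radiating Maxwell problem with wave number $\kappa_i$. I would treat $\SS'$ first; the argument for $\TT'$ is the exact analogue with $\Psi_{M_{\kappa_i}}$ in place of $\Psi_{E_{\kappa_i}}$.

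Suppose then that $\jj\in\HH_{\times}^{-\frac12}(\Div_{\Gamma},\Gamma)$ solves $\SS'\jj=0$. Taking $\EE^{inc}=0$ makes $h=0$, so $\jj$ solves \eqref{St} for this incident field; since $\kappa_e^2$ is not an eigenvalue of \eqref{intp}, Theorem~\ref{solpt2} shows that $\EE^{i}=-\Psi_{E_{\kappa_i}}\jj$ and the field $\EE^{s}$ of \eqref{riu22} together solve the homogeneous dielectric scattering problem. Under the hypotheses of Theorem~\ref{t1} this problem has only the trivial solution, hence $\EE^{i}\equiv0$ in $\Omega$.

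The next step passes from the field $\EE^{i}$ to the potential $\ww=\Psi_{E_{\kappa_i}}\jj$ itself. Since $\ww=-\EE^{i}\equiv0$ in $\Omega$, all its interior traces vanish; in particular $\gamma_{D}\ww=0$, and the jump relation $[\gamma_{D}]\Psi_{E_{\kappa_i}}=0$ then yields $\gamma_{D}^c\ww=\gamma_{D}\ww=0$. Thus $\ww$ solves $(\Rot\Rot-\kappa_i^2\Id)\ww=0$ in $\Omega^c$, satisfies the Silver-M\"uller condition (Lemma~\ref{3.2}), and has vanishing exterior Dirichlet trace. Uniqueness for the exterior Maxwell problem with wave number $\kappa_i$ forces $\ww\equiv0$ in $\Omega^c$ as well. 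Both Neumann traces of $\ww$ therefore vanish, and the jump relation $[\gamma_{N_{\kappa_i}}]\Psi_{E_{\kappa_i}}=-\Id$ gives $\jj=-[\gamma_{N_{\kappa_i}}]\ww=0$. For $\TT'$ the same scheme applies to $\ww=\Psi_{M_{\kappa_i}}\mm$: from $\EE^{i}=-\Psi_{M_{\kappa_i}}\mm\equiv0$ one gets $\gamma_{N_{\kappa_i}}\ww=0$, and the jump relation $[\gamma_{N_{\kappa_i}}]\Psi_{M_{\kappa_i}}=0$ gives the vanishing exterior Neumann trace $\gamma_{N_{\kappa_i}}^c\ww=0$; exterior uniqueness, applied to $\kappa_i^{-1}\Rot\ww$ whose exterior Dirichlet trace is then zero, gives $\ww\equiv0$ in $\Omega^c$, and $[\gamma_{D}]\Psi_{M_{\kappa_i}}=-\Id$ yields $\mm=0$.

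The bookkeeping with the jump relations is routine. The step that deserves the most care, and which is the genuinely new ingredient beyond the transmission uniqueness already used, is the exterior uniqueness for the wave number $\kappa_i$: it is indispensable, because the two interior Calder\'on identities $C_{\kappa_i}\jj=0$ and $(\tfrac12\Id+M_{\kappa_i})\jj=0$ that follow from $\EE^{i}\equiv0$ via \eqref{gu1} are mutually consistent (they are compatible with \eqref{Ck2}) and so cannot by themselves force $\jj=0$. This exterior uniqueness holds under the physically relevant assumption that $\kappa_i$ is real and positive (Rellich's lemma) or $\Im\kappa_i>0$ (an energy estimate), which I would state explicitly as part of the hypotheses.
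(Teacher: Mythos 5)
Your proof is correct and takes essentially the same route as the paper: kill the interior field via Theorem~\ref{solpt2} combined with the uniqueness of Theorem~\ref{t1}, extend the layer potential to $\Omega^c$, invoke exterior uniqueness for the radiating Maxwell problem at wave number $\kappa_{i}$ (which the paper uses implicitly, justified by the standing assumption $\Im\kappa_{i}\ge 0$ that you rightly flag), and finish with the jump relations to conclude that the density vanishes. The paper carries this out explicitly for $\TT'$ and declares $\SS'$ ``similar,'' exactly as in your proposal.
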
 
 
\begin{proof} 
We prove the result for  the operator $\TT'$, similar  arguments being valid for $\SS'$.\newline
     Assume that $\mm_{0}\in
   \HH_{\times}^{-\frac{1}{2}}(\Div_{\Gamma},\Gamma)$ solves the homogeneous equation :
\begin{equation} 
\label{homT} 
  \TT'\mm_{0}=\rho L'_{e}C_{\kappa_{i}}\mm_{0} -
   N'_{e}\left(\tfrac{1}{2}\Id + M_{\kappa_{i}}\right)\mm_{0} = 0.
\end{equation}
    We want to show that  $\mm_{0} = 0$.\newline
    We construct  $\vv_{1}$ and $\vv_{2}$ as follows:
   \begin{center} $\vv_{2}(x) = \rho(\Psi_{E_{\kappa_{e}}}C_{\kappa_{i}}\mm_{0})(x) +
     \left(\Psi_{M_{\kappa_{e}}}\left\{\tfrac{1}{2}\Id + M_{\kappa_{i}}\right\}\mm_{0}\right)(x), \qquad x\in\Omega^c$,\end{center}
    and \begin{center} $\vv_{1}(x) = -(\Psi_{M_{\kappa_{i}}}\mm_{0})(x),\qquad x\in
    \Omega$.
\end{center} 

By Theorem \ref{solpt2}, these functions solve the homogeneous scattering problem  (i.e. when $\EE^{inc}\equiv0$), and therefore 
    $\vv_{1}\equiv 0$ in $\Omega$ and $\vv_{2}\equiv 0$ in $\Omega^c$. Now
    we define  \begin{center} $\vv(x) = -(\Psi_{M_{\kappa_{i}}}\mm_{0})(x)\qquad x\in
    \Omega^c$\end{center}
    We have $\gamma_{N_{\kappa_{i}}}^c\vv=C_{\kappa_{i}}\mm_{0}=\gamma_{N_{\kappa_{i}}}\vv_{1}=0$. Since $\vv$ satisfies the Silver-M\"uller condition, we have $\vv\equiv0$ in $\Omega^c$. Thus $\vv\equiv0$ is $\R^3$ and $[\gamma_{D}]\vv=\mm_{0}=0$. 
 \end{proof}

\begin{remark} The operators $\SS'$ and
$\TT'$ are the dual operators of  $\,\SS$ and $\TT$, respectively, for the bilinear form $\mathcal{B}$.  Therefore  they are  Fredholm of index zero under the same hypotheses as those given in Theorems \ref{thT1}, \ref{thT2} and \ref{ThLip}. 
\end{remark}

In order that each of the four integral equations   admit a unique solution for all positive real values of $\kappa_{e}$, we will now give an example of how to choose the constants $a$ and $b$ such that the associated interior problem does not admit any real eigenvalue.

We summarize all the previous results in the final theorem.
\begin{theorem}
\label{thT}
Assume that the boundary $\Gamma$ is smooth and simply connected and
\begin{itemize}
\item[(i)]  $\kappa_{e}$ is a positive real number or $\Im(\kappa_{e})>0$ and $\Re(\kappa_{e}^2)\not=0$, 
\item[(ii)] $a=1$ and $b=\left\{\begin{array}{ll}i\eta&\text{ with $\eta\in\R\backslash\{0\}$ if $\kappa^2_{e}\in\R$} \\-i\eta\kappa_{e}\cdot\mathrm{sign}(\Im(\kappa_{e}^2))&\text{ with $\eta\in\R$, $\eta>0$ otherwise,}\end{array}\right.$
\item[(iii)] $\dfrac{\mu_{i}}{\mu_{e}}\not=-1$, $\dfrac{\mu_{e}\kappa_{i}^2}{\mu_{i}\kappa_{e}^2}\not=-1$.
\end{itemize}
  Then the operators $\SS$, $\TT$, $\SS'$ and $\TT'$ are invertible in 
  $\HH_{\times}^{-\frac{1}{2}}(\Div_{\Gamma},\Gamma)$. 
 Moreover, given the electric incident field  $\EE^{inc}\in\HH_{\loc}(\Rot,\R^3)$,
 the four integral equations  
 \eqref{S}, \eqref{T}, \eqref{St}, \eqref{Tt}
 each have a unique solution, and
 the integral representations {\rm \{\eqref{u2c}, \eqref{u12}\}}, {\rm\{\eqref{riu1},  \eqref{riu22}\}} and {\rm\{\eqref{riu12}, \eqref{riu23}\}} of $\EE^{i}$ and $\EE^{s}$ give the  solution of the dielectric scattering problem.\\
If $\Gamma$ is only Lipschitz, then the conclusions remain valid if the conditions (i) to (iii) are replaced by the more restrictive assumptions
\begin{itemize}
\item[(iv)] $\mu_{e}$, $\mu_{i}$, $\kappa_{e}$ and $\kappa_{i}$ are  positive real numbers.
\item[(v)] $a=1$ and $b=-i\eta$ with $\eta\in\R$, $\eta>0$,
\end{itemize}
 
 \end{theorem}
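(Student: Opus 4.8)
The plan is to read this final statement as an assembly of the results of Sections~\ref{IE1} and \ref{IE2}: invertibility of each of the four operators $\SS$, $\TT$, $\SS'$, $\TT'$ will follow by combining (a) that the operator is Fredholm of index zero with (b) that it is injective, since an injective Fredholm operator of index zero is bijective with bounded inverse. The linchpin of both ingredients is that, for the choice of $(a,b)$ in hypothesis~(ii), the number $\kappa_e^2$ is not an eigenvalue of the associated interior problem \eqref{intp}, so I would establish this first. It reduces to checking the hypotheses of Lemma~\ref{lem}. If $\kappa_e$ is real (so $\kappa_e^2\in\R$), the choice $b=i\eta$ with $\eta\ne0$ gives $a/b=-i/\eta$, whence $\Im(a/b)=-1/\eta\ne0$ and Lemma~\ref{lem}(i) applies. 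If $\kappa_e\in\C\setminus\R$ with $\Im(\kappa_e^2)\ne0$, writing $s=\mathrm{sign}(\Im(\kappa_e^2))$ and $b=-i\eta\kappa_e s$ with $\eta>0$ gives $\kappa_e a/b=i/(\eta s)$, so that $\Im(\kappa_e^2)\,\Im(\kappa_e a/b)=|\Im(\kappa_e^2)|/\eta>0$ and Lemma~\ref{lem}(ii) applies. Alongside this I would record that the standing assumptions place us inside the hypotheses of the uniqueness Theorem~\ref{t1}: in the Lipschitz case~(iv) this is immediate from positivity of all constants, and in the smooth case it follows from the standing physical assumptions of Section~2.

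With the non-eigenvalue property and Theorem~\ref{t1} in hand, injectivity is immediate. By Theorem~\ref{vp}, the homogeneous equations \eqref{S0} and \eqref{T0} admit only the trivial solution exactly because $\kappa_e^2$ is not an eigenvalue of \eqref{intp}, so $\SS$ and $\TT$ are injective; the injectivity of $\SS'$ and $\TT'$ follows in the same way from the corresponding injectivity theorem, which again uses only the non-eigenvalue property together with Theorem~\ref{t1}.

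For the Fredholm property I would split on the regularity of $\Gamma$. For smooth, simply connected $\Gamma$, Theorems~\ref{thT1} and \ref{thT2} give that $\SS$ and $\TT$ are Fredholm of index zero once one checks that the chosen $(a,b)$ together with hypothesis~(iii) meet the non-degeneracy conditions there; for $a=1$ the relevant scalar factors $b\kappa_e\pm2a$ and $b\pm2a\kappa_e$ have nonzero real part for the chosen $b$, and hypothesis~(iii) supplies the two material factors. The duality remark then transfers index zero to $\SS'$ and $\TT'$. For merely Lipschitz $\Gamma$ one is in case~(iv)--(v), and the G\r{a}rding inequalities of Theorem~\ref{ThLip} give Fredholmness of index zero for $\SS$ and $\TT$, transferred to $\SS'$ and $\TT'$ once more by duality. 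Combining with the injectivity above, each of the four operators is bijective with bounded inverse on $\HH_{\times}^{-\frac{1}{2}}(\Div_{\Gamma},\Gamma)$. Finally, for a given $\EE^{inc}\in\HH_{\loc}(\Rot,\R^3)$ the right-hand sides $f$, $g$, $h$ lie in $\HH_{\times}^{-\frac{1}{2}}(\Div_{\Gamma},\Gamma)$, so each of \eqref{S}, \eqref{T}, \eqref{St}, \eqref{Tt} has a unique solution, and Theorems~\ref{solpt} and \ref{solpt2} (the latter invoking the non-eigenvalue property one last time) guarantee that the associated layer representations reproduce the unique solution of the dielectric scattering problem.

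The main obstacle is bookkeeping rather than a new idea: one must confirm that a \emph{single} choice of $(a,b)$ simultaneously satisfies the sign conditions of Lemma~\ref{lem} that exclude real eigenvalues and the non-degeneracy (respectively G\r{a}rding) conditions of Theorems~\ref{thT1}--\ref{thT2} (respectively \ref{ThLip}). The delicate point is the dissipative regime $\Im(\kappa_e)>0$, where one must track how $\mathrm{sign}(\Im(\kappa_e^2))$ enters the definition of $b$ so as to force $\Im(\kappa_e^2)\,\Im(\kappa_e a/b)>0$, and separately verify that the degenerate direction $\Im(\kappa_e^2)=0$ (purely imaginary $\kappa_e$) is excluded or treated directly through the real part of the Green identity, where the condition $\Re(\kappa_e^2)\ne0$ from hypothesis~(i) becomes essential.
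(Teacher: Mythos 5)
Your proposal is correct and is essentially the paper's own proof: the paper states Theorem~\ref{thT} only as a summary (``we summarize all the previous results''), and the intended assembly --- Lemma~\ref{lem} to exclude eigenvalues of \eqref{intp} for the prescribed $(a,b)$, Theorem~\ref{vp} and its Section~\ref{IE2} counterpart together with Theorem~\ref{t1} for injectivity, Theorems~\ref{thT1}, \ref{thT2}, \ref{ThLip} plus the duality remark for Fredholmness of index zero (hence invertibility), and Theorems~\ref{solpt}, \ref{solpt2} to recover the scattering solution --- is exactly what you carry out, with the correct sign computations $\Im(a/b)=-1/\eta\ne0$ and $\Im(\kappa_{e}^2)\,\Im(\kappa_{e}a/b)=|\Im(\kappa_{e}^2)|/\eta>0$. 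One small correction to your bookkeeping: the hypothesis $\Re(\kappa_{e}^2)\ne0$ enters not through the real part of the Green identity but through the non-degeneracy factors of Theorems~\ref{thT1}--\ref{thT2}, since for $b=-i\eta\kappa_{e}\,\mathrm{sign}(\Im(\kappa_{e}^2))$ one has $b\kappa_{e}\pm2a=\mp i\eta\,\mathrm{sign}(\Im(\kappa_{e}^2))\kappa_{e}^{2}\pm2$ vanishing precisely when $\kappa_{e}^{2}$ is purely imaginary (while $b\pm2a\kappa_{e}$ never vanish), and the purely imaginary $\kappa_{e}$ case you flag is indeed a soft spot of the paper itself, as Lemma~\ref{lem} does not cover $\kappa_{e}\in\C\setminus\R$ with $\Im(\kappa_{e}^2)=0$.
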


 \section{Discussion}
 In this paper we have described and analyzed modified boundary integral equations to 
solve a radiation problem for the Maxwell system that are stable for all wave numbers. Generalizing the approach of Kleinman and Martin to the Maxwell system by employing a suitable regularizing operator introduced by Steinbach and Windisch,
in Section~\ref{IE1} we have derived two boundary integral equations using an ansatz for the exterior field and in Section~\ref{IE2} we have derived two integral equations using an ansatz for the interior field. Note that if it is only the exterior field that is of interest, one can choose  an integral equation which gives a simple representation for $\EE^{s}$, e.g. \eqref{S} or \eqref{T}. This choice was used in the PhD thesis \cite{FLL} for an application to a shape optimization problem involving the far field pattern \cite{FLL3}.  For numerical results using this method, we refer to \cite{FLL}.

In \cite{MartinOla} P. A. Martin and P. Ola established the existence and the uniqueness of the solution to an integral equation analogous to \eqref{S} for all real values of the exterior wave number by adapting a regularization method that was introduced by Kress \cite{Col} in the framework of  spaces of continuous functions, namely by using the operator  $\jj\mapsto\nn\times V_{0}^2\jj$ in the place of our $C_{0}^{*}$. This technique  would not yield four families of Fredholm boundary integral operators  of index zero in $\HH_{\times}^{-\frac{1}{2}}(\Div_{\Gamma},\Gamma)$ since  the invertibility of the regularizing operator is needed in our arguments (see the proof of Theorem \ref{thT1}). A more interesting advantage of the operator $C_{0}^*$ is the  possibility to use our regularization method on Lipschitz boundaries since this operator still is elliptic. 

Numerical analysis using similar CFIE-based methods for the scattering of homogeneous penetrable objects are presented in \cite{ValdesMichielssen}. 
The proposed integral equation also contains double and triple operator products. Stable discretization of these operator products can be obtained by multiplying matrices arising from the discretization of the various operators using specific basis and testing functions \cite{AndriulliMichielssen}. As is shown there, preconditioners from the same class of operators are also easily constructed. The conclusion is that appropriately preconditioned CFIE-based single-source formulations are more efficient than the coupled integral equations.

 \small

\end{document}